\begin{document}

\sloppy

\title{The nonexistence of sections of Stiefel varieties and stably free modules}

\author{Sebastian Gant}
\address{Department of Mathematics, the University of British Columbia\\
  1984 Mathematics Rd \\
  Vancouver BC V6T 1Z2\\
Canada.}
\email[W.~S.~Gant]{wsgant@math.ubc.ca}
\keywords{Stably free modules, Stiefel varieties, $K$-theory, motivic cohomology}
\subjclass[2020]{14F42, 19A13, 13C10}
\thanks{We acknowledge the support of the Natural Sciences and Engineering Research Council of Canada (NSERC), RGPIN-2021-02603.}

\begin{abstract}
  Let $V_r(\A^n)$ denote the Stiefel variety $\GL_n/\GL_{n-r}$ over a field. There is a natural projection $p: V_{r+\ell}(\A^n) \to V_r(\A^n)$. The question of whether this projection admits a section was asked by M. Raynaud in 1968. We focus on the case of $r \ge 2$ and provide examples of triples $(r,n,\ell)$ for which a section does not exist. Our results produce examples of stably free modules that do not have free summands of a given rank. To this end, we also construct a splitting of $V_2(\A^n)$ in the motivic stable homotopy category over a field, analogous to the classical stable splitting of the Stiefel manifolds due to I. M.~James.
\end{abstract}

\maketitle

\section{Introduction}

Let $\GL_n$ denote the general linear group scheme over $\Z$. Given a nonnegative integer $r$ with $r \le n$, we let $V_r(\A^n_\Z)$ denote the \emph{Stiefel scheme}: the homogeneous space $\GL_n/\GL_{n-r}$, where $\GL_{n-r}$ is embedded in $\GL_n$ via the inclusion:
\[
  A \mapsto 
  \begin{bmatrix}
    A & \\
    & I_{n-r}
  \end{bmatrix}.
\]
If $\ell$ is another nonnegative integer such that $r+\ell \le n$, there is a canonical projection $p_{r+\ell,r}^n \colon V_{r+\ell}(\A^n_\Z) \to V_r(\A^n_\Z)$ which we simply denote by $p$ when the indices are clear from context. M. Raynaud asked the following question in \cite{Raynaud1968}: for which triples $(r,n,\ell)$ does the projection $p\colon V_{r+\ell}(\A^n_\Z) \to V_r(\A^n_\Z)$ admit a section? The case of $r = \ell = 1$ is known: a section of $p\colon V_2(\A^n_\Z) \to V_1(\A^n_\Z)$ exists if and only if $n$ is even. Another known case is that of $r=n-1$ and $\ell=1$, where a section of $p$ exists over $\Z$ for all $n \ge 1$. One way of seeing this is to note that there are isomorphisms of schemes
\[
  V_n(\A^n_\Z) \iso \GL_n, \qquad V_{n-1}(\A^n_\Z) \iso \SL_n
\]
for which a section of $p$ is given by the obvious inclusion $\SL_n \hookrightarrow \GL_n$ (see also \cite[Proposition 2.2]{Raynaud1968}). 

The case of $r=1$ and $\ell > 1$ was examined over fields in \cite{Raynaud1968} and \cite{Gant2025}, where \cite{Raynaud1968} produces examples of pairs $(n,\ell)$ for which a section of $p: V_{1+\ell}(\A^n) \to V_1(\A^n)$ does not exist, and \cite{Gant2025} produces examples of pairs $(n, \ell)$ for which a section exists. Here and throughout, we denote by $V_r(\A^n)$ the base change of $V_r(\A^n_\Z)$ to a specified base field $k$. 

In this paper, we focus on the case of $r \ge 2$ over a field. To the author's knowledge, the only known answer to the above question when $r\ge 2$ (before the writing of this paper) is that of $r=n-1$ and $\ell=1$ mentioned above. We prove the following, which accounts for almost all the remaining cases.

\begin{IntroTheorem}[cf. Theorems \ref{thm:l>1} and \ref{thm:l=1}]\label{thm:Intro}
  Let $k$ be a field and $r,n,\ell$ be nonnegative integers with $r+\ell \le n$ and $r \ge 2$. There does not exist a section of $p: V_{r+\ell}(\A^n) \to V_r(\A^n)$ over $k$ in the following cases:
  \begin{enumerate}
    \item $\ell \ge 2$;
    \item $\ell=1$, $r \le n-2$, and $n-r \not\equiv 1 \pmod{24}$. 
  \end{enumerate}
\end{IntroTheorem}

In particular, there does not exist a section of $p$ over $\Z$ in the cases of Theorem \ref{thm:Intro}. We remark that, when $k$ admits a complex embedding, a short proof of Theorem \ref{thm:Intro} can be obtained using complex realization and the results of \cite{Suter1970}. We also expect that a section does not exist in the case $\ell=1$, $r \le n-2$, and $n-r \equiv 1 \pmod{24}$; indeed, we can say that a section does not exist over fields of characteristic 0 in this case by comparison to \cite{Suter1970}. 

As laid out in \cite{Raynaud1968}, the existence (or nonexistence) of a section of $p$ has consequences for the theory of stably free modules, which we now describe. To fix some terminology, let $R$ be a commutative ring and $r,n$ be nonnegative integers with $r \le n$. An $R$-module $P$ is \emph{stably free of type $(n,n-r)$} if there is an isomorphism of $R$-modules
\[
  P \oplus R^{r} \iso R^n.
\]
Over a fixed field $k$, there is a $k$-algebra $A_{n,n-r}$ and a stably free module $P_{n,n-r}$ of type $(n,n-r)$ over $A_{n,n-r}$ with the property that the projection $V_{r+\ell}(\A^n) \to V_r(\A^n)$ has a section over $k$ if and only if $P_{n,n-r}$ has a free summand of rank $\ell$ \cite[Proposition 2.4]{Raynaud1968}. In light of this observation, Theorem \ref{thm:Intro} produces examples of stably free modules of type $(n,n-r)$ that do not have free summands of a fixed rank $\ell$; we refer to Theorem \ref{thm:Main} for the precise module-theoretic statement. 

The techniques we use to prove Theorem \ref{thm:Intro} rely on the $\A^1$-homotopy theory of \cite{Morel1999}. Most of our methods are motivic analogues of the homotopy-theoretic methods in \cite{Suter1970}, which addresses the nonexistence of sections of the projections of the complex Stiefel manifolds
\[
  \U(n)/\U(n-r-\ell) \to \U(n)/\U(n-r)
\]
when $r \ge 2$. In this vein, we examine a certain map of spaces
\[
  f_r^n: \Sigma^{1,1} \tilde{\P}^{n-1}_{n-r} \to V_r(\A^n),
\]
originally constructed in \cite{Williams2012}, where the space $\Sigma^{1,1} \tilde{\P}^{n-1}_{n-r}$ is (equivalent to) the $\G_m$-suspension of the so-called \emph{truncated projective space} $\P^{n-1}/\P^{n-r-1}$. 

Our technique is first to compute the $\A^1$-connectivity of the map $f_r^n$ (Proposition \ref{prop:frnConnectivity}) using a motivic version of the Blakers--Massey Theorem (Proposition \ref{prop:MotB-M}). The connectivity calculation allows us to show that if a section of $p$ exists, then there is a retract in $K$-theory of a certain map between truncated projective spaces. The Adams operations then obstruct the existence of such a retract (see Lemmas \ref{lem:NoRetract} and \ref{lem:NoRetract2}). 

In proving Theorem \ref{thm:Intro}, we also establish a stable splitting of the Stiefel variety $V_2(\A^n)$, which is perhaps of independent interest. 

\begin{IntroTheorem}[cf. Proposition \ref{prop:r=2Split}]\label{thm:Intro2}
  Suppose $k$ is a field. The map $f_2^n: \Sigma^{1,1} \tilde{\P}^{n-1}_{n-2} \to V_2(\A^n)$ has a stable retract, inducing a splitting
  \[
    \Sigma_{\P^1}^\infty V_2(\A^n) \homeq \Sigma_{\P^1}^\infty \Sigma^{1,1}\tilde{\P}^{n-1}_{n-2} \vee S^{4n-4,2n-1}
  \]
  in the motivic stable homotopy category $\cat{SH}(k)$.
\end{IntroTheorem}
Classical analogues and generalizations of this result can be found in \cite{James1959} and \cite{Miller1985}.

\subsection{Overview}

In \Cref{sec:Setup,sec:Stiefel,sec:ProjSpaces}, we establish some preliminary lemmas including a slight improvement to motivic versions of the Blakers--Massey in the literature (see \cite[Theorem 4.2.1]{Asok2016}, \cite[Proposition 2.21]{Wickelgren2019}, \cite[Proposition 3.3]{Asok2024a}, and \cite[Theorem 2.3.8]{Strunk2012}). \Cref{sec:frn} provides a calculation of the $\A^1$-connectivity of the comparison map $f_r^n: \Sigma^{1,1} \tilde{\P}^{n-1}_{n-r} \to V_r(\A^n)$. In \Cref{sec:Splitting}, we prove the stable splitting of Theorem \ref{thm:Intro2}. The $K$-theoretic obstructions to sections between maps of truncated projective spaces are discussed in \Cref{sec:KT}. Lastly, in \Cref{sec:Sections}, we prove Theorem \ref{thm:Intro} and rephrase our results in terms of stably free modules.

\subsection{Acknowledgements}

The author would like to thank Ben Williams and Oliver R\"ondigs for helpful discussions, particularly about material in Appendix \ref{sec:ProofOfInc}, as well as the anonymous referee, whose comments strengthened many of the results of this paper.

\section{Setup}\label{sec:Setup}

Throughout, $k$ denotes a base field, and we denote by $\Sm_k$ the category of smooth, separated, finite-type $k$-schemes. We let $\Pre(\Sm_k)$ denote the $\infty$-category of presheaves of spaces on $\Sm_k$. The full subcategory of $\Pre(\Sm_k)$ spanned by the Nisnevich sheaves of spaces is denoted $\Shv_{\Nis}(\Sm_k)$, and we let $\cat{Spc}(k)$ denote the $\infty$-category of motivic spaces: the full subcategory of $\cat{Shv}_{\Nis}(\Sm_k)$ spanned by the $\A^1$-invariant Nisnevich sheaves. The inclusion $\cat{Spc}(k) \to \cat{Shv}_{\Nis}(\Sm_k)$ is an accessible localization \cite[Section 3.4]{Hoyois2017}, and the associated localization endofunctor
\[
  L_{\A^1}: \cat{Shv}_{\Nis}(\Sm_k) \to \cat{Shv}_{\Nis}(\Sm_k)
\]
is called the \emph{$\A^1$-localization functor}. A map $f: \sh{X} \to \sh{Y}$ in $\Shv_{\Nis}(\Sm_k)$ is an \emph{$\A^1$-equivalence} if $L_{\A^1}(f)$ is an equivalence. We shall only use the symbol ``$\homeq$'' to denote an $\A^1$-equivalence between objects. The $\A^1$-homotopy category, the homotopy category associated with $\Spc(k)$, is denoted $\cat{H}(k)$. The space of maps $\sh{X} \to \sh{Y}$ in $\Shv_{\Nis}(\Sm_k)$ is denoted $\Map_k(\sh{X},\sh{Y})$.

There are pointed variants of the above constructions. We let $\cat{H}(k)_*$ denote the pointed $\A^1$-homotopy category, the homotopy category associated with $\Spc(k)_*$. If $\sh{X},\sh{Y}$ are objects of $\cat{Shv}_{\Nis}(\Sm_k)_*$, we denote the set of maps $L_{\A^1}\sh{X} \to L_{\A^1}\sh{Y}$ in $\cat{H}(k)_*$ by $[\sh{X},\sh{Y}]$.  

Let $\EuScript{E}$ be an $\infty$-topos. For $n \ge -1$, we say that a morphism $f$ in $\EuScript{E}$ is \emph{$n$-connected} if $f$ is $(n+1)$-connective in the sense of \cite[Definition 6.5.1.10]{Lurie2009}. It is convenient to make the convention that any map is $-2$-connected. An object $\sh{X}$ of $\EuScript{E}$ is \emph{$n$-connected} if $\sh{X} \to *$ is $n$-connected. In the $\infty$-topos of spaces, an object $X$ is $n$-connected if and only if $\pi_i(X,x)$ vanishes for all $i \le n$ and all basepoints $x \in X$, and a morphism is $n$-connected if and only if each of its homotopy fibres is $n$-connected. 

Since the $\infty$-topos $\Shv_{\Nis}(\Sm_k)$ has enough points, a morphism in $\Shv_{\Nis}(\Sm_k)$ is $n$-connected if and only if $p^*f$ is an $n$-connected map of spaces for all points $p$ in a conservative family (see Section \ref{sec:Points} for a discussion of points). If $f:\sh{X} \to \sh{Y}$ is a morphism in $\Shv_{\Nis}(\Sm_k)$, we say that $f$ is \emph{$\A^1$-$n$-connected} if $L_{\A^1}f$ is $n$-connected. Similarly, an object $\sh{X}$ is \emph{$\A^1$-$n$-connected} if $L_{\A^1}\sh{X}$ is $n$-connected.  

\begin{remark}
  This numbering for the connectedness of an object is standard, but our numbering for the connectedness of a map differs from that of classical sources. Our numbering appears to be more common in the modern literature (see, for example, \cite[Remark 3.3.5]{Anel2020}).
\end{remark}

If $p\ge q\ge0$, we let
\[
  S^{p,q} := S^{p-q} \Smash (\G_m)^{\Smash q}
\]
where $S^{p-q}$ is the simplicial $p-q$-sphere. If $\sh{X}$ is a pointed object, we let
\[
  \Sigma^{p,q} \sh{X} := S^{p,q} \Smash \sh{X}.
\]

If $f:\sh{X} \to \sh{Y}$ is a pointed map, we use $\fib(f)$ and $\cof(f)$ to denote the fibre and cofibre of $f$ (calculated in  $\cat{Shv}_{\Nis}(\Sm_k)$). Note that if $\sh{Y}$ is connected, then $f$ is $n$-connected if and only if $\fib(f)$ is $n$-connected.

The \emph{$\A^1$-fibre of $f$}, denoted $\fib_{\A^1}(f)$, is the motivic space $\fib(L_{\A^1}f)$. The \emph{$\A^1$-cofibre of $f$}, which we denote by $\cof_{\A^1}(f)$, is the motivic space $L_{\A^1}\cof(L_{\A^1}f)$. Since the functor $L_{\A^1}$, considered as a functor $\cat{Shv}_{\Nis}(\Sm_k) \to \Spc(k)$, preserves cofibre sequences, the $\A^1$-cofibre of $f$ is naturally equivalent to both the cofibre of $L_{\A^1}f$ (calculated in $\Spc(k)$) as well as the motivic space $L_{\A^1}\cof(f)$.

\begin{lemma}\label{lem:SmashConn}
  Suppose $\sh{X},\sh{Y}$ are objects of $\cat{Shv}_{\Nis}(\Sm_k)_*$ which are pulled back from a perfect subfield of $k$. Assume that $\sh{X}$ is $\A^1$-$n$-connected and $\sh{Y}$ is $\A^1$-$m$-connected (with $n,m \ge -1$). Then the smash product $\sh{X} \Smash \sh{Y}$ is $\A^1$-$(n+m+1)$-connected. 
\end{lemma}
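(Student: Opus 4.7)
The plan is to reduce the statement to the classical fact that a smash product of a connected and an $m$-connected simplicial sheaf (checked stalkwise) is $(n+m+1)$-connected, and then invoke Morel's Unstable $\A^1$-Connectivity Theorem.

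First I would replace $\sh{X}$ and $\sh{Y}$ by their $\A^1$-localizations. By definition, $L_{\A^1}\sh{X}$ and $L_{\A^1}\sh{Y}$ are $n$-connected and $m$-connected as Nisnevich sheaves, respectively. The canonical maps $\sh{X} \to L_{\A^1}\sh{X}$ and $\sh{Y} \to L_{\A^1}\sh{Y}$ are $\A^1$-equivalences, and because the smash product on $\cat{Shv}_{\Nis}(\Sm_k)_*$ descends to a symmetric monoidal structure on $\cat{Spc}(k)_*$, the induced map
\[
  \sh{X} \Smash \sh{Y} \longrightarrow L_{\A^1}\sh{X} \Smash L_{\A^1}\sh{Y}
\]
is again an $\A^1$-equivalence. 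Consequently $L_{\A^1}(\sh{X}\Smash\sh{Y}) \homeq L_{\A^1}(L_{\A^1}\sh{X} \Smash L_{\A^1}\sh{Y})$, and it suffices to bound the $\A^1$-connectivity of the right-hand side.

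Next I would argue that $L_{\A^1}\sh{X}\Smash L_{\A^1}\sh{Y}$ is $(n+m+1)$-connected as a Nisnevich sheaf of pointed spaces. Connectivity of a sheaf is detected on stalks at points of $\Sm_k$, and the stalk of a smash product of sheaves is the smash product of stalks. Since each stalk of $L_{\A^1}\sh{X}$ is an $n$-connected pointed space and each stalk of $L_{\A^1}\sh{Y}$ is $m$-connected, the classical connectivity estimate for smash products of pointed spaces yields that every stalk of $L_{\A^1}\sh{X} \Smash L_{\A^1}\sh{Y}$ is $(n+m+1)$-connected.

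Finally I would apply Morel's Unstable $\A^1$-Connectivity Theorem \cite[Theorem 6.38]{Morel2012}: since $k$ is perfect, $L_{\A^1}$ preserves the vanishing of homotopy sheaves in a range, so $L_{\A^1}(L_{\A^1}\sh{X} \Smash L_{\A^1}\sh{Y})$ remains $(n+m+1)$-connected. Combined with the $\A^1$-equivalence above, this shows $\sh{X}\Smash\sh{Y}$ is $\A^1$-$(n+m+1)$-connected. The main delicate point is ensuring that smash products of $\A^1$-equivalences are $\A^1$-equivalences (so that passing to the localized inputs does not lose information), but this is precisely the content of the symmetric monoidal structure on $\cat{Spc}(k)_*$ coming from Nisnevich-local $\A^1$-invariant completion.
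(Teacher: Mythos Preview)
Your argument is correct and matches the paper's approach: the paper does not spell out a proof but simply says the lemma ``is a straightforward application of Morel's Unstable $\A^1$-Connectivity Theorem \cite[Theorem 6.38]{Morel2012},'' which is precisely the reduction you carry out (replace by $\A^1$-local models, use the stalkwise classical smash-connectivity estimate, then invoke Morel's theorem).
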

\begin{proof}
  This follows from Morel's Unstable $\A^1$-Connectivity Theorem \cite[Theorem 6.38]{Morel2012} and Lemma \ref{lem:BaseChange}.
\end{proof}

We will also use the following lifting lemma.

\begin{lemma}\label{lem:CellApprox}
  Let $n \geq 1$ be an integer. Suppose $f:\sh{Y} \to \sh{Z}$ is an $n$-connected map of pointed, simply connected motivic spaces over a field $k$. If $g: \sh{X} \to \sh{Z}$ is a map of pointed motivic spaces such that $\sh{X}$ has Nisnevich cohomological dimension at most $n+1$, then there is a map $\tilde{g}: \sh{X} \to \sh{Y}$ making the diagram
  \[
    \begin{tikzcd}
      & \sh{Y} \dar["f"] \\
      \sh{X} \ar[ur,"\tilde{g}"] \rar["g"] & \sh{Z}
    \end{tikzcd}
  \]
  commute. If, moreover, $\sh{X}$ has Nisnevich cohomological dimension at most $n$, then the lift $\tilde{g}$ is unique (up to homotopy).
\end{lemma}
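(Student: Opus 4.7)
The plan is to apply standard Nisnevich obstruction theory to the Moore--Postnikov tower of $f$. Since $n \ge 1$ and $\sh{Y}, \sh{Z}$ are pointed and simply connected, the long exact sequence of a fibration shows that $F := \fib(f)$ is pointed, simply connected, and $n$-connected, so every Nisnevich homotopy sheaf $\bpi_i(F)$ is a sheaf of abelian groups for $i \ge 2$ and vanishes for $i \le n$. Standard constructions in $\cat{Shv}_{\Nis}(\Sm_k)_*$ then furnish a Moore--Postnikov factorization
\[
    \sh{Y} \simeq \varprojlim_{i \ge n} \sh{Y}_i, \qquad \sh{Y}_n = \sh{Z},
\]
in which each stage $\sh{Y}_{i+1} \to \sh{Y}_i$ is a principal fibration classified by a $k$-invariant $k_{i+1}: \sh{Y}_i \to K(\bpi_{i+1}(F), i+2)$.

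To produce $\tilde{g}$, I would lift $g$ through this tower inductively. Starting from $g_n := g : \sh{X} \to \sh{Z}$, the obstruction to promoting a given lift $g_i : \sh{X} \to \sh{Y}_i$ to the next stage is the class $k_{i+1} \circ g_i \in H^{i+2}_{\Nis}(\sh{X}, \bpi_{i+1}(F))$. Since $i \ge n$ gives $i + 2 \ge n+2 > n+1$, the hypothesis that the Nisnevich cohomological dimension of $\sh{X}$ is at most $n+1$ forces every obstruction to vanish. The resulting compatible system assembles to a map $\tilde{g} : \sh{X} \to \sh{Y}$ with $f \tilde{g} \simeq g$.

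The uniqueness statement follows by playing the same game with a homotopy. Given two lifts $\tilde{g}, \tilde{g}'$ of $g$, I would inductively construct a homotopy between them through the tower: at stage $i$, the set of homotopies extending one already chosen at stage $i-1$ is either empty or a torsor under $H^{i+1}_{\Nis}(\sh{X}, \bpi_{i+1}(F))$, and these groups vanish for $i \ge n$ when the Nisnevich cohomological dimension of $\sh{X}$ is at most $n$. Hence the inductive construction never gets stuck and the limit exhibits $\tilde{g} \simeq \tilde{g}'$.

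The principal technical point is not the obstruction-theoretic bookkeeping (which is formal) but the convergence of the tower on mapping spaces out of $\sh{X}$. The existence of the tower is standard given simple connectivity, and convergence of $[\sh{X}, -]$ along the tower reduces via a Milnor-type sequence to the vanishing of $\varprojlim$ and $\varprojlim^1$ terms of the relevant cohomology groups; both vanish in the range of interest by the cohomological dimension hypothesis, so the inductive lifts collated above genuinely assemble to morphisms out of $\sh{X}$ in the homotopy category.
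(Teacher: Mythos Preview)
Your proposal is correct and follows essentially the same approach as the paper: Moore--Postnikov factorization of $f$ combined with obstruction theory in Nisnevich cohomology. The paper's proof is a single sentence citing \cite[Theorem 6.1.1]{Asok2015} for the Moore--Postnikov factorization in $\A^1$-homotopy theory; you have simply spelled out the obstruction-theoretic details and the convergence argument explicitly.
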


\begin{proof}
  This is a straightforward application of the Moore--Postnikov factorization in $\A^1$-homotopy theory \cite[Theorem 6.1.1]{Asok2015}. 
\end{proof}

\subsection{A Motivic Blakers--Massey Theorem}

Heuristically, the classical Blakers--Massey Theorem asserts that, given a map of pointed spaces $f: X \to Y$, the space $\Omega \cof(f)$ is an approximation to $\fib(f)$ in a range of homotopy groups that depends on the connectivity of $X$ and $f$ (see Corollary \ref{cor:B-M2} for a precise statement).

Motivic versions of the Blakers--Massey Theorem appear in the literature as \cite[Theorem 4.2.1]{Asok2016}, \cite[Proposition 2.21]{Wickelgren2019}, \cite[Proposition 3.3]{Asok2024a}, and \cite[Theorem 2.3.8]{Strunk2012}, often under an $\A^1$-simple-connectivity hypothesis. Over a perfect field, we establish a motivic version of the Blakers--Massey Theorem (Proposition \ref{prop:MotB-M}) without this hypothesis. The key to this slight improvement is an observation of \cite{Asok2024} concerning strongly $\A^1$-invariant sheaves. The rest of the argument follows that of \cite[Theorem 4.2.1]{Asok2016} closely by appealing to results of \cite[Chapter 6]{Morel2012} concerning the $\A^1$-localization of fibre sequences. 

The following result, a Blakers--Massey theorem for $\infty$-toposes, can be found in \cite{Anel2020}. 

\begin{proposition}[{\cite[Corollary 4.3.1]{Anel2020}}]\label{prop:B-M1}
  Let $n,m \geq -1$ be integers. If 
  \[
    \begin{tikzcd}
      \sh{X} \dar["f"] \ar[dr,phantom,very near end, "\ulcorner"] \rar["g"] & \sh{Z} \dar \\
      \sh{Y} \rar & \sh{W}
    \end{tikzcd}
  \]
  is a pushout square in an $\infty$-topos such that $f$ is $n$-connected and $g$ is $m$-connected, then the natural map 
  \[
    \sh{X} \to \sh{Y} \times_{\sh{W}} \sh{Z}
  \]
  is $(n+m)$-connected.
\end{proposition}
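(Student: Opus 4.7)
The plan is to deploy the generalized Blakers--Massey theorem of Anel, Biedermann, Finster, and Joyal, which works in any $\infty$-topos. Their framework is organized around \emph{modalities} (certain reflective orthogonal factorization systems): given two modalities with left classes $\mathcal{L}_1$ and $\mathcal{L}_2$ and a pushout square whose two maps out of the initial corner lie in $\mathcal{L}_1$ and $\mathcal{L}_2$ respectively, the canonical comparison map from the initial corner to the pullback of the remaining two edges is controlled by a \emph{pushout-product} of $\mathcal{L}_1$ and $\mathcal{L}_2$.

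First, I would verify that for every integer $k \ge -1$, the pair consisting of $k$-connected maps and $k$-truncated maps forms a modality in any $\infty$-topos. The factorization itself is classical: $k$-truncated objects are reflective, and applying fibrewise $k$-truncation factors any morphism as a $k$-connected map followed by a $k$-truncated one. The modality conditions then reduce to showing the left class is closed under base change, which holds because $k$-connectivity of a map is detected on its fibers and colimits in an $\infty$-topos are universal.

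Second, I would bound the pushout-product of the $n$-connected and $m$-connected modalities. The essential geometric input is that the join of an $n$-connected object with an $m$-connected object is $(n+m+1)$-connected, equivalently that the smash product of an $n$-connected and an $m$-connected pointed object is $(n+m+1)$-connected. Tracing this connectivity shift through the pushout-product definition gives that the pushout-product modality is contained in the class of $(n+m)$-connected maps. Plugging this into the generalized Blakers--Massey theorem for our square yields exactly the stated $(n+m)$-connectivity of $X \to Y \times_W Z$.

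The main obstacle is the pushout-product bookkeeping: verifying the modality axioms and then carefully tracking how the connectivity shift plays out in the pushout-product construction across the square. The essential homotopy-theoretic content, by contrast, is just the additive behaviour of connectivity under smash/join, which is formal in any $\infty$-topos with universal colimits and hence requires no input specific to the topos of interest.
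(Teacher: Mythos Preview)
The paper does not give its own proof of this proposition: it is stated as a citation of \cite[Corollary 4.3.1]{Anel2020} and used as a black box. Your proposal correctly identifies the source and accurately sketches the argument of Anel--Biedermann--Finster--Joyal (modalities from $k$-connected/$k$-truncated factorizations, the pushout-product estimate via join/smash connectivity, and specialization of their generalized Blakers--Massey theorem), so it is consistent with, and indeed more detailed than, what the paper provides.
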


The following important special case of Proposition \ref{prop:B-M1}, when $\sh{Z} = *$, is also called ``the Blakers--Massey Theorem'' in the literature.

\begin{corollary}\label{cor:B-M2}
  Let $n,m \geq -1$ be integers. Suppose $f:\sh{X} \to \sh{Y}$ is a map of pointed objects in an $\infty$-topos such that $\sh{X}$ is $n$-connected and $f$ is $m$-connected, then the natural map
  \[ \fib(f) \to \Omega \cof(f) \]
  is $(n+m)$-connected. 
\end{corollary}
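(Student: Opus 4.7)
The plan is to derive this as a special case of \Cref{prop:B-M1} by taking $Z = *$. Since $X$ is $n$-connected, the map $X \to *$ has $n$-connected fibre (namely $X$ itself), and the pushout $Y \sqcup_X *$ is, by definition, $\cof(f)$. Applying \Cref{prop:B-M1} to the square
\[
  \begin{tikzcd}
    X \dar["f"] \rar & {*} \dar \\
    Y \rar["\pi"] & \cof(f)
  \end{tikzcd}
\]
therefore yields that the natural map $X \to Y \times_{\cof(f)} {*}$ is $(n+m)$-connected.

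The second step is to identify $Y \times_{\cof(f)} {*}$ with the fibre $\fib(\pi)$ of the cofibre map $\pi: Y \to \cof(f)$, and to compare the fibre sequence $\fib(f) \to X \to Y$ with the extended fibre sequence $\Omega \cof(f) \to \fib(\pi) \to Y$. These fit into a commutative square
\[
  \begin{tikzcd}
    \fib(f) \rar \dar & X \dar \\
    \Omega \cof(f) \rar & \fib(\pi)
  \end{tikzcd}
\]
in which the right vertical is the Blakers--Massey comparison map, the left vertical is the natural map from the statement, and the horizontal maps are the inclusions of fibres over the common base $Y$.

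To conclude, I would invoke the standard fact that for a map of fibre sequences with identity map on the base, the induced map on fibres has the same connectivity as the map on total spaces (which follows from the five lemma applied to the long exact sequences of homotopy sheaves). Combined with the $(n+m)$-connectivity of the right vertical established in the first step, this yields the desired connectivity of the left vertical. The proof is essentially formal given \Cref{prop:B-M1}; the only mild subtlety is the identification of the Blakers--Massey comparison map with the natural map $\fib(f) \to \Omega \cof(f)$ appearing in the statement, which is a routine unwinding of definitions.
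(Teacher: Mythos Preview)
Your proof is correct and follows the same approach as the paper, which simply notes that this corollary is the special case $Z = *$ of \Cref{prop:B-M1}. You provide more detail than the paper in unwinding how the comparison map $X \to Y \times_{\cof(f)} *$ induces the map $\fib(f) \to \Omega\cof(f)$ on fibres over $Y$, but the core idea is identical.
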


We may now prove a motivic version of Corollary \ref{cor:B-M2} in the case that the base field is perfect.

\begin{proposition}[Motivic Blakers--Massey]\label{prop:MotB-M}
  Let $k$ be a perfect field and $n,m \ge -1$ be integers with $n+m \ge 0$. Suppose $f:\sh{X} \to \sh{Y}$ is a map of pointed Nisnevich sheaves such that $\sh{X}$ is $\A^1$-$n$-connected and $f$ is $\A^1$-$m$-connected. Then the natural map 
  \[
    \fib_{\A^1}(f) \to \Omega \cof_{\A^1}(f)
  \]
  is $(n+m)$-connected.
\end{proposition}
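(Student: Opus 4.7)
The plan is to apply the $\infty$-topos Blakers--Massey theorem (\Cref{cor:B-M2}) in $\cat{Shv}_{\Nis}(\Sm_k)$ to the $\A^1$-localized map $g := L_{\A^1}f$ and then transfer the resulting Nisnevich-connectivity statement into the desired $\A^1$-connectivity statement using Morel's theory.

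First, I would translate the hypotheses into Nisnevich-theoretic statements about $g$. By the definition of $\A^1$-connectivity, $L_{\A^1}\sh{X}$ is Nisnevich $n$-connected, and since $\fib(g)=\fib_{\A^1}(f)$ by definition, the hypothesis that $f$ is $\A^1$-$m$-connected says precisely that $g$ is a Nisnevich $m$-connected map between pointed Nisnevich sheaves. Invoking \Cref{cor:B-M2} in the $\infty$-topos $\cat{Shv}_{\Nis}(\Sm_k)$ then yields that the canonical comparison
\[
  \fib(g) \longrightarrow \Omega\cof(g)
\]
is Nisnevich $(n+m)$-connected, where $\cof(g)$ is the Nisnevich cofibre.

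The remaining task is to compare $\Omega\cof(g)$ with $\Omega\cof_{\A^1}(f)=\Omega L_{\A^1}\cof(g)$. The unit $\cof(g)\to L_{\A^1}\cof(g)$ is an $\A^1$-equivalence, and Morel's unstable $\A^1$-connectivity theorem \cite[Theorem 6.38]{Morel2012} ensures that $L_{\A^1}$ preserves Nisnevich connectivity over a perfect field. I would then argue, following \cite[Theorem 4.2.1]{Asok2016} and using the results of \cite[Chapter 6]{Morel2012} on the $\A^1$-localization of fibre sequences, that $\Omega\cof(g)\to\Omega L_{\A^1}\cof(g)$ is itself at least $(n+m)$-connected on Nisnevich homotopy sheaves, so that the composite $\fib_{\A^1}(f)\to\Omega\cof_{\A^1}(f)$ inherits the $(n+m)$-connectivity supplied by the $\infty$-topos Blakers--Massey theorem.

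The main obstacle is precisely this last comparison in the absence of simple connectivity: the earlier arguments of \cite{Asok2016,Wickelgren2019} avoided it by assuming $\cof(g)$ was $\A^1$-simply connected, so that $\Omega$ commuted well with $\A^1$-localization through Morel's theory of strongly $\A^1$-invariant sheaves. To remove this hypothesis I would invoke the observation of \cite{Asok2024} that, over a perfect field, the $\A^1$-fundamental sheaf of groups of any pointed Nisnevich sheaf is automatically strongly $\A^1$-invariant. This brings $\bpi_1^{\A^1}$ within the reach of Morel's machinery and allows the Postnikov-tower comparison between $\Omega\cof(g)$ and $\Omega\cof_{\A^1}(f)$ to be carried out degree by degree without the simple-connectivity assumption, completing the proof.
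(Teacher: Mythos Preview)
Your overall strategy matches the paper's: apply the $\infty$-topos Blakers--Massey theorem to $g=L_{\A^1}f$ in $\cat{Shv}_{\Nis}(\Sm_k)$, then bridge from $\Omega\cof(g)$ to $\Omega L_{\A^1}\cof(g)$ using Morel's machinery together with an input from \cite{Asok2024}. However, you have misidentified that input, and as you have stated it the fix does not close the gap.

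You say the observation of \cite{Asok2024} is that ``the $\A^1$-fundamental sheaf of groups of any pointed Nisnevich sheaf is automatically strongly $\A^1$-invariant.'' But that statement is already Morel's theorem \cite[Theorem~6.1]{Morel2012}, and it does not apply here: the space $\Omega\cof(g)$ is \emph{not} $\A^1$-local (the cofibre is taken in $\cat{Shv}_{\Nis}$), so $\bpi_1(\Omega\cof(g))$ is not the $\A^1$-fundamental sheaf of anything and there is no a priori reason for it to be strongly $\A^1$-invariant. The lemma actually used from \cite{Asok2024} is different: over a perfect field, any sheaf of groups receiving an \emph{epimorphism} from a strongly $\A^1$-invariant sheaf is itself strongly $\A^1$-invariant. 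This is where the hypothesis $n+m\ge 0$ enters: the Blakers--Massey step then yields a surjection $\bpi_1(\fib_{\A^1}(f))\twoheadrightarrow\bpi_1(\Omega\cof(g))$, and the source is strongly $\A^1$-invariant by Morel, hence so is the target. Only with this in hand can one invoke \cite[Theorem~6.46]{Morel2012} to conclude that $L_{\A^1}\Omega\cof(g)\to\Omega L_{\A^1}\cof(g)$ is an equivalence, and \cite[Theorem~6.56]{Morel2012} to transport the $(n+m)$-connectivity across $L_{\A^1}$. Your ``Postnikov-tower comparison degree by degree'' is too vague to stand in for this specific mechanism.
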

\begin{proof} 
  Proposition \ref{prop:B-M1} applied to the map $L_{\A^1}f$ (in the $\infty$-topos $\cat{Shv}_{\Nis}(\Sm_k)$) asserts that the natural map $\fib_{\A^1}(f) \to \Omega \cof(L_{\A^1}f)$ is $(m+n)$-connected. In particular, since $m+n \ge 0$, the induced map
  \[
    \bpi_1(\fib_{\A^1}(f)) \to \bpi_1(\Omega \cof(L_{\A^1}f))
  \]
  is an epimorphism. The sheaf of groups $\bpi_1(\fib_{\A^1}(f))$ is strongly $\A^1$-invariant by \cite[Theorem 6.1]{Morel2012}. Note also that $\bpi_1(\Omega \cof(L_{\A^1}f)) \iso \bpi_0(\Omega^2 \cof(L_{\A^1}f))$, and $\Omega^2 \cof(L_{\A^1}f)$ is an $h$-space---a grouplike monoid in the homotopy category. We may apply \cite[Lemma 2.1.11]{Asok2024} to deduce that $\bpi_1(\Omega \cof(L_{\A^1}f))$ is strongly $\A^1$-invariant itself (see also \cite[Proposition 2.8]{Asok2024a} which says that, over a perfect field $k$, the notions of strong $\A^1$-invariance and very strong $\A^1$-invariance agree). Using \cite[Theorem 6.56]{Morel2012}, we conclude that the induced map of $\A^1$-localizations
  \[ \fib_{\A^1}(f) \to L_{\A^1}(\Omega \cof(L_{\A^1}f)) \]
  is $(m+n)$-connected. Since $\bpi_1(\Omega \cof(L_{\A^1}f))$ is strongly $\A^1$-invariant, we may apply \cite[Theorem 6.46]{Morel2012} to conclude that the natural map 
  \[ L_{\A^1} \Omega \cof(L_{\A^1}f) \to \Omega \cof_{\A^1}(f) \]
  is an equivalence, establishing the proposition. 
\end{proof}

As an application, we prove the following lemma.
\begin{lemma}\label{lem:B-MApp1}
  Let $k$ be a perfect field and $n>0$ an integer. Suppose $f:\sh{X} \to \sh{Y}$ is a map of pointed Nisnevich sheaves such that $\sh{X}$ is $\A^1$-simply connected, $\sh{Y}$ is $\A^1$-connected, and $\cof_{\A^1}(f)$ is $n$-connected. Then $f$ is $\A^1$-$n-1$-connected. 
\end{lemma}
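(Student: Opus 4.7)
The plan is to bootstrap the $\A^1$-connectivity of $f$ via iterated applications of the Motivic Blakers--Massey Theorem (\Cref{prop:MotB-M}), taking advantage of the fixed $\A^1$-$1$-connectivity of $\sh{X}$ as input at every step.

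For the base case, I would observe that $f$ is $\A^1$-$(-1)$-connected: since both $\sh{X}$ and $\sh{Y}$ are pointed and $\A^1$-connected, the basepoint of $\sh{X}$ maps to the basepoint of $\sh{Y}$, so $\fib_{\A^1}(f)$ is nonempty.

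The inductive step carries the weight. Suppose $f$ is already known to be $\A^1$-$m$-connected for some $-1 \le m \le n - 2$. Applying \Cref{prop:MotB-M} with $\sh{X}$ taken as $\A^1$-$1$-connected (since $\sh{X}$ is $\A^1$-simply connected) and $f$ taken as $\A^1$-$m$-connected (the sum $1 + m \ge 0$ is satisfied throughout the range), I obtain that the natural comparison map
\[
  \fib_{\A^1}(f) \to \Omega \cof_{\A^1}(f)
\]
is $(1 + m)$-connected. Since $\cof_{\A^1}(f)$ is $n$-connected, the loop space $\Omega \cof_{\A^1}(f)$ is $(n - 1)$-connected; the long exact sequence of homotopy sheaves associated to this comparison map then forces $\fib_{\A^1}(f)$ to be $\min(1 + m,\, n - 1)$-connected, which equals $1 + m$ in our range. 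Hence $f$ is $\A^1$-$(1 + m)$-connected.

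Iterating the step, starting from $m = -1$ and incrementing by $1$ each time until $m = n - 1$ after $n$ applications, yields the desired $\A^1$-$(n - 1)$-connectivity of $f$. I do not foresee any substantive obstacle here: the base case is immediate, the hypothesis $1 + m \ge 0$ of \Cref{prop:MotB-M} holds at every step of the induction, and the recursion terminates in finitely many steps. The only mildly delicate point is the verification that a $k$-connected map into a $(k')$-connected target has $\min(k, k')$-connected source, but this is a formal long-exact-sequence argument in $\cat{Shv}_{\Nis}(\Sm_k)_*$.
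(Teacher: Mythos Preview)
Your proposal is correct and follows essentially the same approach as the paper: bootstrap the $\A^1$-connectivity of $f$ by repeated application of \Cref{prop:MotB-M}, using the fixed $\A^1$-$1$-connectivity of $\sh{X}$ at each step. The paper's proof is slightly terser (it leaves the base case $m=-1$ implicit and phrases the induction as repeatedly improving the connectivity of the comparison map $\fib_{\A^1}(f)\to\Omega\cof_{\A^1}(f)$), but the substance is identical.
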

\begin{proof}
  It suffices to show that $\fib_{\A^1}(f)$ is $n-1$-connected. Proposition \ref{prop:MotB-M} applies to the map $f$, and we deduce that the natural map
  \[
    \fib_{\A^1}(f) \to \Omega \cof_{\A^1}(f)
  \]
  is connected. In particular, the induced map
  \[
    \bpi_0(\fib_{\A^1}(f)) \to \bpi_1(\cof_{\A^1}(f))=0
  \]
  is an isomorphism. If $n=1$, we are done. Otherwise, we may apply Proposition \ref{prop:MotB-M} inductively to conclude that $\fib_{\A^1}(f) \to \Omega \cof_{\A^1}(f)$ is $n-1$-connected, so that $\fib_{\A^1}(f)$ is $n-1$-connected. 
\end{proof}

\section{Stiefel varieties}\label{sec:Stiefel}
Let $\cat{CAlg}_k$ denote the category of commutative $k$-algebras. Given nonnegative integers $r,n$ with $r \leq n$, the Stiefel variety, denoted $V_r(\A^n)$, is the affine $k$-scheme representing the functor
\[
  \cat{CAlg}_k \to \cat{Set}: R \mapsto \{ (A,B) \in \Mat_{r \times n}(R) \mid AB^T = I_r \},
\]
where $I_r$ is the $r \times r$ identity matrix. We endow the Stiefel variety $V_r(\A^n)$ with a basepoint given by the $k$-rational point
\[
  \left( \begin{bmatrix} I_r & 0 \end{bmatrix}, \begin{bmatrix} I_r & 0 \end{bmatrix} \right),
\]
so we may consider $V_r(\A^n)$ as an object of $\Shv_{\Nis}(\Sm_k)_*$.
Some special instances of Stiefel varieties are
\begin{align*}
  V_0(\A^n) = \Spec k, &\qquad V_1(\A^n) = Q_{2n-1} \simeq S^{2n-1,n}, \\
  V_{n-1}(\A^n) \iso \SL_n, &\qquad V_n(\A^n) \iso \GL_n.
\end{align*}

There is a closely related $k$-scheme also termed ``the Stiefel variety'' in the literature: let $V_r'(\A^n)$ denote the $k$-scheme representing the functor
\[
  \cat{CAlg}_k \to \cat{Set} : R \mapsto \{ A \in \Mat_{r \times n}(R) \mid \exists B \in \Mat_{r \times n}(R) \text{ such that } AB^T = I_r \}.
\]
The projection onto the first factor $V_r(\A^n) \to V_r'(\A^n)$ is an affine-space bundle and hence an $\A^1$-equivalence. Our definitions are perhaps not standard, but we are content with these choices since the $k$-varieties $V_r(\A^n)$ play a more central role in this paper.

Given another nonnegative integer $\ell$ satisfying $r + \ell \le n$, there is a closed inclusion 
\begin{align*} 
  i_{r,r+\ell}^n: V_{r}(\A^{n - \ell}) &\to V_{r+\ell}(\A^n) \\
  (A,B) &\mapsto \left(
  \begin{bmatrix}
    A & 0 \\
    0 & I_\ell
  \end{bmatrix},
  \begin{bmatrix}
    B & 0 \\
    0 & I_\ell
  \end{bmatrix} \right)
\end{align*}
where $I_\ell$ is the $\ell \times \ell$ identity matrix. There is also a projection
\[ p_{r+\ell,r}^n: V_{r+\ell}(\A^n) \to V_{r}(\A^n) \] 
given by forgetting the first $\ell$ rows. When there is no risk of confusion, we will drop the indices from the notation and denote these maps $i$ and $p$, respectively. Note that $i$ and $p$ are pointed maps. The above maps fit into an $\A^1$-fibre sequence
\[
  V_r(\A^{n-\ell}) \xrar{i} V_{r+\ell}(\A^n) \xrar{p} V_r(\A^n)
\]
(see e.g. \cite[Section 3.2]{Gant2025}). We let $i'$ and $p'$ denote the analogous inclusion $V'_r(\A^{n-\ell}) \to V'_{r+\ell}(\A^n)$ and projection $V_{r+\ell}'(\A^n) \to V_r'(\A^n)$, respectively.

\subsection{Two lemmas regarding sections}
By a \emph{section} of $p:V_{r+\ell}(\A^n)\to V_r(\A^n)$, we mean a right-inverse of $p$ in the category of $k$-schemes. The following two lemmas provide useful reductions in arguments appearing later in the paper. 
\begin{lemma}\label{lem:Sec1}
  Let $r,\ell,n$ be nonnegative integers with $r+\ell \leq n$. If $p: V_{r+\ell}(\A^n) \to V_r(\A^n)$ has a section, then $p: V_{r+\ell'}(\A^n) \to V_r(\A^n)$ has a section for any nonnegative integer $\ell' \leq \ell$.
\end{lemma}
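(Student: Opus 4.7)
The plan is to exploit the factorization of the projection $p_{r+\ell,r}$ through an intermediate Stiefel variety. Explicitly, for $\ell' \le \ell$, there is a projection $p_{r+\ell,\,r+\ell'}: V_{r+\ell}(\A^n) \to V_{r+\ell'}(\A^n)$ (forgetting the first $\ell - \ell'$ rows in the convention used above), and unpacking the definitions one sees directly that
\[
  p_{r+\ell',\,r} \circ p_{r+\ell,\,r+\ell'} = p_{r+\ell,\,r}
\]
as morphisms $V_{r+\ell}(\A^n) \to V_r(\A^n)$, since dropping $\ell - \ell'$ rows and then another $\ell'$ rows amounts to dropping $\ell$ rows.

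Given a section $s: V_r(\A^n) \to V_{r+\ell}(\A^n)$ of $p_{r+\ell,\,r}$, I would then set $s' := p_{r+\ell,\,r+\ell'} \circ s$. The displayed identity immediately gives
\[
  p_{r+\ell',\,r} \circ s' = p_{r+\ell',\,r} \circ p_{r+\ell,\,r+\ell'} \circ s = p_{r+\ell,\,r} \circ s = \id_{V_r(\A^n)},
\]
so $s'$ is the required section. The only mildly nontrivial point is checking the factorization of projections, and this is a matter of unwinding the matrix description of each map; there is no real obstacle.
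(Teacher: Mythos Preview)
Your argument is correct and is exactly the straightforward factorization the paper has in mind; the paper itself simply records that ``the proof is straightforward'' without writing out any details. There is nothing to add.
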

The proof is straightforward.
\begin{lemma}\label{lem:Sec2}
  Let $r,\ell,n,s$ be nonnegative integers with $r+\ell \leq n$. If $p: V_{r+s+\ell}(\A^{n+s}) \to V_{r+s}(\A^{n+s})$ has a section, then $p: V_{r+\ell}(\A^n) \to V_r(\A^n)$ has a section. 
\end{lemma}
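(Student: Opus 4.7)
My plan is to construct the desired section $\sigma' \colon V_r(\A^n) \to V_{r+\ell}(\A^n)$ explicitly from the given section $\sigma \colon V_{r+s}(\A^{n+s}) \to V_{r+s+\ell}(\A^{n+s})$, exploiting the block structure of the closed embedding $i_{r, r+s}$. First I would form the composite $\sigma \circ i_{r, r+s} \colon V_r(\A^n) \to V_{r+s+\ell}(\A^{n+s})$; then, using the orthogonality relations defining Stiefel points, I would show its output has a rigid block form from which a point of $V_{r+\ell}(\A^n)$ projecting to $(A_0, B_0)$ can be read off.

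The key computation is block-linear-algebraic. For $(A_0, B_0) \in V_r(\A^n)$, the image $i_{r, r+s}(A_0, B_0)$ is a pair of $(r+s) \times (n+s)$ matrices in block form with $A_0, B_0$ in the top-left and $I_s$ in the bottom-right. Writing $(\tilde A, \tilde B) = \sigma(i_{r, r+s}(A_0, B_0))$, the $\ell$ rows of $(\tilde A, \tilde B)$ that extend the image of $i_{r,r+s}$ must satisfy the orthogonality relations encoded in $\tilde A \tilde B^T = I_{r+s+\ell}$. Expanding blockwise, the presence of the $I_s$ identity forces the last $s$ columns of these extending rows to vanish, so they restrict to $\ell \times n$ matrices $\tilde A_\ell^{(1)}, \tilde B_\ell^{(1)}$ whose remaining orthogonality relations with $A_0, B_0$ are precisely those needed to specify a point of $V_{r+\ell}(\A^n)$ projecting to $(A_0, B_0)$ under $p$. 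Setting $\sigma'(A_0, B_0)$ to be this point produces a morphism of $k$-schemes with $p \circ \sigma' = \mathrm{id}$ by construction.

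Invariantly, this amounts to verifying that a natural square with horizontal arrows given by block-inclusion-with-$I_s$ and vertical arrows given by $p$ is cartesian in $\Sm_k$, and $\sigma'$ arises from $\sigma$ and $\mathrm{id}_{V_r(\A^n)}$ via the universal property of the pullback. The main obstacle is purely bookkeeping: one must identify the correct top closed embedding fitting into this cartesian square. It is not quite the standard $i_{r+\ell, r+s+\ell}$, but a variant that arranges the $I_s$ block and the extending rows differently; the two differ by an elementary row permutation, and tracking this distinction carefully is the only mildly subtle point in the argument.
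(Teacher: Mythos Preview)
Your proposal is correct and takes essentially the same approach as the paper: compose the given section with $i_{r,r+s}$, use the relation $\tilde A \tilde B^T = I$ to force the last $s$ columns of the extending $\ell$ rows to vanish, and read off the section of $p_{r+\ell,r}$ from the remaining $\ell \times n$ blocks. Your caution about needing a non-standard top embedding is in fact unnecessary in the paper's conventions (where $p$ forgets the first $\ell$ rows and $i$ appends $I_s$ at the bottom-right): the standard $i_{r+\ell, r+s+\ell}$ already makes the relevant square cartesian, as your own block computation shows.
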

\begin{proof}
  Let $\phi: V_{r+s}(\A^{n+s}) \to V_{r+s+\ell}(\A^{n+s})$ be a section of $p: V_{r+s+\ell}(\A^{n+s}) \to V_{r+s}(\A^{n+s})$. We claim that the composite 
  \[ V_r(\A^n) \xrar{i} V_{r+s}(\A^{n+s}) \xrar{\phi} V_{r+s+\ell}(\A^{n+s}) \] 
  factors through the inclusion $i: V_{r+\ell}(\A^n) \to V_{r+s+\ell}(\A^{n+s})$, providing a section of $p: V_{r+\ell}(\A^n) \to V_r(\A^n)$. To prove the claim, we argue on $R$-points. If $(A,B)$ is an $R$-point of $V_r(\A^n)$, then the element $\phi \circ i_{r,r+s}((A,B))$ is a pair of matrices
  \[\renewcommand{\arraystretch}{1.2}
    (A',B') = 
    \left( \left[
    \begin{array}{c|c}
      C_1 & D_1 \\ \hline
      A & 0 \\ \hline
      0 & I_s
    \end{array} \right],
    \left[
    \begin{array}{c|c}
      C_2 & D_2 \\ \hline
      B & 0 \\ \hline 
      0 & I_s
    \end{array} \right]
    \right) \in \Mat_{r+s+\ell, n+s}(R)^2
  \]
  where $C_i \in \Mat_{\ell \times n}(R)$ and $D_i \in \Mat_{\ell \times s}(R)$ for $i=1,2$. The condition $A'B'^T=I_{r+s+\ell}$ has the following implications:
  \[
    D_1 = D_2 = 0, \qquad C_1C_2^T = I_\ell, \qquad AC_2^T=0, \qquad C_1B^T=0.
  \]
  Since the map $\phi \circ i_{r,r+s}$ on points is natural in $R$, so is the assignment
  \[
    (A,B) \mapsto \left( 
    \begin{bmatrix}
      C_1 \\ 
      A
    \end{bmatrix}, \begin{bmatrix}
      C_2 \\
      B
    \end{bmatrix} \right),
  \]
  providing a morphism of $k$-schemes $V_r(\A^n) \to V_{r+\ell}(\A^n)$ which is a section of $p_{r+\ell,r}^n$.
\end{proof}

\section{Truncated projective spaces}\label{sec:ProjSpaces}

If $n \geq -1$ is an integer, we let $\tilde{\P}^n$ denote the Jouanolou device for $\P^n$, which we now describe (see also \cite[pp. 69]{Voevodsky2003a}, \cite[Section 5]{Williams2012}). Let $R$ be a commutative $k$-algebra. An \emph{$(n+1)$-generated split line bundle over $R$} is a pair of maps of $R$-modules
\[
  (q:R^{n+1} \to L, s: L \to R^{n+1})
\]
where $L$ is a projective $R$-module of rank $1$, the map $q$ is an $R$-module epimorphism, and $q \circ s=\id_{L}$. Note that such a pair $(q,s)$ entails an isomorphism of $R$-modules $R^{n+1} \iso \ker q \oplus L$. An \emph{isomorphism of $(n+1)$-generated split line bundles} from $(q,s)$ to 
\[
  (q':R^{n+1} \to L', s':L' \to R^{n+1})
\]
is an $R$-module isomorphism $f: L \to L'$ satisfying $f \circ q = q'$ and $s' \circ f = s$. We let $[q,s]$ denote the isomorphism class of $(q,s)$. 

Using the representability criterion of \cite[Theorem VI-14]{Eisenbud2000}, one may check that the functor $\cat{CAlg}_k \to \cat{Set}$ that sends a $k$-algebra $R$ to the set of isomorphism classes of $(n+1)$-generated split line bundles over $R$ is represented by a $k$-scheme which we denote $\tilde{\P}^n$.
\begin{remark}\label{rem:FPoints}
  If $E/k$ is a field extension, the $E$-points of $\tilde{\P}^n$ are in natural bijection with pairs $(L,W)$ of $E$-vector subspaces of $E^{n+1}$ such that $L$ is a line, $W$ is a hyperplane, and the $E$-vector space sum $L+W$ is direct. 
\end{remark}
\begin{remark}\label{rem:ProjWE}
  There is a projection $\pi: \tilde{\P}^n \to \P^n$ which, on $R$-points, forgets the section $s$. By considering the standard affine cover of $\P^n$ given on geometric points by 
  \[ U_i = \{ [x_0,\dots, x_n] \mid x_i \neq 0 \}, \]
  it is straightforward to check that $\pi$ is a Zariski-locally trivial bundle of affine spaces. In particular, the map $\pi$ is an $\A^1$-equivalence.     
\end{remark}

When $m,n$ are integers satisfying $-1 \leq m \leq n$, there are closed inclusions $\tilde{\imath}_{m,n}: \tilde{\P}^m \to \tilde{\P}^n$ defined on $R$-points as follows. Given an $R$-point $[q,s]$ of $\tilde{\P}^m$ represented by a pair 
\[ (q:R^{m+1} \to L,\: s: L \to R^{m+1}), \] 
we define $\tilde{\imath}_{m,n}([q,s])$ to be the isomorphism class of the pair 
\[
  (q \circ \pr: R^{n+1} \to L, \: \inc \circ s: L \to R^{n+1}), 
\]
where $\pr: R^{n+1} \to R^{m+1}$ is the projection onto the first $m+1$ factors, and $\inc: R^{m+1} \to R^{n+1}$ is the inclusion into the first $m+1$ factors. This assignment defines a morphism of $k$-schemes.

Similarly, let $\imath_{m,n}: \P^m \to \P^n$ denote the closed inclusion given on geometric points by 
\[
  [x_0,x_1, \dots, x_m] \mapsto [x_0,x_1, \dots, x_m,0, \dots, 0].
\]
We will omit subscripts and denote the above maps $\tilde{\imath}, \imath$ when there is no risk of confusion. There is a commutative diagram
\begin{equation}\label{eq:PIncs}
  \begin{tikzcd}
    \tilde{\P}^m \rar["\tilde{\imath}"] \dar["\pi","\weq"'] & \tilde{\P}^n \dar["\pi","\weq"'] \\
    \P^m \rar["\imath"] & \P^n
  \end{tikzcd}
\end{equation}
where the vertical maps are the $\A^1$-equivalences of Remark \ref{rem:ProjWE}. We consider $\P^n,\tilde{\P}^n$ to be pointed objects with basepoints given by $\imath_{0,n},\tilde{\imath}_{0,n}$, respectively.

\begin{lemma}\label{lem:iConn}
  If $m,n$ are nonnegative integers with $m \leq n$, the inclusion $\imath: \P^m \to \P^n$ is $\A^1$-$m-1$-connected. 
\end{lemma}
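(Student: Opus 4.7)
The plan is to reduce the computation to the connectivity of the inclusion of motivic spheres $j\colon \A^{m+1}\setminus 0 \hookrightarrow \A^{n+1}\setminus 0$, by exploiting the $\G_m$-torsor structure on the scaling projections $\A^{k+1}\setminus 0 \to \P^k$. First, I would observe that the linear inclusion $\A^{m+1}\hookrightarrow\A^{n+1}$ on the first $m+1$ coordinates is $\G_m$-equivariant, so it fits into a commutative diagram of $\A^1$-fibre sequences
\[
  \begin{tikzcd}
    \G_m \rar["\id"] \dar & \G_m \dar \\
    \A^{m+1}\setminus 0 \rar["j"] \dar & \A^{n+1}\setminus 0 \dar \\
    \P^m \rar["\imath"] & \P^n
  \end{tikzcd}
\]
in which the top row is the identity. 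Taking vertical fibres yields an equivalence $\fib_{\A^1}(\imath) \simeq \fib_{\A^1}(j)$, so it suffices to bound the $\A^1$-connectivity of $j$.

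For $j$ itself, the standard $\A^1$-equivalence $\A^{k+1}\setminus 0 \simeq_{\A^1} S^{2k+1,\,k+1}$ combined with Morel's connectivity theorem shows that the source of $j$ is $\A^1$-$(m-1)$-connected and its target is $\A^1$-$(n-1)$-connected. The case $m = n$ is trivial, so I may assume $m < n$, whence $m \le n - 1$. Inspecting the long exact sequence of $\A^1$-homotopy sheaves for the $\A^1$-fibre sequence $\fib_{\A^1}(j) \to \A^{m+1}\setminus 0 \to \A^{n+1}\setminus 0$, one finds $\bpi_i(\fib_{\A^1}(j)) = 0$ for every $i \le m - 1$: in that range $\bpi_i(\A^{m+1}\setminus 0) = 0$ and, since $i + 1 \le m \le n - 1$, also $\bpi_{i+1}(\A^{n+1}\setminus 0) = 0$.

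I expect no substantive obstacle; the only routine points that need invocation are that a Zariski-locally trivial $\G_m$-torsor remains a fibre sequence after $\A^1$-localization (so that both columns above are $\A^1$-fibre sequences) and that vertical fibres of a map of fibre sequences themselves form a fibre sequence. Note that the more direct route via Lemma \ref{lem:B-MApp1} applied to $\cof(\imath) \simeq \P^n/\P^m$, which is $\A^1$-$m$-connected by the motivic cell structure, is not available here, since $\P^m$ fails to be $\A^1$-simply connected for $m \ge 1$; the $\G_m$-bundle trick is essentially what circumvents this obstruction.
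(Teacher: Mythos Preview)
Your proposal is correct and follows essentially the same approach as the paper: both reduce to the connectivity of the inclusion $\A^{m+1}\setminus 0 \hookrightarrow \A^{n+1}\setminus 0$ via the $\G_m$-torsor structure, the only cosmetic difference being that the paper phrases the fibre sequences as $\A^{k+1}\setminus 0 \to \P^k \to B\G_m$ rather than $\G_m \to \A^{k+1}\setminus 0 \to \P^k$. The paper simply asserts that the inclusion of punctured affine spaces is $\A^1$-$(m-1)$-connected as well known, whereas you spell out the long exact sequence argument explicitly.
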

\begin{proof}
  Let $\bar{0}$ denote the closed point at the origin of $\A^n$. Using \cite[Theorem 6.53]{Morel2012}, the action of $\G_m$ on $\A^n \sm \bar{0}$ and $\A^m \sm \bar{0}$ yields a map of $\A^1$-fibre sequences:
  \[
    \begin{tikzcd}
      \A^{m+1} \sm \bar{0} \dar["\imath'"] \rar & \P^m \dar["\imath"] \rar & B \G_m \dar[equals]\\
      \A^{n+1} \sm \bar{0} \rar & \P^{n} \rar & B \G_m
    \end{tikzcd}
  \]
  where $\imath': \A^{m+1} \sm \bar{0} \to \A^{n+1} \sm \bar{0}$ is the inclusion 
  \[ (x_1, \dots, x_{m+1}) \mapsto (x_1, \dots, x_{m+1},0, \dots, 0), \]
  which is well known to be $\A^1$-$m-1$-connected \cite[Corollary 6.43]{Morel2012}. Standard results in homotopy theory yield an equivalence $\fib_{\A^1}(\imath') \to \fib_{\A^1}(\imath)$ from which we deduce the result.
\end{proof}

Let $\P_{m+1}^n, \tilde{\P}_{m+1}^n$ denote the $\A^1$-cofibres of $\imath_{m,n},\tilde{\imath}_{m,n}$, respectively (intuitively, $\P_{m+1}^n \homeq \P^n / \P^m$). Note that \eqref{eq:PIncs} induces an equivalence $\tilde{\P}_{m+1}^n \homeq \P_{m+1}^n$. We denote by $\rho$ the natural map $\P^n \to \P^n_{m+1}$ fitting into the $\A^1$-cofibre sequence 
\[
  \P^m \xrar{\imath} \P^n \xrar{\rho} \P^n_{m+1}.
\]
If $r \ge -1$ is another integer with $r \leq m$, the commuting square
\begin{equation}\label{eq:iComm}
  \begin{tikzcd}
    \P^r \rar[equals] \dar["\imath"] & \P^r \dar["\imath"] \\
    \P^m \rar["\imath"] & \P^n
  \end{tikzcd}
\end{equation}
induces a map $\P^m_{r+1} \to \P^n_{r+1}$ after taking $\A^1$-cofibres of the vertical inclusions. In a mild abuse of notation, we denote this induced map also by $\imath: \P^m_{r+1} \to \P^n_{r+1}$. Note that the commuting square \eqref{eq:iComm} induces an equivalence 
\[ \P^n_{m+1} \to \cof_{\A^1}(\imath: \P^m_{r+1} \to \P^n_{r+1}), \]
and we obtain a map $\P^n_{r+1} \to \P^n_{m+1}$ which we also denote by $\rho$. There are maps $\tilde{\imath}: \tilde{\P}^m_{r+1} \to \tilde{\P}^n_{r+1}$ and $\tilde{\rho}: \tilde{\P}^n_{r+1} \to \tilde{\P}^n_{m+1}$, defined similarly.
\begin{lemma}\label{lem:iConn2}
  Let $r,m,n$ be nonnegative integers with $r \le m \le n$. The following hold:
  \begin{enumerate}
    \item\label{i:1} The motivic space $\P^m_{r+1}$ is $\A^1$-$r$-connected;
    \item\label{i:2} The map $\imath: \P^m_{r+1} \to \P^n_{r+1}$ is $\A^1$-$m-1$-connected.
  \end{enumerate} 
\end{lemma}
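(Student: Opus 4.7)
The plan is to prove (1) by applying the motivic Blakers--Massey theorem (\Cref{prop:MotB-M}) directly to $\imath\colon \P^r \to \P^m$, and then to derive (2) from (1) via \Cref{lem:B-MApp1} applied to $\imath\colon \P^m_{r+1} \to \P^n_{r+1}$. The key inputs are \Cref{lem:iConn} and the identification of the relevant $\A^1$-cofibres recorded just before the statement. The main subtlety will be the boundary case $r = 0$, where both \Cref{prop:MotB-M} and \Cref{lem:B-MApp1} fail to apply directly; the saving grace will be the $\A^1$-equivalence $\P^m_1 \simeq \P^m$ in the pointed setting.

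For (1), assume first that $r \ge 1$. I would apply \Cref{prop:MotB-M} to $\imath\colon \P^r \to \P^m$ with input connectivities $n = 0$ (since $\P^r$ is $\A^1$-connected) and $m' = r - 1$ (by \Cref{lem:iConn}), so that the hypothesis $n + m' = r - 1 \ge 0$ holds. The conclusion is that the comparison $\fib_{\A^1}(\imath) \to \Omega \cof_{\A^1}(\imath)$ is $(r-1)$-connected. Since $\fib_{\A^1}(\imath)$ is itself $\A^1$-$(r-1)$-connected, the long exact sequence of homotopy sheaves for the fibre sequence of this comparison map forces $\Omega \cof_{\A^1}(\imath)$ to be $(r-1)$-connected, whence $\P^m_{r+1} = \cof_{\A^1}(\imath)$ is $\A^1$-$r$-connected. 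For $r = 0$ one observes instead that the $\A^1$-cofibre of the basepoint inclusion $\P^0 \to \P^m$ is equivalent to $\P^m$ in the pointed $\A^1$-homotopy category, and the latter is $\A^1$-connected.

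For (2), the $\A^1$-cofibre of $\imath\colon \P^m_{r+1} \to \P^n_{r+1}$ is $\P^n_{m+1}$ by the discussion recalled immediately before the statement, and this is $\A^1$-$m$-connected by (1). When $r \ge 1$, part (1) also tells us that $\P^m_{r+1}$ is $\A^1$-simply connected and $\P^n_{r+1}$ is $\A^1$-connected; so \Cref{lem:B-MApp1}, applied with its integer parameter taken to be $m \ge 1$, yields that $\imath$ is $\A^1$-$(m-1)$-connected. In the boundary case $r = 0$, the equivalences $\P^m_1 \simeq \P^m$ and $\P^n_1 \simeq \P^n$ identify $\imath\colon \P^m_1 \to \P^n_1$ with $\imath\colon \P^m \to \P^n$, and the desired connectivity is then immediate from \Cref{lem:iConn}.
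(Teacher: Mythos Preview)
Your proof is correct. Part~(1) and the boundary case $r=0$ of part~(2) match the paper's argument exactly. For part~(2) with $r\ge 1$ you take a slightly more direct route than the paper: rather than factoring $\imath$ as a composite
\[
  \P^m_{r+1} \xrar{\imath} \P^{m+1}_{r+1} \xrar{\imath} \cdots \xrar{\imath} \P^n_{r+1}
\]
and applying \Cref{lem:B-MApp1} to each step (whose cofibre is a motivic sphere, with connectivity supplied by \cite[Corollary~6.43]{Morel2012}), you identify the cofibre of the full map as $\P^n_{m+1}$ and feed its connectivity, already established in part~(1), straight into \Cref{lem:B-MApp1}. This is a mild simplification: it avoids both the factorization and the external citation to Morel's sphere connectivity, at the cost of relying on the cofibre identification $\cof_{\A^1}(\imath\colon \P^m_{r+1}\to\P^n_{r+1})\homeq \P^n_{m+1}$ recorded just before the lemma.
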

\begin{proof}
  The motivic space $\P^m_{r+1}$ and the map $\imath$ may be constructed over any prime field (and in fact over $\Z$). Using Lemma \ref{lem:BaseChange}, we may therefore assume $k$ is a (perfect) prime field. First, we establish \eqref{i:1}. Since $\P^m$ is $\A^1$-connected, the statement is true if $r=0$. Suppose $r > 0$, and note that the inclusion
  \[ \imath_{r,m}: \P^r \to \P^m \]
  is a $\A^1$-$r-1$-connected by Lemma \ref{lem:iConn}. Since $r > 0$, we may apply Proposition \ref{prop:MotB-M} to the map $\imath_{r,m}$ to deduce that the natural map
  \[
    \fib_{\A^1}(\imath_{r,m}) \to \Omega \P^m_{r+1}
  \]
  is $r-1$-connected. In particular, the induced map 
  \[ 
    \bpi_i(\fib_{\A^1}(\imath_{r,m})) \to \bpi_i(\Omega \P^m_{r+1}) \cong \bpi_{i+1}(\P^m_{r+1})
  \]
  is an isomorphism for $i < r$. The homotopy sheaves $\bpi_i(\fib_{\A^1}(\imath_{r,m}))$ are trivial when $i < r$, so we are done.

  For \eqref{i:2}, the case of $r=0$ is handled by Lemma \ref{lem:iConn}, so assume $r > 0$. It suffices to treat the case of $n=m+1$, as the map $\imath: \P_{r+1}^m \to \P_{r+1}^n$ factors as
  \[ 
    \P_{r+1}^m \xrar{\imath} \P_{r+1}^{m+1} \xrar{\imath} \cdots \xrar{\imath} \P_{r+1}^{n-1} \xrar{\imath} \P_{r+1}^n. 
  \]
  The $\A^1$-cofibre of $\imath: \P^m_{r+1} \to \P^{m+1}_{r+1}$ is equivalent to the $\A^1$-cofibre of the inclusion $\imath: \P^m \to \P^{m+1}$, which is the motivic sphere $S^{2(m+1),m+1}$.  
  
  The source and target of $\imath: \P^m_{r+1} \to \P^{m+1}_{r+1}$ are $\A^1$-$r$-connected by \eqref{i:1}, so $\imath$ is (at least) $\A^1$-$r-1$-connected. Since $r > 0$, we may apply Lemma \ref{lem:B-MApp1} to $\imath: \P^m_{r+1} \to \P^{m+1}_{r+1}$, noting that $S^{2(m+1),m+1}$ is $\A^1$-$m$-connected \cite[Corollary 6.43]{Morel2012}, to conclude that $\imath$ is $\A^1$-$m-1$-connected. 
\end{proof}

\section{The comparison map \texorpdfstring{$f_r^n: \Sigma^{1,1} \tilde{\P}_{n-r}^{n-1} \to V_r(\A^n)$}{frn}}\label{sec:frn}

We now describe the comparison map $f_r^n: \Sigma^{1,1} \tilde{\P}_{n-r}^{n-1} \to V_r(\A^n)$ which plays a central role in calculations to come. The map $f_r^n$ was originally constructed in \cite{Williams2012}; we review the construction.

An $R$-point of the $k$-scheme $\tilde{\P}^{n-1} \times \G_m$ is given by a pair $([q,s], \lambda)$, where $[q,s]$ is an $R$-point of $\tilde{\P}^{n-1}$ represented by  
\[
  (q: R^n \to L,\: s: L \to R^n),
\]
and $\lambda \in R^\times$. There is a morphism of $k$-schemes
\[ f'_n : \tilde{\P}^{n-1} \times \G_m \to \GL_n, \]
which on $R$-points sends the pair $([q,s],\lambda)$ to the unique element of $\GL_n(R)$ which, under the isomorphism $R^n \iso \ker q \oplus L$ determined by $q$ and $s$, scales $L$ by $\lambda$ and acts by the identity on $\ker q$.

The morphisms $f'_n$ are compatible with inclusions in the sense that 
\[
  \begin{tikzcd}
    \tilde{\P}^{m-1} \times \G_m \rar["\tilde{\imath} \times \id"] \dar["f'_m"] & \tilde{\P}^{n-1} \times \G_m \dar["f'_n"] \\
    \GL_m \rar["i"] & \GL_n
  \end{tikzcd}
\]
commutes. Since $f'_n$ is constant on $\tilde{\P}^{n-1} \times \{1\}$, the map $f_n'$ factors through the cofibre  
\[ \tilde{\P}^{n-1} \times \G_m / (\tilde{\P}^{n-1} \times \{1\}) \homeq \Sigma^{1,1}\tilde{\P}^{n-1}_+, \]
and we denote the resulting map
\[ f_n: \Sigma^{1,1} \tilde{\P}^{n-1}_+ \to \GL_n. \]

The left square in
\begin{equation}\label{eq:MapOnCofib}
  \begin{tikzcd}[column sep = large]
     \Sigma^{1,1}\tilde{\P}^{n-r-1}_+ \rar["\Sigma^{1,1} \tilde{\imath}_+"] \dar["f_{n-r}"] & \Sigma^{1,1}\tilde{\P}^{n-1}_+ \dar["f_n"] \rar["\Sigma^{1,1}\tilde{\rho}_+"] \rar & \Sigma^{1,1} \tilde{\P}^{n-1}_{n-r} \dar[dashed,"f_r^n"] \\
     \GL_{n-r} \rar["i"] & \GL_n \rar["p"] & V_r(\A^n)
  \end{tikzcd}
\end{equation}
commutes, where the top row is an $\A^1$-cofibre sequence, and the bottom row is an $\A^1$-fibre sequence. Since the composite
\[
  \Sigma^{1,1}\tilde{\P}^{n-r-1}_+ \xrar{f_{n-r}} \GL_{n-r} \xrar{i} \GL_n \xrar{p} V_r(\A^n)
\]
is null, the dashed arrow in \eqref{eq:MapOnCofib} exists making the right square commute. We denote this induced map $f_r^n: \Sigma^{1,1} \tilde{\P}^{n-1}_{n-r} \to V_r(\A^n)$. 
\begin{remark}
  We are abusing notation here; in general, the map $f_r^n$ only exists after $\A^1$-localization. We will continue this abuse of notation for the rest of the paper.  
\end{remark}
\begin{remark}
  The map $f_{n-1}^n: \Sigma^{1,1} \tilde{\P}^{n-1} \to V_{n-1}(\A^n) \iso \SL_n$ is studied in \cite{Rondigs2023} (where it is denoted $\psi_n$). In particular, $f_1^2: \Sigma^{1,1} \tilde{\P}^1 \to \SL_2$ is an $\A^1$-equivalence (\cite[Lemma 4.1]{Rondigs2023}, c.f. \eqref{i:1} of Proposition \ref{prop:frnConnectivity}), and the $\A^1$-cofibre of $f_2^3: \Sigma^{1,1} \tilde{\P}^2 \to \SL_3$ is equivalent to the motivic sphere $S^{8,5}$ (\cite[Lemma 4.4]{Rondigs2023}, c.f. Remark \ref{rem:Sphere}).
\end{remark}

The maps $f_r^n$ are compatible with inclusions, i.e., the diagram 
\begin{equation}\label{eq:iotai}
  \begin{tikzcd}
    \Sigma^{1,1} \tilde{\P}_{n-r}^{n-1} \dar["f_{r}^{n}"] \rar["\Sigma^{1,1} \tilde{\imath}"] & \Sigma^{1,1} \tilde{\P}_{n-r}^{n+\ell-1} \dar["f_{r+\ell}^{n+\ell}"] \\
    V_{r}(\A^{n}) \rar["i"] & V_{r+\ell}(\A^{n+\ell})
  \end{tikzcd}
\end{equation}
commutes. It follows that the maps $f_r^n$ are also compatible with the projections: the diagram 
\begin{equation}\label{eq:prho}
  \begin{tikzcd}
    \Sigma^{1,1}\tilde{\P}^{n-1}_{n-r-\ell} \dar{f_{r+\ell}^n} \rar{\Sigma^{1,1}\tilde{\rho}} & \Sigma^{1,1}\tilde{\P}^{n-1}_{n-r} \dar{f_r^n} \\
    V_{r+\ell}(\A^n) \rar{p} & V_r(\A^n)
  \end{tikzcd}
\end{equation}
commutes.

There is an $\A^1$-cofibre sequence
\[
  V_{r-1}(\A^{n-1}) \xrar{i} V_r(\A^n) \to \Sigma^{2n-1,n} V_{r-1}(\A^{n-1})_+
\]
constructed in \cite[Section 3.2]{Peng2023} (see also \cite[Proposition 4.2]{Rondigs2023}), and the maps $f_r^n$ induce a map of $\A^1$-cofibre sequences
\begin{equation}\label{eq:psiDef}
  \begin{tikzcd}
    \Sigma^{1,1} \tilde{\P}_{n-r}^{n-2} \dar["f_{r-1}^{n-1}"] \rar["\Sigma^{1,1} \tilde{\imath}"] & \Sigma^{1,1} \tilde{\P}_{n-r}^{n-1} \dar["f_r^n"] \rar & S^{2n-1,n} \dar[dashed, "\psi"]\\
    V_{r-1}(\A^{n-1}) \rar["i"] & V_r(\A^n) \rar & \Sigma^{2n-1,n}V_{r-1}(\A^{n-1})_+
  \end{tikzcd}
\end{equation}
where we denote the induced map of $\A^1$-cofibres by $\psi$. The following lemma is useful in our calculation of the $\A^1$-connectivity of $f_r^n$. 

\begin{lemma}\label{lem:IncBasepoint}
  The map $\psi$ is (up to homotopy) given by $\Sigma^{2n-1,n}(-)_+$ applied to the inclusion of the basepoint. 
  \[ \Spec k \to V_{r-1}(\A^{n-1}). \] 
\end{lemma}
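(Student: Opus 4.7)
The plan is to identify $\psi$ by composing with a canonical augmentation, and then to upgrade this identification using a connectivity argument.

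First, the collapse map $V_{r-1}(\A^{n-1})_+ \to S^0$ induces, after $\Sigma^{2n-1,n}$, an augmentation
\[
  \epsilon\colon \Sigma^{2n-1,n} V_{r-1}(\A^{n-1})_+ \to S^{2n-1,n},
\]
which is split by $s := \Sigma^{2n-1,n}(\iota_0)_+$, the suspended basepoint inclusion corresponding to $\iota_0\colon \Spec k \to V_{r-1}(\A^{n-1})$. This yields a wedge decomposition
\[
  \Sigma^{2n-1,n}V_{r-1}(\A^{n-1})_+ \homeq S^{2n-1,n} \vee \Sigma^{2n-1,n}V_{r-1}(\A^{n-1})
\]
in which $s$ is the inclusion of the first summand. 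Thus the lemma is equivalent to the assertion $\psi \simeq s$.

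The key geometric input is that, under the canonical identification $V_1(\A^n) \homeq S^{2n-1,n}$, the composite $V_r(\A^n) \to \Sigma^{2n-1,n}V_{r-1}(\A^{n-1})_+ \xrar{\epsilon} S^{2n-1,n}$ agrees with the projection $p_{r,1}$; this should be built into the construction of the cofibre sequence in \cite{Peng2023, Rondigs2023}. Granting this, the naturality \eqref{eq:prho} of the maps $f_r^n$ (applied with $(r, \ell)$ replaced by $(1, r-1)$) combined with the defining diagram \eqref{eq:psiDef} of $\psi$ yields $\epsilon \circ \psi \simeq f_1^n$, which is an $\A^1$-equivalence (by, for instance, the $r=1$ case of \Cref{prop:frnConnectivity}). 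Since $\epsilon \circ s = \id$, both $\psi$ and $s$ are sections of $\epsilon$ up to the self-equivalence $f_1^n$ of $S^{2n-1,n}$.

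To upgrade this to $\psi \simeq s$, observe that any two pointed maps $S^{2n-1,n} \to \Sigma^{2n-1,n}V_{r-1}(\A^{n-1})_+$ agreeing after composition with $\epsilon$ differ by an element of the group $[S^{2n-1,n}, \Sigma^{2n-1,n}V_{r-1}(\A^{n-1})]$ of homotopy classes into the fibre of $\epsilon$. The Stiefel variety $V_{r-1}(\A^{n-1})$ is $\A^1$-$2(n-r)$-connected, by iterated use of Morel's unstable $\A^1$-connectivity theorem applied to the fibre sequences $V_{s-1}(\A^{m-1}) \to V_s(\A^m) \to V_1(\A^m) \homeq S^{2m-1,m}$. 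Hence, by \Cref{lem:SmashConn}, the space $\Sigma^{2n-1,n}V_{r-1}(\A^{n-1})$ is $\A^1$-$(4n-2r-1)$-connected. Since $4n - 2r - 1 \geq 2n - 1$ whenever $r \leq n$, this obstruction group vanishes by a standard connectivity argument, giving $\psi \simeq s$.

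The principal technical obstacle is verifying the identification of $\epsilon$ with (the cofibred version of) $p_{r,1}$ at the level of cofibres, which requires a careful reading of the construction of the cofibre sequence in \cite{Peng2023, Rondigs2023}.
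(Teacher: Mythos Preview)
Your proposal contains a significant connectivity error that undermines the argument. You claim that $V_{r-1}(\A^{n-1})$ is $\A^1$-$2(n-r)$-connected, but this is false: the Stiefel variety $V_s(\A^m)$ is only $\A^1$-$(m-s-1)$-connected (as cited in the paper, \cite[Proposition 3.2]{Gant2025}), so $V_{r-1}(\A^{n-1})$ is $\A^1$-$(n-r-1)$-connected. Consequently $\Sigma^{2n-1,n}V_{r-1}(\A^{n-1})$ is only $\A^1$-$(2n-r-2)$-connected (exactly as stated in the proof of \Cref{prop:frnConnectivity}), not $(4n-2r-1)$-connected. With the correct bound, your obstruction group $[S^{2n-1,n},\Sigma^{2n-1,n}V_{r-1}(\A^{n-1})]$ vanishes only when $n-1 \le 2n-r-2$, i.e.\ $r \le n-1$; for $r=n$ (where $V_{r-1}(\A^{n-1})=\GL_{n-1}$ is not even $\A^1$-connected) the argument breaks down entirely.

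There are further gaps. First, your deduction of $\epsilon\circ\psi \simeq f_1^n$ from \eqref{eq:psiDef} and \eqref{eq:prho} only establishes this identity \emph{after precomposition} with the collapse map $\Sigma^{1,1}\tilde\P^{n-1}_{n-r}\to S^{2n-1,n}$; cancelling that map requires $[\Sigma^{2,1}\tilde\P^{n-2}_{n-r}, S^{2n-1,n}]$ to act trivially on $[S^{2n-1,n},S^{2n-1,n}]$, which you have not argued. Second, even granting $\epsilon\circ\psi\simeq f_1^n$, your argument yields only $\psi\simeq s\circ f_1^n$, i.e.\ the basepoint inclusion \emph{twisted by the self-equivalence $f_1^n$}. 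This weaker conclusion does suffice for the application in \Cref{prop:frnConnectivity} (since only $\cof_{\A^1}(\psi)$ matters there), but it is not the lemma as stated. Finally, your ``key geometric input'' that $\epsilon$ composed with the cofibre map recovers $p_{r,1}$ is itself nontrivial and, as you acknowledge, requires unpacking the construction of the cofibre sequence.

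For comparison, the paper's proof (deferred to the appendix) takes the opposite route: it \emph{reduces} to the case $r=n$ via compatibility with the projections $p$, and then carries out an explicit geometric analysis of the map $f_n'\colon \tilde\P^{n-1}\times(\G_m\setminus 1)\to\GL_n$ using transversality and naturality of the purity equivalence, tracking the relevant Thom spaces directly. This avoids all connectivity and obstruction arguments but is considerably more hands-on.
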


Our proof of Lemma \ref{lem:IncBasepoint} is technical and will be deferred to Appendix \ref{sec:ProofOfInc}.

\subsection{The connectivity of \texorpdfstring{$f_r^n$}{frn}} 

We now come to the main result of this section. Proposition \ref{prop:frnConnectivity} may be seen as a motivic analogue of the classical fact that the real Stiefel manifold $\orth(n)/ \orth(n-r)$ admits a cell structure obtained from $\RP^{n-1}/\RP^{n-r-1}$ by attaching cells of dimension $2(n-r)+1$ and larger \cite{Whitehead1944}.

\begin{proposition}\label{prop:frnConnectivity}
  Let $n$ be a positive integer. The following hold: 
  \begin{enumerate}
    \item\label{i:a} The map $f_1^n: \Sigma^{1,1} \tilde{\P}_{n-1}^{n-1} \to V_1(\A^n)$ is an $\A^1$-equivalence;
    \item\label{i:b} Let $r$ be another integer such that $1 < r \leq n-2$. The map $f_r^n\co \Sigma^{1,1} \tilde{\P}_{n-r}^{n-1} \to V_r(\A^n)$ is $\A^1$-$2(n-r)-1$-connected.
  \end{enumerate} 
\end{proposition}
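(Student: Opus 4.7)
The plan is to prove both parts by induction on $r$, using the map of $\A^1$-cofibre sequences \eqref{eq:psiDef} together with \Cref{lem:IncBasepoint}.

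For part \eqref{i:a}, I would specialize \eqref{eq:psiDef} to $r=1$. The top-left entry $\Sigma^{1,1}\tilde{\P}^{n-2}_{n-1}$ is contractible (being the $\G_m$-suspension of the cofibre of the identity on $\tilde{\P}^{n-2}$), and the bottom-left entry $V_0(\A^{n-1}) = \Spec k$ is a point. By \Cref{lem:IncBasepoint}, $\psi$ is $\Sigma^{2n-1,n}$ applied to the identity basepoint inclusion $\Spec k \to \Spec k$, hence is an $\A^1$-equivalence. Since the two outer vertical maps are $\A^1$-equivalences, the usual comparison of cofibre sequences (via the $3 \times 3$ lemma in the $\infty$-topos $\cat{Shv}_{\Nis}(\Sm_k)_*$) forces $f_1^n$ to be an $\A^1$-equivalence.

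For part \eqref{i:b}, I would induct on $r$ with base case $r=2$ (handled in the same way as the inductive step, using \eqref{i:a} in place of the inductive hypothesis). Given $2 \le r \le n-2$, the map of cofibre sequences \eqref{eq:psiDef} and \Cref{lem:IncBasepoint} yield two connectivity inputs:
\begin{itemize}
\item By the induction hypothesis (or \eqref{i:a} when $r=2$), $f_{r-1}^{n-1}$ is at least $\A^1$-$(2(n-r)-1)$-connected, so $\cof_{\A^1}(f_{r-1}^{n-1})$ is $\A^1$-$2(n-r)$-connected.
\item By \Cref{lem:IncBasepoint}, $\cof_{\A^1}(\psi) \homeq \Sigma^{2n-1,n} V_{r-1}(\A^{n-1})$. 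Combining the Stiefel $\A^1$-fibre sequence $V_1(\A^{n-r+1}) \to V_{r-1}(\A^{n-1}) \to V_1(\A^{n-1})$ (a special case of the fibre sequence recalled in \Cref{sec:Stiefel}) with $V_1(\A^m) \homeq S^{2m-1,m}$ shows that $V_{r-1}(\A^{n-1})$ is $\A^1$-$2(n-r)$-connected, whence \Cref{lem:SmashConn} gives that $\cof_{\A^1}(\psi)$ is $\A^1$-$(4n-2r-1)$-connected.
\end{itemize}

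The $3 \times 3$ lemma in the $\infty$-topos $\cat{Shv}_{\Nis}(\Sm_k)_*$ supplies a cofibre sequence $\cof_{\A^1}(f_{r-1}^{n-1}) \to \cof_{\A^1}(f_r^n) \to \cof_{\A^1}(\psi)$. Applying \Cref{lem:B-MApp1} to the left map, whose cofibre is the highly connected $\cof_{\A^1}(\psi)$, then reading the long exact sequence of the resulting $\A^1$-fibre sequence against the connectivity of $\cof_{\A^1}(f_{r-1}^{n-1})$, yields that $\cof_{\A^1}(f_r^n)$ is $\A^1$-$2(n-r)$-connected. Since the source $\Sigma^{1,1}\tilde{\P}^{n-1}_{n-r}$ is $\A^1$-$(n-r)$-connected by \Cref{lem:iConn2} and \Cref{lem:SmashConn} (hence $\A^1$-simply connected as $r \le n-2$), and the target $V_r(\A^n)$ is $\A^1$-connected, a final application of \Cref{lem:B-MApp1} to $f_r^n$ yields the asserted $\A^1$-$(2(n-r)-1)$-connectivity.

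The main obstacle is the connectivity bookkeeping across the cofibre-of-cofibres argument: ensuring the bound for the middle term of $\cof_{\A^1}(f_{r-1}^{n-1}) \to \cof_{\A^1}(f_r^n) \to \cof_{\A^1}(\psi)$ is rigorous in the unstable motivic setting. This reduces to a controlled double application of the Blakers--Massey-type \Cref{lem:B-MApp1}, with careful verification that the simply-connected and connected hypotheses are satisfied at each step.
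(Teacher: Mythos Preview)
Your proof follows the same strategy as the paper's: induct on $r$ using the map of cofibre sequences \eqref{eq:psiDef}, identify $\cof_{\A^1}(\psi)\simeq\Sigma^{2n-1,n}V_{r-1}(\A^{n-1})$ via \Cref{lem:IncBasepoint}, pass to the cofibre-of-cofibres sequence, and apply \Cref{lem:B-MApp1} twice. The paper also spells out two points you only gesture at: the passage from ``$f_{r-1}^{n-1}$ is $\A^1$-$(2(n-r)-1)$-connected'' to ``$\cof_{\A^1}(f_{r-1}^{n-1})$ is $\A^1$-$2(n-r)$-connected'' requires an explicit use of \Cref{prop:MotB-M}, and the hypothesis that $\cof_{\A^1}(f_r^n)$ is $\A^1$-connected (needed before invoking \Cref{lem:B-MApp1} on $g$) is verified by a preliminary Blakers--Massey argument applied to $f_r^n$ itself.

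Your connectivity numbers are off, however. The motivic sphere $S^{2m-1,m}$ is only $\A^1$-$(m-2)$-connected, so your fibre sequence gives $V_{r-1}(\A^{n-1})$ as $\A^1$-$(n-r-1)$-connected (this is what the paper cites from \cite[Proposition 3.2]{Gant2025}), and \Cref{lem:SmashConn} then yields that $\Sigma^{2n-1,n}V_{r-1}(\A^{n-1})$ is $\A^1$-$(2n-r-2)$-connected, not $(4n-2r-1)$-connected. You appear to be importing the connectivity of the complex Stiefel manifolds rather than the $\A^1$-connectivity. Likewise, $\Sigma^{1,1}\tilde{\P}^{n-1}_{n-r}$ is $\A^1$-$(n-r-1)$-connected rather than $(n-r)$-connected, since $\G_m$-suspension does not raise $\A^1$-connectivity. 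Fortunately these corrections do not break the argument: because $r\ge 2$ one still has $2n-r-2 \ge 2(n-r)$, which is precisely the inequality needed to push the connectivity of $\cof_{\A^1}(f_{r-1}^{n-1})$ through to $\cof_{\A^1}(f_r^n)$.
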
 
\begin{proof}
  The construction of $f_r^n$ may be carried out over any prime field (and in fact over $\Z$), so, using Lemma \ref{lem:BaseChange}, we may assume $k$ is a (perfect) prime field. The proof of \eqref{i:b} is by induction on $r$, the base case being implied by \eqref{i:a}. There is a map of $\A^1$-cofibre sequences
  \[
    \begin{tikzcd}
      \Spec(k) = \Sigma^{1,1} \tilde{\P}^{n-2}_{n-1} \dar["f_0^{n-1}"] \rar["\Sigma^{1,1} \tilde{\imath}"] & \Sigma^{1,1} \tilde{\P}^{n-1}_{n-1} \dar["f_1^n"] \rar["\weq"] & S^{2n-1,n} \dar["\psi","\weq"'] \\
      \Spec(k) = V_0(\A^{n-1}) \rar["i"] & V_1(\A^n) \rar["\weq"] & \Sigma^{2n-1,n} \Spec(k)_+
    \end{tikzcd}
  \]
  where the right vertical map is an $\A^1$-equivalence by Lemma \ref{lem:IncBasepoint}. The 2-out-of-3 property of equivalences establishes \eqref{i:a}. 
  
  For the inductive step, we consider the diagram
  \begin{equation}\label{eq:InductiveStep}
    \begin{tikzcd}
      \Sigma^{1,1} \tilde{\P}^{n-2}_{n-r} \dar["f_{r-1}^{n-1}"] \rar["\Sigma^{1,1} \tilde{\imath}"] & \Sigma^{1,1} \tilde{\P}^{n-1}_{n-r} \dar["f_r^n"] \rar & S^{2n-1,n} \dar["\psi"] \\
      V_{r-1}(\A^{n-1}) \dar \rar["i"] & V_r(\A^n) \dar \rar & \Sigma^{2n-1,n} V_{r-1}(\A^{n-1})_+ \dar \\
      \cof_{\A^1}(f_{r-1}^{n-1}) \rar["g"] & \cof_{\A^1}(f_r^n) \rar & \Sigma^{2n-1,n} V_{r-1}(\A^{n-1})
    \end{tikzcd}
  \end{equation}
  where the top two rows and left two columns are $\A^1$-cofibre sequences and $g$ is the natural induced map. The rightmost column is an $\A^1$-cofibre sequence by Lemma \ref{lem:IncBasepoint}. Moreover, using standard results in homotopy theory, there is an $\A^1$-equivalence $\cof_{\A^1}(g) \homeq \Sigma^{2n-1,n} V_{r-1}(\A^{n-1})$. That is, every row and column of \eqref{eq:InductiveStep} is an $\A^1$-cofibre sequence.
  
  The map $f_{r-1}^{n-1}$ is $\A^1$-$(2(n-r)-1)$-connected by hypothesis, and $\Sigma^{1,1} \tilde{\P}_{n-r}^{n-2}$ is $\A^1$-$(n-r-1)$-connected by Lemmas \ref{lem:iConn2} and \ref{lem:SmashConn}. Proposition \ref{prop:MotB-M} tells us that the natural map 
  \[
    \fib_{\A^1}(f_{r-1}^{n-1}) \to \Omega \cof_{\A^1}(f_{r-1}^{n-1})
  \]
  is $(3(n-r)-2)$-connected. Since $\fib_{\A^1}(f_{r-1}^{n-1})$ is $(2(n-r)-1)$-connected and $3(n-r)-1 > 2(n-r)-1$ (as $n-r \geq 2$), we see that $\Omega \cof_{\A^1}(f_{r-1}^{n-1})$ is $(2(n-r)-1)$-connected as well, or, equivalently, that $\cof_{\A^1}(f_{r-1}^{n-1})$ is $2(n-r)$-connected. 
  
  Next, we aim to apply Lemma \ref{lem:B-MApp1} to the map $g$. Before doing so, we must verify that $g$ is $-1$-connected (i.e., that $g$ induces an epimorphism on $\pi_0(-)$). To this end, it suffices to check that $\cof_{\A^1}(f_r^n)$ is connected. The space $\Sigma^{1,1} \tilde{\P}_{n-r}^{n-1}$ is $\A^1$-$(n-r-1)$-connected, and $V_r(\A^n)$ is $\A^1$-$(n-r-1)$-connected \cite[Proposition 3.2]{Gant2025}. Since $n-r \geq 2$, the map $f_r^n$ is (at least) $\A^1$-connected. Proposition \ref{prop:MotB-M} applied to $f_r^n$, noting that $\fib_{\A^1}(f_r^n)$ is connected, tells us that $\Omega \cof_{\A^1}(f_r^n)$ is connected which implies that $\cof_{\A^1}(f_r^n)$ is connected as well. 
  
  We may now apply Lemma \ref{lem:B-MApp1} to the map $g$. The space $\Sigma^{2n-1,n} V_{r-1}(\A^{n-1})$ is $\A^1$-$(2n-r-2)$-connected by Lemma \ref{lem:SmashConn} and \cite[Proposition 3.2]{Gant2025}. Lemma \ref{lem:B-MApp1} applied to the map $g$ asserts that $g$ is $(2n-r-3)$-connected. Since $\cof_{\A^1}(f_{r-1}^{n-1})$ is $2(n-r)$-connected and $r \ge 2$ (so that $2n-r-2 \ge 2(n-r)$), we have that $\cof_{\A^1}(f_r^n)$ is $2(n-r)$-connected as well. Lemma \ref{lem:B-MApp1} now applied to the map $f_r^n$, which applies since $\Sigma^{1,1} \tilde{\P}_{n-r}^{n-1}$ is $\A^1$-simply connected and $f_r^n$ is $\A^1$-connected, establishes the result.  
\end{proof}
\begin{remark}\label{rem:Sphere}
  When $n > 1$ there is an $\A^1$-equivalence $V_1(\A^{n-1})) \homeq S^{2n-3,n-1}$. It follows from \eqref{i:a} and the inductive step when $r=2$ that the $\A^1$-cofibre of $f_2^n: \Sigma^{1,1} \tilde{\P}^{n-1}_{n-2} \to V_2(\A^n)$ is $\A^1$-equivalent to the motivic sphere $S^{4n-4,2n-1}$.
\end{remark}

\section{A stable splitting of \texorpdfstring{$V_2(\A^n)$}{V2(An)}}\label{sec:Splitting}

This section aims to show that the map 
\[
  f_2^n: \Sigma^{1,1} \tilde{\P}^{n-1}_{n-2} \to V_2(\A^n)
\]
has a retract in $\cat{SH}(k)$. The argument given here is an adaptation of I. M.~James's original argument in \cite{James1959}, which establishes the analogous result for the Stiefel manifolds in topology.

\subsection{The intrinsic join in motivic homotopy theory} 

A tool we will use to construct a stable retract of $f_2^n$ is a motivic version of James's intrinsic join, which we now describe. Recall that the join of $\sh{X}$ and $\sh{Y}$, denoted $\sh{X}*\sh{Y}$, is the pushout of the span
\[
  \begin{tikzcd}
    \sh{X} & \sh{X} \times \sh{Y} \lar["\pr_1"'] \rar["\pr_2"] & \sh{Y}.
  \end{tikzcd}
\]
Fix nonnegative integers $r,n,s,m$ with $r \le n$ and $s \le m$. Let $V_{r|s}'(\A^{n|m}) \subseteq \Mat_{r\times n} \times \Mat_{s\times m}$ denote the union of the two open subschemes $V_r'(\A^n) \times \Mat_{s \times m}$ and $\Mat_{r \times n} \times V_s'(\A^m)$, whose intersection is $V_r'(\A^n) \times V_s'(\A^m)$. This Zariski cover gives rise to a pushout square
\begin{equation}\label{eq:StiefelJoin}
  \begin{tikzcd}
    V_r'(\A^n) \times V_s'(\A^m) \rar \dar & V_r'(\A^n) \times \Mat_{s \times m} \dar \\
    \Mat_{r \times n} \times V_s'(\A^m) \rar & V_{r|s}'(\A^{n|m})
  \end{tikzcd}
\end{equation}
so the $k$-scheme $V_{r|s}'(\A^{n|m})$ is a model for the join $V_r(\A^n)*V_s(\A^m)$. When $r=s$, there is an obvious inclusion $V_{r|r}'(\A^{n|m}) \hookrightarrow V_r'(\A^{n+m})$, so we obtain a map $h: V_r(\A^n)*V_s(\A^m) \to V_r(\A^{n+m})$, well-defined up to homotopy, which we call the \emph{intrinsic join}. 

\begin{remark}\label{rem:hr=1}
  In the case $r=1$, the inclusion $V_{1|1}'(\A^{n|m}) \hookrightarrow V_1'(\A^{n+m}) = \A^{n+m} \sm \bar{0}$ is an equality of schemes, so $h$ is an $\A^1$-equivalence in this case. 
\end{remark}
\begin{remark}\label{rem:JoinCohoDim}
  The $k$-scheme $V_{r|s}'(\A^{n|m})$ has Krull dimension $rn+sm$, so the join $V_r(\A^n)*V_s(\A^m)$ has Nisnevich cohomological dimension at most $rn+sm$. 
\end{remark}

\subsection{The intrinsic join and motivic cohomology}
%Let $\m$ denote the motivic cohomology ring $\h^{*,*}(\Spec k,\Z)$. When $X,Y$ are pointed motivic spaces, whose reduced motivic cohomology rings are free and finitely generated modules over $\m$, the reduced motivic cohomology of the join $X*Y$ can be described in terms of the reduced motivic cohomology of $X$ and $Y$ using the K\"unneth formula \cite[Theorem 8.6]{Dugger2005} (see also \cite[Remark 8.7]{Dugger2005}). Specifically, the $\A^1$-equivalence $X*Y \homeq \Sigma^{1,0} (X \Smash Y)$ together with the K\"unneth formula for motivic cohomology provide an isomorphism of $\m$-modules
%\[
%  \tilde{\h}{}^{*,*}(X*Y,\Z) \iso \left( \tilde{\h}{}^{*,*}(X,\Z) \otimes_\m \tilde{\h}{}^{*,*}(Y,\Z) \right)[1]
%\]
%where the notation $[1]$ on right-hand side denotes the $(1,0)$-shift of the bigraded module $\tilde{\h}{}^{*,*}(X,\Z) \otimes_\m \tilde{\h}{}^{*,*}(Y,\Z)$. If we fix decompositions 
%\[
%  \tilde{\h}{}^{*,*}(X,\Z) = \bigoplus_{i=1}^n \alpha_i \m , \qquad \tilde{\h}{}^{*,*}(X,\Z) = \bigoplus_{j=1}^m \beta_i \m
%\]
%as free modules, where $\alpha_i,\beta_j$ are generators in bidegrees $(p_i,q_i),(p_j,q_j)$, respectively, then the $\m$-module $\tilde{\h}{}^{*,*}(X*Y,\Z)$ is free with generators $\alpha_i \otimes \beta_j[1]$ in bidegrees $(p_i+p_j+1,q_i+q_j)$.

The motivic cohomology of the Stiefel varieties $V_r(\A^n)$ is calculated in \cite{Williams2012}. Let $\m$ denote the motivic cohomology ring $\h^{*,*}(\Spec k,\Z)$. There is a presentation 
\begin{equation}\label{eq:CohoPres}
  \h^{*,*}(V_r(\A^n),\Z) = \m[\alpha_{n-r+1}, \dots, \alpha_n]/ \mathfrak{a}, \quad |\alpha_i| = (2i-1,i)
\end{equation}
where $\mathfrak{a}$ is the ideal generated by the elements $\alpha_i^2 - \{-1\}\alpha_{2i-1}$ for each $i$ with $2i-1 \leq n$, as well as the relations imposed by motivic cohomology being graded commutative in the first grading and commutative in the second. Here, $\{-1\}$ is the element $-1 \in \m^{1,1} = k^\times$. Note that $\h^{*,*}(V_r(\A^n),\Z)$ is free and finitely generated as a module over $\m$, generated by the products
\[
  \alpha_{i_1} \cdots \alpha_{i_\ell}, \qquad n-r+1 \leq i_1 < \cdots < i_{\ell} \leq n.
\]
We abuse notation and also denote the image of the algebra generators in $\tilde{\h}{}^{*,*}(V_r(\A^n),\Z)$ by $\alpha_i$.

The motivic cohomology of the join $V_r(\A^n)*V_s(\A^m)$ may be calculated as a module over $\m$ using the K\"unneth formula for motivic cohomology. Specifically, the natural $\A^1$-equivalence 
\[ V_r(\A^n) * V_s(\A^m) \homeq \Sigma^{1,0} (V_r(\A^n) \Smash V_s(\A^m)) \]
together with the K\"unneth formula \cite[Theorem 8.6]{Dugger2005} provides an isomorphism of $\m$-modules
\[
  \tilde{\h}{}^{*,*}(V_r(\A^n)*V_s(\A^m),\Z) \cong \left( \tilde{\h}{}^{*,*}(V_r(\A^n),\Z) \otimes_{\m} \tilde{\h}{}^{*,*}(V_s(\A^m),\Z)\right) [1],
\]
where the notation $N[1]$ denotes the $(1,0)$-shift of the bigraded module $N$. In particular, $\tilde{\h}{}^{*,*}(V_r(\A^n)*V_s(\A^m),\Z)$ is a free and finitely generated $\m$-module. If we fix presentations
\[
  \h^{*,*}(V_r(\A^n),\Z) = \m[\beta_{n-r+1}, \dots, \beta_n]/ \mathfrak{a}, \quad \h^{*,*}(V_s(\A^m),\Z) = \m[\gamma_{m-s+1}, \dots, \gamma_m]/\mathfrak{b}
\]
as in \eqref{eq:CohoPres}, then a subset of the $\m$-module generators of $\tilde{\h}{}^{*,*}(V_r(\A^n)*V_s(\A^m),\Z)$ are given by the elements $\beta_i \otimes \gamma_j[1]$ in bidegree $(2(i+j)-1,i+j)$, under the above K\"unneth isomorphism.

We need a technical lemma before we compute the map in motivic cohomology induced by $h$.
\begin{lemma}\label{lem:HoComm}
  The following hold:
  \begin{enumerate}
    \item The diagram 
    \[
      \begin{tikzcd}
        V_r(\A^n)*V_{r-1}(\A^{m-1}) \rar["p*\id"] \ar[dd,"\id*i"] & V_{r-1}(\A^n)*V_{r-1}(\A^{m-1}) \dar["h"] \\
        & V_{r-1}(\A^{n+m-1}) \dar["i"] \\
        V_r(\A^n)*V_r(\A^m) \rar["h"] & V_r(\A^{n+m})
      \end{tikzcd}
    \]
    is homotopy commutative.
    \item
    When $m$ is even, the diagram
    \[
      \begin{tikzcd}
        V_{r-1}(\A^{n-1})*V_r(\A^{m}) \rar["\id*p"] \ar[dd,"i*\id"] & V_{r-1}(\A^{n-1})*V_{r-1}(\A^{m}) \dar["h"] \\
        & V_{r-1}(\A^{n+m-1}) \dar["i"] \\
        V_r(\A^n)*V_r(\A^m) \rar["h"] & V_r(\A^{n+m})
      \end{tikzcd}
    \]
    is homotopy commutative.
  \end{enumerate}
\end{lemma}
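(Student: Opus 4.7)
My strategy is to unpack both composites at the level of $R$-points using the Zariski pushout model $V'_{r|s}(\A^{n|m})$ of the join, identify the explicit difference between them, and then construct the required $\A^1$-homotopy in stages.

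For part (1), a direct matrix computation shows that on $R$-points the two composites produce $r \times (n+m)$ matrices $M_1 = [A \mid iB]$ and $M_2 = i[pA \mid B]$ which agree in the middle $m-1$ columns and in the last column, but differ in the first $n$ columns: $M_1$ has $A$ itself, while $M_2$ has the matrix obtained from $A$ by shifting rows up by one with zeros in the bottom row. I would construct an $\A^1$-homotopy between $M_1$ and $M_2$ in two stages. First, the family $H(t)$ obtained from $M_1$ by scaling only the bottom row of its A-block by $1-t$ connects $M_1$ to the intermediate matrix $M_1' = i[A''\mid B]$, where $A''$ denotes the first $r-1$ rows of $A$. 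Second, the family $K(t) = i[M'(t) A \mid B]$ connects $M_1'$ to $M_2 = i[pA\mid B]$, where $M'(t) \in \Mat_{(r-1)\times r}(\Z[t])$ is the upper bidiagonal matrix with $1-t$ on the diagonal and $t$ on the superdiagonal, so that $M'(0) A = A''$ and $M'(1) A = pA$.

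The main technical obstacle is verifying that $H(t)$ and $K(t)$ land in $V'_r(\A^{n+m})$ on both pieces of the pushout cover. On the piece $V'_r(\A^n) \times \Mat_{(r-1)\times(m-1)}$, a right inverse $C_A$ of $A$ yields an explicit right inverse of $H(t)$ by adding a $t$-dependent correction in the final column, and a polynomial right inverse $N(t) \in \Mat_{r \times (r-1)}(\Z[t])$ of $M'(t)$ (which exists because the ideal of maximal minors of $M'(t)$ in $\Z[t]$ is the unit ideal, via the binomial identity $\sum_k \binom{r-1}{k}(1-t)^{r-1-k}t^k = 1$) then gives $C_A N(t)$ as a right inverse of $M'(t) A$. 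On the other piece $\Mat_{r \times n} \times V'_{r-1}(\A^{m-1})$, a right inverse of $B$ gives a right inverse of $H(t)$ and $K(t)$ independently of $A$ and $t$. Compatibility on the overlap is automatic because the defining formulas for $H(t), K(t)$ do not depend on the piece; only the rank verification differs.

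For part (2), the analogous computation produces matrices differing in two ways: the ``distinguished column'' (carrying the $1$ coming from the inclusion $i$) sits at column $n$ in the first composite and at column $n+m$ in the second, and the $B$-block exhibits a row shift analogous to the $A$-block shift of part (1). I would first apply the cyclic column permutation sending column $n$ to column $n+m$, which is right-multiplication by a permutation matrix of sign $(-1)^m$. When $m$ is even, this permutation lies in $\SL_{n+m}$, which for $n+m \ge 3$ coincides with the subgroup generated by elementary matrices (by Suslin's stability theorem), each of which is $\A^1$-homotopic to the identity via the linear family $t \mapsto I + t\alpha E_{ij}$; hence the column permutation acts $\A^1$-homotopically trivially on $V'_r(\A^{n+m})$. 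After this permutation, the remaining discrepancy is a $B$-block row shift of exactly the form handled in part (1) (with the roles of the $A$- and $B$-blocks interchanged), and the same two-stage construction applies. The parity hypothesis on $m$ is precisely what is needed for the initial column permutation to be $\A^1$-nullhomotopic.
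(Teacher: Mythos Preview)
Your approach is essentially the paper's: both arguments pass to the scheme models $V'_{r|s}(\A^{n|m})$ for the join and write down explicit naive $\A^1$-homotopies between the two composites, verifying at each stage that the intermediate matrices remain full rank on each piece of the Zariski cover.

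The differences are minor. For part~(1), the paper gets by with a single homotopy: it simply scales the extra row of the $A$-block linearly to zero, which already lands on the clockwise composite. You insert a second stage $K(t)=i[M'(t)A\mid B]$ to interpolate between the first $r-1$ rows of $A$ and the last $r-1$ rows; this is needed under the convention that $p$ forgets the \emph{first} row (as stated earlier in the paper), whereas the paper's one-step homotopy implicitly uses the opposite convention. Your treatment of $K(t)$ is correct and the argument that $M'(t)$ has a right inverse over $\Z[t]$ via the binomial identity is a clean way to check full rank on the piece where only $A$ is unimodular. For part~(2), the paper does the row-scaling first and then observes that moving the distinguished column from position $n$ to position $n+m$ is an even number of transpositions when $m$ is even, hence $\A^1$-nullhomotopic; you do the column permutation first and justify it by noting that the permutation matrix lies in $\SL_{n+m}=E_{n+m}$ and that each elementary factor is $\A^1$-homotopic to the identity. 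Invoking Suslin here is more than you need over a field, but harmless. In both parts the substance is the same; your version is slightly more elaborate but also more robust to the bookkeeping of which row $p$ drops.
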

\begin{proof}
  For the first part, it suffices to show that the two maps $V_{r|r-1}'(\A^{n|m-1}) \to V_r'(\A^{n+m})$ in the diagram of $k$-schemes 
  \begin{equation}\label{eq:HoCommModel1}
    \begin{tikzcd}
      V_{r|r-1}'(\A^{n|m-1}) \rar["p' \times \id"] \ar[dd,"\id \times i'"] & V_{r-1|r-1}'(\A^{n|m-1}) \dar[hook] \\
      & V_{r-1}'(\A^{n+m-1}) \dar["i'"] \\
      V_{r|r}'(\A^{n+m}) \rar[hook] & V_r'(\A^{n+m})
    \end{tikzcd}
  \end{equation}
  are naively $\A^1$-homotopic. On $R$-points, the lower composition sends a pair of matrices $(A,B)$ in $V_{r|r-1}'(\A^{n|m-1})(R)$ (so $A \in \Mat_{r \times n}(R)$ and $B \in \Mat_{r-1 \times m-1}(R)$) to the matrix
  \[
    \left[ 
    \begin{array}{ccc|ccc|c}
      & & & & & & 0 \\
      & \raisebox{-2ex}{$A$} & & & B & & \vdots \\
      & & & & & & 0 \\ \cline{4-7}
      & & & 0 & \cdots & 0 & 1
    \end{array}
    \right].
  \]
  If $a_{i,j}$ is the $(i,j)$-entry of $A$, then the map
  \begin{align*}
    V_{r|r-1}'(\A^{n|m-1}) \times \A^1 &\to V_r'(\A^{n+m}) \\
    ((A,B), t) &\mapsto 
    \left[ 
    \begin{array}{ccc|ccc|c}
      & & & & & & 0 \\
      & p'(A) & & & B & & \vdots \\
      & & & & & & 0 \\ \hline
      ta_{r,1}& \cdots & ta_{r,n} & 0 & \cdots & 0 & 1
    \end{array}
    \right]
  \end{align*}
  provides a naive $\A^1$-homotopy from the counterclockwise composition to the clockwise composition in \eqref{eq:HoCommModel1}.
  
  Similarly, for the second part, we wish to show that the two maps $V_{r-1|r}'(\A^{n-1|m}) \to V_r'(\A^{n+m})$ in the diagram of $k$-schemes
  \begin{equation}\label{eq:HoCommModel2}
    \begin{tikzcd}
      V_{r-1|r}'(\A^{n-1|m}) \rar["\id \times p'"] \ar[dd,"i' \times \id"] & V_{r-1|r-1}'(\A^{n-1|m}) \dar[hook] \\
      & V_{r-1}'(\A^{n+m-1}) \dar["i'"] \\
      V_{r|r}'(\A^{m|n}) \rar[hook] & V_r'(\A^{n+m})
    \end{tikzcd}
  \end{equation}
  are naively $\A^1$-homotopic. On $R$-points, the bottom composition sends a pair of matrices $(A,B)$ in $V_{r-1|r}'(\A^{n-1|m})(R)$ to the block matrix
  \[
    \left[
    \begin{array}{ccc|c|ccc}
      & & & 0 & & &\\
      & A & & \vdots & & \raisebox{-2ex}{$B$} &\\
      & & & 0 & & & \\ \cline{1-4}
      0 & \cdots & 0 & 1 & & &
    \end{array}
    \right].
  \]
  If $b_{ij}$ is the $(i,j)$-entry of $B$, then the map
  \begin{align*}
    V_{r-1|r}'(\A^{n-1|m}) \times \A^1 &\to V_r'(\A^{n+m}) \\
    ((A,B),t) &\mapsto 
    \left[
    \begin{array}{ccc|c|ccc}
      & & & 0 & & &\\
      & A & & \vdots & & p'(B) &\\
      & & & 0 & & & \\ \hline
      0 & \cdots & 0 & 1 & tb_{r,1} & \cdots & tb_{r,m}
    \end{array}
    \right]
  \end{align*}
  provides a naive $\A^1$-homotopy from the counterclockwise composition in \eqref{eq:HoCommModel2} to the map 
  \[
    g: (A,B) \mapsto 
    \left[
    \begin{array}{ccc|c|ccc}
      & & & 0 & & &\\
      & A & & \vdots & & p'(B) &\\
      & & & 0 & & & \\ \hline
      0 & \cdots & 0 & 1 & 0 & \cdots & 0
    \end{array}
    \right].
  \]
  Since $m$ is even, an even number of column transpositions provides a naive $\A^1$-homotopy from $g$ to the map
  \[
    (A,B) \mapsto 
    \left[
    \begin{array}{ccc|ccc|c}
      & & & & & & 0\\
      & A & & & p'(B) & & \vdots\\
      & & & & & & 0\\ \hline
      0 & \cdots & 0 & 0 & \cdots & 0 & 1
    \end{array}
    \right],
  \]
  which is the clockwise composition in \eqref{eq:HoCommModel2}.
\end{proof}
In what follows, let $I = \{n-r+1, \dots, n\}$ and $J = \{m-r+1, \dots, m\}$.
\begin{lemma}\label{lem:hCoho}
  Let $m$ be even. For each $\ell = n+m-r+1, \dots, n+m$, the intrinsic join 
  \[
    h: V_r(\A^n)*V_r(\A^m) \to V_r(\A^{n+m})
  \]
  induces the map in motivic cohomology 
  \begin{align*}
    h^*:\tilde{\h}{}^{2\ell-1,\ell}(V_r(\A^{n+m}),\Z) &\to \tilde{\h}{}^{2\ell-1,\ell}(V_r(\A^n)*V_r(\A^m),\Z) \\
    \alpha_{\ell} &\mapsto \sum_{\substack{(i,j) \in I \times J \\ i+j=\ell}} \pm \beta_i \otimes \gamma_j[1].
  \end{align*}
\end{lemma}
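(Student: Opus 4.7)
The plan is to induct on $r$. The base case $r = 1$ is immediate from \Cref{rem:hr=1}: the intrinsic join is an $\A^1$-equivalence, and the group $\tilde{\h}{}^{2(n+m)-1, n+m}$ is a free $\m$-module of rank one on both sides (generated by $\alpha_{n+m}$ and $\beta_n \otimes \gamma_m[1]$, respectively), so $h^*(\alpha_{n+m}) = \pm \beta_n \otimes \gamma_m[1]$.

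For the inductive step, fix $r \ge 2$ and $\ell$ in the claimed range, and write $h'$ for the intrinsic join at rank $r-1$ appearing in the top row of each diagram of \Cref{lem:HoComm}. Taking motivic cohomology in those diagrams produces the relations
\[
(\id * i)^* \circ h^* = (p * \id)^* \circ h'{}^* \circ i^*, \qquad (i * \id)^* \circ h^* = (\id * p)^* \circ h'{}^* \circ i^*.
\]
For $\ell \le n + m - 1$, the inclusion $i \colon V_{r-1}(\A^{n+m-1}) \to V_r(\A^{n+m})$ satisfies $i^*(\alpha_\ell) = \alpha_\ell$, so the inductive hypothesis expands each right-hand side into an explicit sum of K\"unneth basis elements with $\pm 1$ coefficients. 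Since $(\id * i)^*$ kills exactly the K\"unneth basis elements with $j = m$ and is the identity on the rest, while $(i * \id)^*$ does the analogous thing for the $i = n$ slice, the two identities jointly determine the coefficient of $\beta_i \otimes \gamma_j[1]$ in $h^*(\alpha_\ell)$ for every pair $(i, j) \in I \times J$ with $i + j = \ell$ apart from the corner $(n, m)$. A short combinatorial check using $\ell \ge n + m - r + 1$ confirms that for each such $(i, j) \ne (n, m)$ the basis element in question actually appears in the inductive formula, so the coefficient is $\pm 1$ rather than $0$.

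The remaining case is $\ell = n + m$, for which the only K\"unneth basis element in the target bidegree is $\beta_n \otimes \gamma_m[1]$, so $h^*(\alpha_{n+m}) = c \cdot \beta_n \otimes \gamma_m[1]$ for some $c \in \Z$. To force $c = \pm 1$, I would exploit the naturality square
\[
\begin{tikzcd}
V_r(\A^n) * V_r(\A^m) \rar["h"] \dar["p * p"'] & V_r(\A^{n+m}) \dar["p"] \\
V_1(\A^n) * V_1(\A^m) \rar["h"'] & V_1(\A^{n+m})
\end{tikzcd}
\]
(commutative because matrix concatenation commutes with extracting the first row). Taking cohomology, applying the base case to the bottom row, and noting that $(p * p)^*(\beta_n \otimes \gamma_m[1]) = \beta_n \otimes \gamma_m[1]$ yields $c = \pm 1$.

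The main obstacle is precisely this top-degree case $\ell = n + m$: both diagrams of \Cref{lem:HoComm} degenerate to $0 = 0$ in the relevant bidegree and give no direct information about the top coefficient, which is why the auxiliary naturality argument against $V_1(\A^\bullet)$ is needed. A secondary technical nuisance is that \Cref{lem:HoComm}(1) substitutes $m \mapsto m - 1$, reversing parity and threatening a clean reinvocation of the inductive hypothesis; I would handle this either by proving the inductive statement without the parity hypothesis up to a global sign (which the $\pm$ absorbs) or by carefully tracking the column transpositions used in the proof of \Cref{lem:HoComm}(2) to keep signs under control throughout the induction.
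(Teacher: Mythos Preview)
Your proposal is correct and follows essentially the same approach as the paper: induction on $r$, the $p*p$ square against $V_1(\A^\bullet)$ to handle the top degree $\ell=n+m$, and the two diagrams of \Cref{lem:HoComm} to pin down all remaining coefficients via the inductive hypothesis. The parity concern you flag---that \Cref{lem:HoComm}(1) feeds the induction an instance with odd second factor---is present in the paper's proof as well (the author's own marginal note asks to double-check it); your suggested fix of strengthening the inductive statement to drop the parity hypothesis, absorbing any resulting sign into the $\pm$, is the natural way to close this gap.
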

\begin{proof}
  For the case $\ell=n+m$, consider the homotopy commutative diagram
  \[
    \begin{tikzcd}
      V_r(\A^n)*V_r(\A^m) \dar["p*p"] \rar["h"] & V_r(\A^{n+m}) \dar["p"] \\
      V_1(\A^n)*V_1(\A^m) \rar["h","\weq"'] & V_1(\A^{n+m})
    \end{tikzcd}
  \]  
  whereby, after applying $\tilde{\h}{}^{2(n+m)-1,n+m}(-,\Z)$, all four motivic cohomology groups are free abelian of rank 1. Indeed, for the Stiefel varieties this follows from \cite[Theorem 19]{Williams2012}, and for the joins this follows from the discussion preceding Lemma \ref{lem:HoComm}. We chase the generator $\alpha_{n+m} \in \tilde{\h}{}^{2(n+m)-1,n+m}(V_1(\A^{n+m}),\Z)$ around the induced diagram in cohomology. The element $h^*(\alpha_{n+m})$ is a generator of 
  \[  
    \tilde{\h}{}^{2(n+m)-1,n+m}(V_1(\A^n)\otimes V_1(\A^m),\Z) = \Z \cdot \beta_n \otimes \gamma_m[1],
  \]
  since the lower intrinsic join is an $\A^1$-equivalence (Remark \ref{rem:hr=1}). We also have $p^*(\alpha_{n+m}) = \alpha_{n+m}$ under the map $p: V_r(\A^{n+m}) \to V_1(\A^{n+m})$ by \cite[Proposition 7]{Williams2012}. Similarly, $(p*p)^*(\beta_n \otimes \gamma_m[1]) = \beta_n \otimes \gamma_m[1]$. The result follows.

  When $\ell < n+m$, we prove the lemma by induction on $r$. The case $r=1$ is vacuously true. For the inductive step, let $r > 1$ and consider the homotopy commutative diagram
  \[
    \begin{tikzcd}
      V_{r-1}(\A^{n-1})*V_r(\A^m) \rar["\id*p"] \ar[dd,"i*\id"] & V_{r-1}(\A^{n-1})*V_{r-1}(\A^m) \dar["h"] \\
      & V_{r-1}(\A^{n+m-1}) \dar["i"] \\
      V_r(\A^n)*V_r(\A^m) \rar["h"] & V_r(\A^{n+m})
    \end{tikzcd}
  \]
  of Lemma \ref{lem:HoComm}. Since the group $\tilde{\h}{}^{2\ell-1,\ell}(V_r(\A^n)*V_r(\A^m),\Z)$ is free abelian, generated by the classes $\beta_i\otimes \gamma_j[1]$ such that $(i,j) \in I \times J$ and  $i+j=\ell$, we may write
  \[
    h^*(\alpha_\ell) = \sum_{\substack{(i,j) \in I \times J \\ i+j = \ell}} a_{i,j} \beta_i \otimes \gamma_j[1]
  \]
  for some integers $a_{i,j}$. Since $\ell < n+m$, we have $i^*(\alpha_\ell) = \alpha_\ell \in \tilde{\h}{}^{2\ell-1,\ell}(V_{r-1}(\A^{n+m-1}),\Z)$ by \cite[Proposition 8]{Williams2012}. The inductive hypothesis ensures that the map 
  \[
    h: V_{r-1}(\A^{n-1})*V_{r-1}(\A^m) \to V_{r-1}(\A^{n+m-1})
  \]
  induces the map in motivic cohomology that sends $\alpha_\ell$ to
  \[
    \sum_{\substack{(i,j) \in I' \times J' \\ i+j = \ell}} \pm \beta_i \otimes \gamma_j[1] \in \tilde{\h}{}^{2\ell-1,\ell}(V_{r-1}(\A^{n-1})*V_{r-1}(\A^m),\Z)
  \]
  where $I' = \{n-r+1, \dots, n-1\}$ and $J' = \{m-r+2, \dots, m\}$. Pulling back this class under $\id \otimes p$ and again using \cite[Proposition 7]{Williams2012} we obtain the class of the same name
  \[
    \sum_{\substack{(i,j) \in I' \times J' \\ i+j = \ell}} \pm \beta_i \otimes \gamma_j[1] \in \tilde{\h}{}^{2\ell-1,\ell} (V_{r-1}(\A^{n-1})*V_r(\A^m),\Z).
  \]
  We have equalities
  \[
    \sum_{\substack{(i,j) \in I' \times J' \\ i+j = \ell}} \pm \beta_i \otimes \gamma_j[1] = (i *\id)^* \Bigg( \sum_{\substack{(i,j) \in I \times J \\ i+j = \ell}} a_{i,j} \beta_i \otimes \gamma_j[1] \Bigg) = \sum_{\substack{(i,j) \in I' \times J \\ i+j=\ell}} a_{i,j} \beta_i \otimes \gamma_j[1].
  \]
  where the left equality follows from the homotopy commutativity of the diagram in question, and the right equality follows from \cite[Proposition 8]{Williams2012}. The two index sets 
  \[
    \{(i,j) \in I' \times J' \mid i+j = \ell \}, \qquad \{(i,j) \in I' \times J \mid i+j = \ell \}
  \]
  are equal since $\ell < n+m$, and we conclude that $a_{i,j} = \pm 1$ whenever $(i,j) \in I' \times J$ satisfy $i+j = \ell$. A symmetric argument using the first part of Lemma \ref{lem:HoComm} shows that $a_{n,\ell-n} = \pm 1$ as well, concluding the proof.
\end{proof}

\subsection{The stable splitting}

Given a pointed motivic space $(\sh{X},x)$, we let $M_{\red}(\sh{X})$ denote the reduced motive of $\sh{X}$: the cone of the morphism $x_*: M(\Spec k) \to M(\sh{X})$. 

\begin{proposition}\label{prop:StableSplit}
  Let $k$ be a perfect field of finite $2$-\'{e}tale cohomological dimension, and let $r \geq 2$ be an integer. Suppose a section of $p: V_r(\A^m) \to V_1(\A^m)$ exists (over $k$) for infinitely many even integers $m$. Then for any $n \geq r$, the map 
  \[
    f_r^n: \Sigma^{1,1} \tilde{\P}^{n-1}_{n-r} \to V_r(\A^n)
  \]
  has a retract in $\cat{SH}(k)$.
\end{proposition}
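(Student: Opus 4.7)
The strategy is a motivic adaptation of I. M.~James's topological argument in \cite{James1959}, using the intrinsic join as a substitute for ``multiplication'' on Stiefel varieties.

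For each $m$ in our infinite set $S$ of good even integers, the section $\sigma_m\colon V_1(\A^m) \to V_r(\A^m)$ combined with the intrinsic join $h$ of \eqref{eq:StiefelJoin} yields a map
\[
  \mu_m\colon V_r(\A^n) * V_1(\A^m) \xrar{\id * \sigma_m} V_r(\A^n) * V_r(\A^m) \xrar{h} V_r(\A^{n+m}),
\]
which, under the equivalence $V_r(\A^n) * V_1(\A^m) \homeq \Sigma^{2m,m} V_r(\A^n)$ (using $V_1(\A^m) \homeq S^{2m-1,m}$ and $X * Y \homeq \Sigma(X \Smash Y)$ for pointed spaces), becomes a stable map $\mu_m\colon \Sigma^{2m,m} V_r(\A^n) \to V_r(\A^{n+m})$ in $\cat{SH}(k)$. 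The next task is to match $\mu_m$ with the comparison maps: I would construct, by analogous means, a map $\tilde{\mu}_m\colon \Sigma^{2m,m}\Sigma^{1,1}\tilde{\P}^{n-1}_{n-r} \to \Sigma^{1,1}\tilde{\P}^{n+m-1}_{n+m-r}$ on stunted projective spaces (essentially, smashing with the top cell of the larger space) and verify the homotopy commutativity of
\[
  \begin{tikzcd}
    \Sigma^{2m,m}\Sigma^{1,1}\tilde{\P}^{n-1}_{n-r} \dar["\Sigma^{2m,m} f_r^n"'] \rar["\tilde{\mu}_m"] & \Sigma^{1,1}\tilde{\P}^{n+m-1}_{n+m-r} \dar["f_r^{n+m}"] \\
    \Sigma^{2m,m} V_r(\A^n) \rar["\mu_m"] & V_r(\A^{n+m}).
  \end{tikzcd}
\]
The cohomology calculation \Cref{lem:hCoho} verifies this compatibility on motivic cohomology, and a Moore--Postnikov lifting argument via \Cref{lem:CellApprox}, whose hypothesis of bounded Nisnevich cohomological dimension is ensured by the finite $2$-\'etale cohomological dimension of $k$, promotes this to a genuine homotopy.

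To extract the retract, one exploits this compatibility: the family $\{\mu_m\}_{m \in S}$ yields, upon passing to cofibres, stable maps $\cof_{\A^1}(f_r^n) \to \cof_{\A^1}(f_r^{n+m})$. For $r = 2$, where $\cof_{\A^1}(f_2^n) \homeq S^{4n-4,2n-1}$ by \Cref{rem:Sphere}, the problem reduces to exhibiting a stable map $S^{4n-4,2n-1} \to V_2(\A^n)$ splitting the cofibre projection $q$; such a map is extracted as an adjoint of $\mu_m$ for suitably chosen $m \in S$, together with the standard inclusion $V_2(\A^n) \to V_2(\A^{n+m})$. For $r \ge 3$, one proceeds by induction on $r$ using the iterated cofibre structure visible in \eqref{eq:InductiveStep}, feeding in the same compatibility square at each stage. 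The main technical obstacle is the compatibility step, in which one must carefully balance the connectivity estimates of \Cref{prop:frnConnectivity} against the cohomological dimension bounds to lift the cohomological equality to an honest homotopy in $\cat{H}(k)_*$; this is the unique place in the argument where the hypothesis of finite $2$-\'etale cohomological dimension enters in an essential way.
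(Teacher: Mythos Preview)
Your overall shape---use the intrinsic join together with the section $\sigma_m$ to build $\mu_m$, and then lift to the stunted projective spaces---matches the paper. But two steps are misplaced, and the endgame you propose is both unnecessary and not clearly workable.

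First, the bookkeeping on hypotheses is inverted. The lifting step (\Cref{lem:CellApprox}) needs a bound on the Nisnevich cohomological dimension of the \emph{source}, and this comes from the fact that the join $V_r(\A^n)*V_1(\A^m)$ has a smooth scheme model $V'_{r|1}(\A^{n|m})$ of Krull dimension $rn+m$ (\Cref{rem:JoinCohoDim}); it has nothing to do with the $2$-\'etale cohomological dimension of $k$. The finite $2$-\'etale cohomological dimension enters at a different, and crucial, point that your plan omits entirely: Bachmann's conservativity theorem \cite[Theorem 16]{Bachmann2018}, which says that a map of compact objects in $\cat{SH}(k)$ is an equivalence as soon as it is one on motives. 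This is what allows the motivic cohomology computation (\Cref{lem:hCoho}) to be promoted to a genuine stable equivalence.

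Second, the map $\tilde{\mu}_m$ is not built ``by analogous means'' on stunted projective spaces; it is produced by lifting the composite $\mu_m \circ (f_r^n * \id)$ through $f_r^{n+m}$, using that $f_r^{n+m}$ is $\A^1$-$(2(n+m-r)-1)$-connected (\Cref{prop:frnConnectivity}) and that $rn+m \le 2(n+m-r)$ once $m$ is large enough. Call this lift $\tilde\phi$. The motivic cohomology calculation then shows $\tilde\phi$ induces an isomorphism on $\tilde\h^{2\ell-1,\ell}$ for all $\ell$, hence on motives, hence (by Bachmann) is a stable equivalence. At that point the retract is immediate: denoting by $\tilde\phi'$ the lift $V_r(\A^n)*V_1(\A^m) \to \Sigma^{1,1}\tilde\P^{n+m-1}_{n+m-r}$, the map $\tilde\phi^{-1}\circ\tilde\phi'$ is a stable retraction of $\Sigma^{2m,m} f_r^n$. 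There is no need to pass to cofibres, no case split on $r=2$ versus $r\ge 3$, and no induction; the argument is uniform in $r$.
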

\begin{proof}
  Choose $m$ even so that $m \geq rn+2(r-n)$ and a section of $p: V_r(\A^m) \to V_1(\A^m)$ exists. Let $\phi:V_1(\A^m) \to V_r(\A^m)$ be such a section, and consider the solid diagram
  \begin{equation}\label{eq:Splitting}
    \begin{tikzcd}[column sep = large]
       \Sigma^{1,1} \tilde{\P}^{n-1}_{n-r} * V_1(\A^m) \ar[rr,"\tilde{\phi}",dashed,start anchor = {[yshift=.5ex]},end anchor = {[yshift=.5ex]}] \dar["f_r^n * \id"] & & \Sigma^{1,1} \tilde{\P}^{n+m-1}_{n+m-r} \dar["f_r^{n+m}"] \\
       V_r(\A^n) * V_1(\A^m) \ar[urr,dashed,"\tilde{\phi}'",start anchor = {[xshift=-1.5ex]}] \rar["\id * \phi"{yshift=-.4ex}] & V_r(\A^n) * V_r(\A^m) \rar["h"] & V_r(\A^{n+m})\: .
    \end{tikzcd}
  \end{equation}
  The Nisnevich cohomological dimension of $V_r(\A^n)*V_1(\A^m)$ is at most $rn+m$ (Remark \ref{rem:JoinCohoDim}). Since $2(n+m-r) \geq rn+m$, Lemma \ref{lem:CellApprox} and Proposition \ref{prop:frnConnectivity} guarantee that the dashed map $\tilde{\phi}'$ exists (after $\A^1$-localization), making the lower triangle in \eqref{eq:Splitting} commute. Note that there are $\A^1$-equivalences
  \[
    \Sigma^{1,1} \tilde{\P}^{n-1}_{n-r} * V_1(\A^m) \homeq \Sigma^{2m,m} \Sigma^{1,1} \tilde{\P}^{n-1}_{n-r}, \qquad V_r(\A^n) * V_1(\A^m) \homeq \Sigma^{2m,m} V_r(\A^n).
  \] 
  Let $\tilde{\phi}$ denote the composite $\tilde{\phi}' \circ (\id*f_r^n)$. Since $n+m-r-1 \ge 1$, the space $\Sigma^{1,1}\tilde{\P}^{n+m-1}_{n+m-r}$ is $\A^1$-simply connected, and we may assume $\tilde{\phi}, \tilde{\phi}'$ are pointed maps \cite[Proposition 2.1]{Gant2025}. We wish to show that $\tilde{\phi}$ is a stable $\A^1$-equivalence. 
  
  Using the conservativity theorem of \cite[Theorem 16]{Bachmann2018}, it suffices to show that the induced map of motives
  \[
    \tilde{\phi}_*: M_{\red}(\Sigma^{2m,m} \Sigma^{1,1} \tilde{\P}^{n-1}_{n-r}) \to M_{\red}(\Sigma^{1,1} \tilde{\P}^{n+m-1}_{n+m-r})
  \]
  is an isomorphism in $\cat{DM}(k)$. Since the source and target of $\tilde{\phi}_*$ are pure Tate with Tate summands concentrated on the $(2\ell-1, \ell)$-line, it suffices to show that the induced map in motivic cohomology
  \[
    \tilde{\phi}^*: \tilde{\h}{}^{2\ell-1,\ell}(\Sigma^{1,1} \tilde{\P}^{n+m-1}_{n+m-r},\Z) \to \tilde{\h}{}^{2\ell-1,\ell}(\Sigma^{2m,m}\Sigma^{1,1} \tilde{\P}^{n-1}_{n-r}, \Z)
  \]
  is an isomorphism for each integer $\ell$. 
  
  Note that the source and target of $\tilde{\phi}^*$ are either both trivial or both free abelian of rank 1. In the latter case, the element $(f_r^{n+m})^*(\alpha_{\ell})$ is a generator of $\tilde{\h}{}^{2\ell-1,\ell}(\Sigma^{1,1} \tilde{\P}^{n+m-1}_{n+m-r},\Z)$ by the calculation of \cite[Theorem 18]{Williams2012}. We also have 
  \[
    h^*(\alpha_{\ell}) = \sum_{\substack{(i,j) \in I \times J \\ i+j = \ell}} \pm \beta_i \otimes \gamma_j[1] 
  \]
  by Lemma \ref{lem:hCoho}. Since $\phi: V_1(\A^m) \to V_r(\A^m)$ is a section of $p$, we have $\phi^*(\gamma_m) = \gamma_m$. Moreover, $\phi^*(\gamma_i) = 0$ when $i<m$, as $\tilde{\h}{}^{2i-1,i}(V_1(\A^m),\Z)=0$ in this case.
  Then
  \[
    (\id * \phi)^*\Bigg( \sum_{\substack{(i,j) \in I \times J \\ i+j = \ell}} \pm \beta_i \otimes \gamma_j[1] \Bigg) = \pm \beta_{\ell-m} \otimes \gamma_{m}[1].
  \]
  Again using \cite[Theorem 18]{Williams2012}, the element $(f_r^n * \id)^*(\pm \beta_{\ell-m} \otimes \gamma_{m}[1])$ is a generator of $\tilde{\h}{}^{2\ell-1,\ell}(\Sigma^{2m,m}\Sigma^{1,1} \tilde{\P}^{n-1}_{n-r}, \Z)$. A straightforward diagram chase, noting that both triangles of \eqref{eq:Splitting} are homotopy commutative, then yields the result.
\end{proof}

The following Lemma will aid in our proof of Proposition \ref{prop:r=2Split} below in the characteristic 0 case.
\begin{lemma}\label{lem:RealUnits}
  Let $p,q$ be nonnegative integers satisfying $p - q \ge 2$, and suppose $\alpha: S^{p,q} \to S^{p,q}$ is a pointed map over $\Q$. Then $\alpha$ is an $\A^1$-equivalence if and only if the real and complex realizations of $\alpha$ are equivalences. 
\end{lemma}
\begin{proof}
  If $q=0$, there is a natural isomorphism $[S^{p,0},S^{p,0}] \iso \Z$ \cite[Corollary 6.43]{Morel2012}, and both realization maps $[S^{p,0},S^{p,0}] \to [S^p,S^p]$ are isomorphisms. If $q \ge 1$, there is a natural isomorphism $[S^{p,q},S^{p,q}] \iso \GW(\Q)$ again by \cite[Corollary 6.43]{Morel2012}. We claim that the homomorphism $i_*:\GW(\Q) \to \GW(\R)$ induced by the unique map of fields $i: \Q \to \R$ detects units; that is, $\alpha \in \GW(\Q)$ is a unit if and only if $i_*(\alpha) \in \GW(\R)$ is a unit. The Hasse-Minkowski Theorem implies that the only ring homomorphism $\W(\Q) \to \Z$ is the signature map. It follows from \cite[Prop. 2.23]{Knebusch1982} that the homomorphism $i_*: \W(\Q) \to \W(\R)$, which we may identify with the signature map, detects units. There is a presentation of $\GW(\Q)$ as a pullback
  \[
    \begin{tikzcd}
      \GW(\Q) \dar \ar[dr,phantom, very near start, "\lrcorner"] \rar["\text{rank}"] & \Z \dar \\
      \W(\Q) \rar & \Z/2
    \end{tikzcd}
  \]
  so that $\alpha$ is a unit if and only if the images of $\alpha$ under the two projections $\GW(\Q) \to \W(\Q)$ and $\GW(\Q) \to \Z$ are both units. That the two images are units may be checked by passing to $\R$; this proves the claim. With the claim in hand, the lemma follows from the fact that $i_*(\alpha) \in \GW(\R)$ is a unit if and only if its real and complex realizations have degree $\pm1$ (see, for example, \cite[pg. 18]{Wickelgren2020}).
\end{proof}

We now come to the main result of this section.

\begin{proposition}\label{prop:r=2Split}
  Let $k$ be a field. The map $f_2^n: \Sigma^{1,1} \tilde{\P}^{n-1}_{n-2} \to V_2(\A^n)$ has a retract in $\cat{SH}(k)$ which induces a splitting
  \[
    \Sigma_{\P^1}^\infty V_2(\A^n) \homeq \Sigma_{\P^1}^\infty \Sigma^{1,1}\tilde{\P}^{n-1}_{n-2} \vee S^{4n-4,2n-1}.
  \]
\end{proposition}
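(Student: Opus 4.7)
The plan is to reduce \Cref{prop:r=2Split} directly to \Cref{prop:StableSplit} and \Cref{rem:Sphere}, since most of the technical work has already been done.

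First, observe that the hypothesis needed to invoke \Cref{prop:StableSplit} at $r=2$ is precisely that a section of $p: V_2(\A^m) \to V_1(\A^m)$ exists for infinitely many even integers $m$. As recalled in the introduction, such a section exists whenever $m$ is even, so the hypothesis holds trivially; indeed, one can take sections coming from the standard symplectic form on $\A^m$ when $m$ is even. Applying \Cref{prop:StableSplit} then yields a stable retract $\Sigma^\infty_{\P^1} V_2(\A^n) \to \Sigma^\infty_{\P^1} \Sigma^{1,1}\tilde{\P}^{n-1}_{n-2}$ of $f_2^n$ in $\cat{SH}(k)$.

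Next, I would identify the cofibre. By \Cref{rem:Sphere}, the $\A^1$-cofibre of $f_2^n$ is equivalent to the motivic sphere $S^{4n-4,2n-1}$, so there is an $\A^1$-cofibre sequence
\[
  \Sigma^{1,1}\tilde{\P}^{n-1}_{n-2} \xrar{f_2^n} V_2(\A^n) \to S^{4n-4,2n-1}.
\]
Suspending with $\Sigma^\infty_{\P^1}$ produces a cofibre sequence in $\cat{SH}(k)$, which is also a fibre sequence in the stable category. Since $f_2^n$ admits a stable retract $\Sigma^\infty_{\P^1} V_2(\A^n) \to \Sigma^\infty_{\P^1}\Sigma^{1,1}\tilde{\P}^{n-1}_{n-2}$, the stable cofibre sequence splits, yielding the desired wedge decomposition
\[
  \Sigma^\infty_{\P^1} V_2(\A^n) \homeq \Sigma^\infty_{\P^1} \Sigma^{1,1}\tilde{\P}^{n-1}_{n-2} \vee S^{4n-4,2n-1}.
\]

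Since the substantive input (the connectivity estimate, the K\"unneth computation of $h^*$, the lifting lemma, the identification of the cofibre of $f_2^n$) has already been established, the proof of \Cref{prop:r=2Split} is essentially a matter of assembling these facts. There is no real obstacle to overcome here; the only subtlety is verifying that the cohomological-dimension hypothesis of \Cref{prop:StableSplit} is needed (it entered through the conservativity argument in that proposition) and that our choice of even $m$ can be taken arbitrarily large so the inequality $m \ge 2n + 2(2-n) = 4$ in the proof of \Cref{prop:StableSplit} is satisfied, which it trivially is for $n \ge 2$.
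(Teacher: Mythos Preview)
Your proposal is correct and follows exactly the same approach as the paper: invoke \Cref{prop:StableSplit} at $r=2$ using the well-known section of $p:V_2(\A^m)\to V_1(\A^m)$ for even $m$, and identify the remaining summand via \Cref{rem:Sphere}. The paper's proof is two sentences long and contains no additional ideas beyond what you have written.
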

\begin{proof}
  It suffices to prove the proposition when $k=\Q$ and $k=\F_\ell$, where $\ell$ is an arbitrary prime. We wish to show that $f_2^n: \Sigma^{1,1} \tilde{\P}^{n-1}_{n-2} \to V_2(\A^n)$ has a stable retract; the remaining summand $S^{4n-4,2n-1}$ is identified in Remark \ref{rem:Sphere}. When $m$ is even, the map $p: V_2(\A^m) \to V_1(\A^m)$ has a section over $\Z$ and therefore over any field $k$. When $k=\F_\ell$, Proposition \ref{prop:StableSplit} applies and we are done. When $k=\Q$, we may choose $m$ even and sufficiently large to produce a map 
  \[
    \tilde{\phi'}: \Sigma^{1,1}\tilde{\P}^{n-1}_{n-2} * V_1(\A^m) \to \Sigma^{1,1} \tilde{\P}^{n+m-1}_{n+m-2}
  \]
  as in the proof of Proposition \ref{prop:StableSplit}. Recall that $\Sigma^{1,1}\tilde{\P}^{n-1}_{n-2} * V_1(\A^m) \homeq \Sigma^{2m,m} \Sigma^{1,1} \tilde{\P}^{n-1}_{n-2}$. For any integer $N \ge 2$, the motivic space $\tilde{\P}^{N-1}_{N-2}$ fits in an $\A^1$-cofibre sequence
  \begin{equation}\label{eq:eta0cofib}
    S^{2N-3,N-1} \to S^{2N-4,N-2} \to \tilde{\P}^{N-1}_{N-2}.
  \end{equation} 
  Indeed, there are $k$-rational points of $\P^{N-3}$ and $\A^{N-1} \sm \bar{0}$ fitting in a map of $\A^1$-cofibre sequences
  \[
    \begin{tikzcd}
      \Spec k \dar \rar & \P^{N-3} \dar["\imath"] \rar[equals] & \P^{N-3} \dar["\imath"] \\
      \A^{N-1} \sm \bar{0} \rar & \P^{N-2} \rar & \P^{N-1}
    \end{tikzcd}
  \]
  for which taking $\A^1$-cofibres of the vertical maps yields \eqref{eq:eta0cofib}. We remark that the first map of \eqref{eq:eta0cofib} is either $\eta$ or $0$, depending on the parity of $N$, though this point is not relevant for our arguments.
  
  Consider next the solid diagram
  \[
    \begin{tikzcd}
      S^{2n+2m-3,n+m-1} \ar[drr, dashed] \rar & \Sigma^{1,1} \tilde{\P}^{n-1}_{n-2} * V_1(\A^m) \rar \dar["\tilde{\phi}", crossing over, near start] & S^{2n+2m-1,n+m} \dar[dashed,"\alpha"] \rar & S^{2n+2m-2,n+m-1} \dar[dashed,"\beta"] \\
      S^{2n+2m-3,n+m-1} \rar & \Sigma^{1,1} \tilde{\P}^{n+m-1}_{n+m-2} \rar & S^{2n+2m-1,n+m} \rar & S^{2n+2m-2,n+m-1} 
    \end{tikzcd}
  \]
  where the rows are obtained by suspending and rotating the $\A^1$-cofibre sequence \eqref{eq:eta0cofib}. The diagonal dashed map, obtained by composing the solid maps in the diagram, is null by the connectivity calculation of \cite[Corollary 6.43]{Morel2012}. We obtain an induced map $\alpha$ as in the diagram, which further induces the map $\beta$ in the diagram. To show $\tilde{\phi}$ is a stable $\A^1$-equivalence, it suffices to show $\alpha$ and $\beta$ are $\A^1$-equivalences. Using Lemma \ref{lem:RealUnits}, we reduce to showing that the real and complex realizations of $\alpha$ and $\beta$ have degree $\pm 1$, which may be checked in singular cohomology. Indeed, it suffices to show that the real and complex realizations of $\tilde{\phi}$ induce isomorphisms in singular cohomology. This holds by a calculation in singular cohomology completely analogous to the motivic cohomology calculation of $\tilde{\phi}^*$ in the proof of Proposition \ref{prop:StableSplit}.
\end{proof}

\section{Algebraic K-theory and Adams operations}\label{sec:KT}

Let $\Gr_n(\A^m)$ denote the Grassmannian $k$-variety of $n$-planes in $m$-space. We denote by $\Gr$ the colimit of the system of inclusions
\[
  \cdots \to \Gr_n(\A^{2n}) \to \Gr_{n+1}(\A^{2(n+1)}) \to \cdots
\]
considered as a diagram in $\Spc(k)_*$. Algebraic K-theory is represented in $\cat{H}(k)_*$ by the motivic space $\Z \times \Gr$:
\[
  K_n(\sh{X}) = [S^n \Smash \sh{X}_+,\Z \times \Gr]
\]
(see \cite[Propositions 3.9 and 3.10]{Morel1999} and also \cite[Remark 2]{Schlichting2015}). If $\sh{X}$ is a pointed motivic space, we define the \emph{$n$-th reduced algebraic $K$-group} by
\[
  \tilde{K}_n(\sh{X}) := [S^n \Smash \sh{X}, \Z \times \Gr].
\]

\subsection{Adams operations}\label{subsec:AdamsOps}

Using \cite[Theorem 0.2]{Riou2010}, the Adams operations $\psi^k$ on $K_0(-)$ define endomorphisms of $\Z \times \Gr$ in $\cat{H}(k)_*$, which we also denote by $\psi^k$. These endomorphisms induce Adams operations on $K_n(-)$ and $\tilde{K}_n(-)$ via the above identifications. It is not clear that the Adams operations $\psi^k$ on $K_n(-)$ for $n \geq 1$ constructed this way agree with the Adams operations on higher $K$-groups constructed in \cite{Grayson1992}, but we do not pursue this point since it is not relevant to the arguments of this paper.  

Let $\xi_\infty \in K_0(\Z \times \Gr)$ denote the unique element corresponding to the identity map $\Z \times \Gr \to \Z \times \Gr$. We let $\eta$ denote the class $[\sh{O}(-1)]-1 \in K_0(\P^1)$. Voevodsky's $\P^1$-spectrum representing algebraic $K$-theory \cite[Section 6.2]{Voevodsky1998} has bonding maps
\[
  \sigma: \P^1 \Smash (\Z \times \Gr) \to \Z \times \Gr
\]
with the following properties (see \cite[pg. 2]{Panin2009}):
\begin{enumerate}
    \item the morphism
    \[
      \P^1 \times \Z \times \Gr \to \P^1 \Smash (\Z \times \Gr) \xrar{\sigma} \Z \times \Gr,
    \]
    where the first map is the canonical one, represents the element $\eta \otimes \xi_\infty \in K_0(\P^1 \times \Z \times \Gr)$;
    \item the adjoint map $\bar{\sigma}: \Z \times \Gr \to \Omega_{\P^1} (\Z \times \Gr)$ is an $\A^1$-equivalence. 
\end{enumerate}

\begin{lemma}\label{lem:AdamsOps}
  Let $\sh{X}$ be a pointed motivic space. There is an isomorphism $\phi_\sh{X}: \tilde{K}_1(\Sigma^{1,1} \sh{X}) \to \tilde{K}_0(\sh{X})$, natural in $\sh{X}$, such that 
  \[
    \begin{tikzcd}
      \tilde{K}_1(\Sigma^{1,1} \sh{X}) \rar["\phi_\sh{X}","\iso"'] \dar["\psi^k"] & \tilde{K}_0(\sh{X}) \dar["k\psi^k"] \\
      \tilde{K}_1(\Sigma^{1,1} \sh{X}) \rar["\phi_\sh{X}","\iso"'] & \tilde{K}_0(\sh{X})
    \end{tikzcd}
  \]
  commutes.
\end{lemma}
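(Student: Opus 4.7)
The plan is to construct $\phi_\sh{X}$ using the bonding map $\sigma$, and then deduce compatibility with Adams operations from their multiplicativity together with the elementary computation $\psi^k(\eta) = k\eta \in K_0(\P^1)$.

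First, identifying $\P^1 \homeq S^{2,1}$ in $\cat{H}(k)_*$, I would define $\phi_\sh{X}$ as the composite of isomorphisms
\[
\tilde{K}_1(\Sigma^{1,1}\sh{X}) = [\P^1 \Smash \sh{X}, \Z \times \Gr] \xrar{\iso} [\sh{X}, \Omega_{\P^1}(\Z \times \Gr)] \xleftarrow[\iso]{\bar{\sigma}_*} [\sh{X}, \Z \times \Gr] = \tilde{K}_0(\sh{X}),
\]
where the first map is the $\P^1$-loops adjunction and the second is induced by the $\A^1$-equivalence $\bar{\sigma}$. Naturality in $\sh{X}$ is immediate from this construction. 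Unwinding, the inverse $\phi_\sh{X}^{-1}$ sends a class represented by $x: \sh{X} \to \Z \times \Gr$ to the class represented by $\sigma \circ (\id_{\P^1} \Smash x)$, which by property (1) of $\sigma$ is precisely the external K-theory product $\eta \times x \in \tilde{K}_0(\P^1 \Smash \sh{X})$.

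Next I would compute $\psi^k(\eta) \in K_0(\P^1)$. Since $K_0(\P^1) = \Z[\eta]/(\eta^2)$ and Adams operations act on line bundles by the $k$-fold tensor power,
\[
\psi^k([\sh{O}(-1)]) = [\sh{O}(-k)] = (1+\eta)^k = 1 + k\eta,
\]
so $\psi^k(\eta) = k\eta$.

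Finally, invoking multiplicativity of the Adams operations (i.e., that Riou's $\psi^k$ is a ring endomorphism of $(\Z \times \Gr, \mu)$ in $\cat{H}(k)_*$, so that external products are preserved), I compute
\[
\psi^k(\phi_\sh{X}^{-1}(x)) = \psi^k(\eta \times x) = \psi^k(\eta) \times \psi^k(x) = (k\eta) \times \psi^k(x) = k \cdot (\eta \times \psi^k(x)) = \phi_\sh{X}^{-1}(k\psi^k(x)),
\]
and applying $\phi_\sh{X}$ to both sides yields the commuting square. The main subtlety is verifying this multiplicativity at the level of morphisms in $\cat{H}(k)_*$ rather than only on $K_0$-rings of schemes; this comes down to the compatibility of Riou's $\psi^k$ with the multiplication $\mu$ on $\Z \times \Gr$ (representing tensor product of virtual bundles) combined with the identification $\sigma = \eta \times \xi_\infty$ from property (1).
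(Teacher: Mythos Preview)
Your proposal is correct and follows essentially the same approach as the paper: both define $\phi_\sh{X}$ via the adjunction and the equivalence $\bar{\sigma}$, and both deduce the compatibility from $\psi^k(\eta)=k\eta$ together with multiplicativity of $\psi^k$ on external products. The paper packages the latter step as the commutativity of a square of morphisms in $\cat{H}(k)_*$ (checked via the universal element $\xi_\infty$ and Riou's representability), which is exactly the ``subtlety'' you flag at the end; your element-level computation $\psi^k(\eta\times x)=k\eta\times\psi^k(x)$ is the same argument specialized along $x$.
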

\begin{proof}
  The argument is adapted from \cite[pg. 99]{Adams1974}. First, we claim that the right square in
  \begin{equation}\label{eq:Adj}
    \begin{tikzcd}
      \P^1 \times \Z \times \Gr \rar \dar["\id \times k\psi^k"] & \P^1 \Smash (\Z \times \Gr) \rar["\sigma"] \dar["\id \Smash k\psi^k"] & \Z \times \Gr \dar["\psi^k"] \\
      \P^1 \times \Z \times \Gr \rar & \P^1 \Smash (\Z \times \Gr) \rar["\sigma"] & \Z \times \Gr
    \end{tikzcd}
  \end{equation}
  is commutative, where the leftmost horizontal maps are the canonical ones. It suffices to show that the outer square commutes, considered as a diagram in $\cat{H}(k)$. The clockwise composition corresponds to the element 
  \[
    \psi^k(\eta \otimes \xi_\infty) = \psi^k(\eta) \otimes \psi^k(\xi_\infty) = k\eta \otimes \psi^k(\xi_\infty) = \eta \otimes k\psi^k(\xi_\infty)
  \]
  of $K_0(\P^1 \times \Z \times \Gr)$, which proves the claim. The adjoint of the right square in \eqref{eq:Adj} is the commuting square
  \begin{equation}\label{eq:Adj2}
    \begin{tikzcd}
      \Z \times \Gr \rar["\weq"',"\bar{\sigma}"] \dar["k\psi^k"] & \Omega_{\P^1} (\Z \times \Gr) \dar["\Omega_{\P^1}\psi^k"] \\
      \Z \times \Gr \rar["\weq"',"\bar{\sigma}"] & \Omega_{\P^1} (\Z \times \Gr).
    \end{tikzcd}
  \end{equation}
  Consider the chain of isomorphisms
  \[
    \tilde{K}_1(\Sigma^{1,1} \sh{X}) = [S^1 \Smash \Sigma^{1,1} \sh{X}, \Z \times \Gr] \iso [\sh{X}, \Omega_{\P^1} (\Z \times \Gr)] \iso [\sh{X}, \Z \times \Gr] = \tilde{K}_0(\sh{X})
  \]
  of abelian groups. The first isomorphism $[S^1 \Smash \Sigma^{1,1} \sh{X}, \Z \times \Gr] \iso [\sh{X}, \Omega_{\P^1} (\Z \times \Gr)]$ respects the endomorphisms
  \[
    \psi^k : \Z \times \Gr \to \Z \times \Gr, \qquad \Omega_{\P^1}\psi^k: \Omega_{\P^1} (\Z \times \Gr) \to \Omega_{\P^1} (\Z \times \Gr)
  \]
  by adjunction. The second isomorphism $[\sh{X}, \Omega_{\P^1} (\Z \times \Gr)] \iso [\sh{X}, \Z \times \Gr]$ respects the endomorphisms
  \[
    \Omega_{\P^1}\psi^k: \Omega_{\P^1} (\Z \times \Gr) \to \Omega_{\P^1} (\Z \times \Gr), \qquad k\psi^k: \Z \times \Gr \to \Z \times \Gr 
  \]
  by the commutativity of \eqref{eq:Adj2}.
\end{proof}

\subsection{\texorpdfstring{$K$-theoretic obstructions}{K-Theoretic obstructions}}

The $K$-theory of projective space is given by
\[
  K_0(\P^{n-1}) = \frac{\Z[\mu]}{(\mu^n)}
\]
where $\mu = [\mathcal{O}(1)]-1$ (see, for instance, \cite[Theorem 4.5]{Manin1969}). We identify the reduced $K$-group $\tilde{K}_0(\P^{n-1})$ with the free abelian subgroup of $K_0(\P^{n-1})$ generated by the elements $\mu, \mu^2, \dots, \mu^{n-1}$. Moreover, from the long exact sequence in $K$-theory associated with the $\A^1$-cofibre sequence
\[
  \P^{m-1} \xrar{\imath} \P^{n-1} \xrar{\rho} \P^{n-1}_m,
\]
and the fact that 
\[
  \imath^*(\mu^i) = 
  \begin{cases} 
    \mu^i & \text{if } 1 \le i \le m-1 \\
    0 & \text{if } m \le i \le n-1
  \end{cases},
\]
the group $\tilde{K}_0(\P^{n-1}_{m})$ can be identified with the free abelian subgroup of $\tilde{K}_0(\P^{n-1})$ generated by $\mu^m, \dots, \mu^{n-1}$. We will need to describe the action of $\psi^2$ on $\tilde{K}_0(\P^{n-1}_m)$, which can be deduced from the action of $\psi^2$ on $\tilde{K}_0(\P^{n-1})$. From $\psi^2(\mu) = \mu^2+2\mu$, and since $\psi^2$ is a ring homomorphism, in $\tilde{K}_0(\P^{n-1})$ we have
\begin{equation}\label{eq:psi2}
  \psi^2(\mu^i) = (\mu^2+2\mu)^i = \sum_{j=0}^i \binom{i}{j} 2^{i-j} \mu^{i+j} \mod{(\mu^n)}
\end{equation}
for each $i=1, \dots, n-1$. Since $\rho^*(\mu^i) = \mu^i$, the equality \eqref{eq:psi2} holds in $\tilde{K}_0(\P^{n-1}_{m})$ when $i = m , \dots, n-1$.

\begin{lemma}\label{lem:NoRetract}
  Let $n \geq 5$ be an integer. There does not exist a retract of 
  \[
    \Sigma^{1,1}\rho^*: \tilde{K}_1(\Sigma^{1,1} \P^{n-1}_{n-2}) \to \tilde{K}_1(\Sigma^{1,1} \P^{n-1}_{n-4})
  \]
  that commutes with Adams operations (as defined in \Cref{subsec:AdamsOps}).
\end{lemma}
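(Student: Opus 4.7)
The plan is to translate the statement via \Cref{lem:AdamsOps} into one about a retract of $\rho^*$ at the $\tilde{K}_0$-level commuting with $\psi^2$, and then derive two incompatible congruences modulo $8$ from $\psi^2$-equivariance. Explicitly, the natural isomorphism $\phi_{\sh{X}}: \tilde{K}_1(\Sigma^{1,1}\sh{X}) \to \tilde{K}_0(\sh{X})$ of \Cref{lem:AdamsOps} sends $\psi^k$ on the source to $k\psi^k$ on the target; a $\psi^k$-commuting retract of $\Sigma^{1,1}\rho^*$ therefore transports to a retract $r$ of $\rho^*: \tilde{K}_0(\P^{n-1}_{n-2}) \to \tilde{K}_0(\P^{n-1}_{n-4})$ commuting with $k\psi^k$ for every $k$, and hence (since both $\tilde{K}_0$-groups are torsion-free) with $\psi^k$ itself. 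So it suffices to rule out a $\psi^2$-equivariant retract of this $\rho^*$.

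Such a retract $r$ is determined by the images $r(\mu^{n-3}) = a\mu^{n-2} + b\mu^{n-1}$ and $r(\mu^{n-4}) = c\mu^{n-2} + d\mu^{n-1}$ with $a, b, c, d \in \Z$, since $r$ must fix $\mu^{n-2}$ and $\mu^{n-1}$. I would expand both sides of $r\psi^2(\mu^{n-3}) = \psi^2 r(\mu^{n-3})$ and $r\psi^2(\mu^{n-4}) = \psi^2 r(\mu^{n-4})$ using formula \eqref{eq:psi2} and compare coefficients in the basis $\{\mu^{n-2}, \mu^{n-1}\}$. The $\mu^{n-2}$-coefficient of the first equation pins down $a = (n-3)/2$, which immediately forces $n$ to be odd. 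Substituting this value, the $\mu^{n-1}$-coefficient of the first equation becomes $24b = -(n-3)(3n-4)$, and the $\mu^{n-2}$-coefficient of the second equation becomes $24c = (n-4)(5n-17)$.

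For odd $n \geq 5$, integrality of $b$ forces $8 \mid (n-3)(3n-4)$; since $3n-4$ is odd, this requires $n \equiv 3 \pmod{8}$. Integrality of $c$ forces $8 \mid (n-4)(5n-17)$; since $n-4$ is odd and $5$ is a unit modulo $8$, this requires $n \equiv 5 \pmod{8}$. These mod-$8$ conditions are mutually exclusive, contradicting the existence of $r$.

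The argument is conceptually straightforward; the main obstacle is the careful, elementary computation of the $\psi^2$-commutation equations modulo $\mu^n$ via \eqref{eq:psi2}. Note that the contradiction is purely $2$-primary, so the factor of $3$ in $24$ plays no role, and only two of the four integer unknowns $a,b,c,d$ are actually needed to obtain the contradiction.
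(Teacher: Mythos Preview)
Your approach is essentially the paper's: reduce via \Cref{lem:AdamsOps} to a $\psi^2$-equivariant retract of $\rho^*$ on $\tilde{K}_0$, write the retract in the basis $\mu^{n-2},\mu^{n-1}$, and derive a contradiction from the resulting integer constraints. The paper parametrizes the other way around ($\phi(\mu^{n-4})=a\mu^{n-2}+b\mu^{n-1}$, $\phi(\mu^{n-3})=c\mu^{n-2}+d\mu^{n-1}$) and finishes by observing $d=-a+\tfrac{-5n+16}{12}\notin\Z$ for $n$ odd; your mod $8$ incompatibility is a perfectly good alternative endgame using the same two quantities.

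There is, however, an arithmetic slip in your $\mu^{n-2}$-coefficient for the $\mu^{n-4}$ equation. Carrying out \eqref{eq:psi2} one gets
\[
3\cdot 2^{n-4}c \;=\; (n-4)2^{n-5}a+\binom{n-4}{2}2^{n-6},
\]
and substituting $a=(n-3)/2$ yields $24c=(n-4)\bigl(2(n-3)+(n-5)\bigr)=(n-4)(3n-11)$, not $(n-4)(5n-17)$. (This matches the paper's value $a=(3n^{2}-23n+44)/24$ under the swapped labelling.) Fortunately your conclusion survives the correction: for $n$ odd, $n-4$ is odd, so $8\mid 3n-11$ forces $n\equiv 1\pmod 8$, which is still incompatible with the $n\equiv 3\pmod 8$ coming from $24b=-(n-3)(3n-4)$. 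So the argument is sound once the coefficient is fixed; only the stated congruence $n\equiv 5\pmod 8$ needs to be replaced by $n\equiv 1\pmod 8$.
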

\begin{proof}
  Suppose such a retract $\phi': \tilde{K}_1(\Sigma^{1,1} \P^{n-1}_{n-4}) \to \tilde{K}_1(\Sigma^{1,1} \P^{n-1}_{n-2})$ exists for the sake of contradiction. Using Lemma \ref{lem:AdamsOps}, there exists a retract of 
  \[
    \rho^*: \tilde{K}_0(\P^{n-1}_{n-2}) \to \tilde{K}_0(\Sigma^{1,1} \P^{n-1}_{n-4})
  \]
  which commutes with the operations $k\psi^k$. Call this retract $\phi$. Since $\tilde{K}_0(\P^{n-1}_{n-2})$ is a free abelian group, the retract $\phi$ also commutes with the Adams operations $\psi^k$. 
  
  The remainder of the proof proceeds as in \cite[Satz 1]{Suter1970}; we include the argument for completeness. Since $\phi$ is a retract of $\rho^*$ and $\rho^*(\mu^i) = \mu^i$ for $i=n-2,n-1$, we have $\phi(\mu^i) = \mu^i$ for $i=n-2,n-1$. Write
  \[
    \phi(\mu^{n-4}) = a\mu^{n-2} + b\mu^{n-1}, \qquad \phi(\mu^{n-3}) = c\mu^{n-2}+d\mu^{n-1}
  \]
  for some integers $a,b,c,d$. Using \eqref{eq:psi2}, we have
  \begin{align*}
    \phi \circ \psi^2(\mu^{n-4}) &= (2^{n-4}a+(n-4)2^{n-5}c+(n-4)(n-5)2^{n-7})\mu^{n-2}\\ 
    &\quad+ \left( 2^{n-4}b+(n-4)2^{n-5}d+\frac{(n-4)(n-5)(n-6)}{3}2^{n-8} \right) \mu^{n-1}, \\
    \psi^2 \circ \phi(\mu^{n-4}) &= (2^{n-2}a)\mu^{n-2}+((n-2)2^{n-3}a+2^{n-1}b)\mu^{n-1},\\
    \phi \circ \psi^2(\mu^{n-3}) &= (2^{n-3}c+(n-3)2^{n-4})\mu^{n-2}+(2^{n-3}d+(n-3)(n-4)2^{n-6})\mu^{n-1},\\
    \psi^2 \circ \phi(\mu^{n-3}) &= (2^{n-2}c)\mu^{n-2}+((n-2)2^{n-3}c+2^{n-1}d)\mu^{n-1}.
  \end{align*}
  Comparing coefficients, we find
  \begin{gather*}
    2^{n-2}c = 2^{n-3}c+(n-3)2^{n-4}, \qquad 2^{n-2}a = 2^{n-4}a+(n-4)2^{n-5}c+(n-4)(n-5)2^{n-7}, \\
    (n-2)2^{n-3}c+2^{n-1}d = 2^{n-3}d+(n-3)(n-4)2^{n-6}.
  \end{gather*}
  From the first equation, we get $c=(n-3)/2$ so that $n$ is odd. From the second and third equations, we obtain
  \[
    a = \frac{3n^2-23n+44}{24}, \qquad d = \frac{-3n^2+13n-12}{24}.
  \]
  Then
  \[
    d = \frac{-3n^2+23n-44}{24}+ \frac{-10n+32}{24} = -a+ \frac{-5n+16}{12}.
  \]
  Since $n$ is odd, we observe $d \notin \Z$, a contradiction. 
\end{proof}

\begin{lemma}\label{lem:NoRetract2}
  Let $n \ge 4$ be an integer. If there exists a retract of 
  \[
    \Sigma^{1,1}\rho^*: \tilde{K}_1(\Sigma^{1,1} \P^{n-1}_{n-2}) \to \tilde{K}_1(\Sigma^{1,1}\P^{n-1}_{n-3})
  \]
  that commutes with Adams operations (as defined in \Cref{subsec:AdamsOps}), then $n \equiv 3 \pmod{24}$
\end{lemma}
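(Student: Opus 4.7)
The approach is to mirror the proof of \Cref{lem:NoRetract} essentially verbatim, but with the smaller source $\P^{n-1}_{n-3}$ (so only one cell in the extension, hence only one pair of integer unknowns to constrain). The first step is to apply \Cref{lem:AdamsOps} to convert the hypothesis into a retract
\[
  \phi\co \tilde{K}_0(\P^{n-1}_{n-3}) \to \tilde{K}_0(\P^{n-1}_{n-2})
\]
of $\rho^*$ that intertwines the operators $k\psi^k$; since the target is free abelian of rank $2$, dividing by $k$ shows $\phi$ commutes with $\psi^k$ itself. Under the identifications preceding \Cref{lem:NoRetract}, the retract condition forces $\phi(\mu^{n-2})=\mu^{n-2}$ and $\phi(\mu^{n-1})=\mu^{n-1}$, so the only freedom is
\[
  \phi(\mu^{n-3}) = c\mu^{n-2} + d\mu^{n-1}, \qquad c,d \in \Z.
\]

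Next, I would impose $\phi\circ\psi^2 = \psi^2\circ\phi$ on the class $\mu^{n-3}$. Using \eqref{eq:psi2}, in $\tilde{K}_0(\P^{n-1}_{n-3})$ we have
\[
  \psi^2(\mu^{n-3}) = 2^{n-3}\mu^{n-3} + (n-3)2^{n-4}\mu^{n-2} + \tbinom{n-3}{2}2^{n-5}\mu^{n-1},
\]
while $\psi^2(\mu^{n-2}) = 2^{n-2}\mu^{n-2} + (n-2)2^{n-3}\mu^{n-1}$ and $\psi^2(\mu^{n-1}) = 2^{n-1}\mu^{n-1}$. Equating coefficients of $\mu^{n-2}$ in the two expressions for $\psi^2\phi(\mu^{n-3})$ yields $c = (n-3)/2$, so $n$ must be odd; equating coefficients of $\mu^{n-1}$ and substituting the value of $c$ gives, after simplification,
\[
  d = -\frac{(n-3)(3n-4)}{24}.
\]

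The final step is a divisibility analysis: $d\in\Z$ forces $24 \mid (n-3)(3n-4)$. Modulo $3$ the product is $\equiv 2n$, so $3\mid n$; modulo $8$, the factor $3n-4$ is odd (since $n$ is odd), so we must have $8 \mid (n-3)$, i.e.\ $n\equiv 3\pmod 8$. Combining these with the Chinese Remainder Theorem yields $n\equiv 3\pmod{24}$, as required. The whole argument is a straightforward but slightly tedious binomial bookkeeping; the only subtle point is justifying the passage from the induced $k\psi^k$-equivariance on $\tilde K_0$ back to honest $\psi^k$-equivariance, which works exactly as in the proof of \Cref{lem:NoRetract} because the codomain is torsion-free. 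As a consistency check, one could run the same computation with $\psi^3$ in place of $\psi^2$ and verify that it yields the identical expression for $d$, which it does.
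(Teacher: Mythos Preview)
Your proposal is correct and follows essentially the same approach as the paper's proof: reduce via \Cref{lem:AdamsOps} to a $\psi^k$-equivariant retract on $\tilde K_0$, write $\phi(\mu^{n-3})=c\mu^{n-2}+d\mu^{n-1}$, and compare $\phi\psi^2$ with $\psi^2\phi$ to force $c=(n-3)/2$ and $d=-(n-3)(3n-4)/24=(-3n^2+13n-12)/24$. The only cosmetic difference is that the paper leaves $d$ in expanded form and checks directly which residues of $n$ modulo $24$ make $-3n^2+13n-12$ divisible by $24$ (finding $n\equiv 3$ or $12$, then discarding $12$ since $n$ is odd), whereas you factor and argue prime by prime; both are equivalent.
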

\begin{proof}
  The proof is more or less contained in the proof of Lemma \ref{lem:NoRetract}. There is a retract $\phi$ of 
  \[
    \rho^*: \tilde{K}_0(\P^{n-1}_{n-2}) \to \tilde{K}_0(\P^{n-1}_{n-3})
  \]
  which commutes with $\psi^k$. Again write $\phi(\mu^{n-3}) = c\mu^{n-2}+d\mu^{n-1}$ for some integers $c,d$. As in the proof of Lemma \ref{lem:NoRetract}, we have
  \begin{align*}
    \phi \circ \psi^2(\mu^{n-3}) =& (2^{n-3}c+(n-3)2^{n-4})\mu^{n-2}+(2^{n-3}d+(n-3)(n-4)2^{n-6})\mu^{n-1},\\
    \psi^2 \circ \phi(\mu^{n-3}) =& (2^{n-2}c)\mu^{n-2}+((n-2)2^{n-3}c+2^{n-1}d)\mu^{n-1},
  \end{align*}
  so that
  \[
    2^{n-2}c = 2^{n-3}c+(n-3)2^{n-4}, \qquad (n-2)2^{n-3}c+2^{n-1}d = 2^{n-3}d+(n-3)(n-4)2^{n-6}.
  \]
  Then $c=(n-3)/2$, so $n$ is odd, and 
  \[
    d = \frac{-3n^2+13n-12}{24} \in \Z.
  \]
  The integer $-3n^2+13n-12$ is divisible by $24$ if and only if $n \equiv 3 \pmod{24}$ or $n \equiv 12 \pmod{24}$. Since $n$ is odd, we conclude. 
\end{proof}

\section{The nonexistence of sections}\label{sec:Sections}

We may now prove the main results of this paper concerning the nonexistence of a section of $p: V_{r+\ell}(\A^n) \to 
V_r(\A^n)$.

\subsection{The case of \texorpdfstring{$\ell \ge 2$}{l>=2}}

\begin{theorem}\label{thm:l>1}
  Let $k$ be a field and $r,\ell,n$ be integers with $r,\ell \geq 2$ and $r + \ell \leq n$. The projection $p\co V_{r+\ell}(\A^n) \to V_r(\A^n)$ does not have a section over $k$. 
\end{theorem}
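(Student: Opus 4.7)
The plan is as follows.

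\emph{Reductions.} By \Cref{lem:Sec1}, a section of $p\colon V_{r+\ell}(\A^n) \to V_r(\A^n)$ for general $\ell \ge 2$ yields a section with $\ell = 2$, so it suffices to treat $\ell = 2$. The contrapositive of \Cref{lem:Sec2} then reduces the problem further to showing that $p\colon V_4(\A^m) \to V_2(\A^m)$ admits no section over $k$ for every $m \ge 4$. Since existence of a section is preserved under base change, I may replace $k$ by its algebraic closure and assume throughout that $k$ is perfect of finite $2$-\'{e}tale cohomological dimension; in particular, \Cref{prop:r=2Split} applies, yielding a stable retract $\tau\colon V_2(\A^m) \to \Sigma^{1,1}\tilde{\P}^{m-1}_{m-2}$ of $f_2^m$ in $\cat{SH}(k)$.

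\emph{Construction of a stable section of $\tilde{\rho}$.} Suppose, for contradiction, that $\phi\colon V_2(\A^m) \to V_4(\A^m)$ is a section of $p$. I would construct a stable section $\tilde{\phi}\colon \Sigma^{1,1}\tilde{\P}^{m-1}_{m-2} \to \Sigma^{1,1}\tilde{\P}^{m-1}_{m-4}$ of $\Sigma^{1,1}\tilde{\rho}$ in $\cat{SH}(k)$ by adapting the proof of \Cref{prop:StableSplit}: for a sufficiently large even integer $m'$, I combine $\phi$, the standard section of $p\colon V_2(\A^{m'}) \to V_1(\A^{m'})$, appropriate Stiefel inclusions, and the intrinsic join of \eqref{eq:StiefelJoin} into a coherent morphism from $\Sigma^{1,1}\tilde{\P}^{m-1}_{m-2} \ast V_1(\A^{m'})$ into $V_4(\A^{m+m'})$. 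The $\A^1$-connectivity of $f_4^{m+m'}$ from \Cref{prop:frnConnectivity}, together with the dimension bound of \Cref{rem:JoinCohoDim}, makes \Cref{lem:CellApprox} applicable and supplies a lift landing in $\Sigma^{1,1}\tilde{\P}^{m+m'-1}_{m+m'-4}$. Desuspending this lift using the stable $\A^1$-equivalences $\Sigma^{1,1}\tilde{\P}^{m-1}_{m-j} \ast V_1(\A^{m'}) \homeq \Sigma^{2m',m'} \Sigma^{1,1}\tilde{\P}^{m-1}_{m-j}$ (valid by a comparison of the matching Tate cellular structures) yields the desired $\tilde{\phi}$. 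A diagram chase using $p \circ \phi = \id$, the commuting square \eqref{eq:prho}, and $\tau \circ f_2^m \simeq \id$ verifies $\tilde{\rho} \circ \tilde{\phi} \simeq \id$ in $\cat{SH}(k)$.

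\emph{$K$-theoretic contradiction.} Applying $\tilde{K}_1$ to $\tilde{\phi}$ produces a retract of $\tilde{\rho}^*\colon \tilde{K}_1(\Sigma^{1,1}\tilde{\P}^{m-1}_{m-2}) \to \tilde{K}_1(\Sigma^{1,1}\tilde{\P}^{m-1}_{m-4})$ that commutes with the Adams operations by naturality (\Cref{subsec:AdamsOps}). For $m \ge 5$, this contradicts \Cref{lem:NoRetract}. The boundary case $m = 4$ is handled by a direct variant of the proof of \Cref{lem:NoRetract}: one works with $\tilde{K}_0(\P^3_+) = K_0(\P^3)$ in place of $\tilde{K}_0(\P^{n-1}_{n-4})$, and the $\psi^2$-commutation equation applied to $\phi(\mu)$ still forces the coefficient $c = (n-3)/2 = 1/2$ at $n=4$, which is not an integer. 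The main technical obstacle is the construction of $\tilde{\phi}$: the section $\phi$ is available only at level $m$, the standard section of $V_2 \to V_1$ only for even $m'$, and the natural intrinsic join pairs Stiefel varieties of matching ranks, so arranging these constraints so that \Cref{lem:CellApprox} applies requires a careful adaptation of the proof of \Cref{prop:StableSplit}.
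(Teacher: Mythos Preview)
Your reductions to $r=\ell=2$ and to $k$ algebraically closed match the paper's. The divergence is in how you produce the lift $\tilde{\phi}$, and this is where a genuine gap appears.

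The paper does \emph{not} route through an intrinsic-join construction here. For $n\ge 8$ it lifts $\phi\circ f_2^n$ directly along $f_4^n$ using \Cref{lem:CellApprox}: the source $\Sigma^{1,1}\tilde{\P}^{n-1}_{n-2}$ has Nisnevich cohomological dimension $n$, while $f_4^n$ is $\A^1$-$(2n-9)$-connected by \Cref{prop:frnConnectivity}, so the unstable lift exists once $n\ge 8$. The stable splitting of \Cref{prop:r=2Split} is invoked only to know that $(f_2^n)^*$ is surjective on $\tilde{K}_1$, after which the same diagram chase you describe yields the retract contradicting \Cref{lem:NoRetract}. The remaining cases $n\in\{4,5,6,7\}$ are handled separately by reducing to $V_3(\A^{n-1})\to V_1(\A^{n-1})$ via \Cref{lem:Sec2} and then using Raynaud's James-number bounds together with Steenrod-square computations (including the Kumar--Nori example for $n-1=3$).

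Your proposed adaptation of \Cref{prop:StableSplit} does not go through as written. The intrinsic join $h\colon V_r(\A^a)\ast V_r(\A^b)\to V_r(\A^{a+b})$ requires \emph{equal} ranks, so to land in $V_4(\A^{m+m'})$ both join factors must lie in $V_4$. You only have the section $V_1(\A^{m'})\to V_2(\A^{m'})$; a section $V_1(\A^{m'})\to V_4(\A^{m'})$ is precisely what is not available. Patching with a Stiefel inclusion $i\colon V_2(\A^{m'})\to V_4(\A^{m'+2})$ shifts the ambient dimension, so the lifted target becomes $\Sigma^{1,1}\tilde{\P}^{m+m'+1}_{m+m'-2}$, whose cells sit two motivic weights above those of $\Sigma^{2m',m'}\Sigma^{1,1}\tilde{\P}^{m-1}_{m-4}$; the claimed desuspension to $\Sigma^{1,1}\tilde{\P}^{m-1}_{m-4}$ therefore cannot be an equivalence. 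Without a genuine lift satisfying $f_4^m\circ\tilde{\phi}\simeq \phi\circ f_2^m$, the diagram chase you sketch does not produce a section of $\tilde{\rho}$, and \Cref{lem:NoRetract} cannot be applied for $5\le m\le 7$. Those small cases need an independent argument, as in the paper.
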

\begin{proof}
  Using Lemma \ref{lem:Sec1}, we may reduce to the case $\ell=2$, and using Lemma \ref{lem:Sec2}, we may reduce to the case $r=2$. That is, we wish to show that there does not exist a section of $p: V_4(\A^n) \to V_2(\A^n)$. For the sake of contradiction, suppose a section $\phi: V_2(\A^n) \to V_4(\A^n)$ of $p$ exists.
  
  \textbf{Case 1: $n \ge 8$}. We consider the solid diagram
  \begin{equation}\label{eq:lift1}
    \begin{tikzcd}
      \Sigma^{1,1} \tilde{\P}^{n-1}_{n-2} \dar["f_2^n"] \rar[dashed, "\tilde{\phi}"] & \Sigma^{1,1} \tilde{\P}^{n-1}_{n-4} \dar["f_4^n"] \\
      V_2(\A^n) \rar["\phi"] & V_4(\A^n)
    \end{tikzcd}.
  \end{equation}
  The Nisnevich cohomological dimension of $\Sigma^{1,1} \tilde{\P}^{n-1}_{n-2}$ is $n$, and the map $f_4^n$ is $\A^1$-$(2n-9)$-connected by Proposition \ref{prop:frnConnectivity}. As $n \ge 8$, Lemma \ref{lem:CellApprox} guarantees that the dashed lift $\tilde{\phi}: \Sigma^{1,1}\tilde{\P}^{n-1}_{n-2} \to \Sigma^{1,1}\tilde{\P}^{n-1}_{n-4}$ of $\phi \circ f_2^n$ exists making \eqref{eq:lift1} commute (after $\A^1$-localization). The space $\Sigma^{1,1}\tilde{\P}^{n-1}_{n-4}$ is $\A^1$-simply connected in this case (Lemmas \ref{lem:iConn2} and \ref{lem:SmashConn}), so, using \cite[Proposition 2.1]{Gant2025}, we may assume $\tilde{\phi}$ is a pointed map. In particular, the map on $\tilde{K}_1(-)$ induced by $\tilde{\phi}$ commutes with the Adams operations. 
  
  The stable splitting of Proposition \ref{prop:r=2Split} implies that the induced map 
  \[
    {f_2^n}^*: \tilde{K}_1(V_2(\A^n)) \to \tilde{K}_1(\Sigma^{1,1}\tilde{\P}^{n-1}_{n-2})
  \]
  is an epimorphism. Using the commutativity of \eqref{eq:prho}, we also have
  \[
    \tilde{\phi}^* \circ \Sigma^{1,1}\tilde{\rho}^* \circ {f_2^n}^* = \tilde{\phi}^* \circ {f_4^n}^* \circ p^* = {f_2^n}^* \circ \phi^* \circ p^* = {f_2^n}^*
  \]
  on $\tilde{K}_1(-)$, so that $\tilde{\phi}^* \circ \Sigma^{1,1}\tilde{\rho}^* = \id$. This contradicts Lemma \ref{lem:NoRetract}.

  \textbf{Case 2: $n < 8$}. If a section of $p: V_4(\A^n) \to V_2(\A^n)$ exists, then there exists a section of $p\co V_3(\A^{n-1}) \to V_1(\A^{n-1})$ as well by Lemma \ref{lem:Sec2}. 
  
  If the characteristic of $k$ is $0$, then the existence of a section of $p: V_3(\A^{n-1}) \to V_1(\A^{n-1})$ implies that $n-1$ is divisible by the third James number $b_3=24$ by \cite[Th\'eor\`eme 6.5]{Raynaud1968}. We conclude that no such section exists when $n < 8$ in the characteristic-$0$ case.
  
  If the characteristic of $k$ is $p>0$, then \cite[Th\'eor\`eme 6.6]{Raynaud1968} implies that $n-1$ is divisible by a certain integer $N_3(p)$ (see \cite[pg. 21]{Raynaud1968} for a definition of this integer). 
  
  When $p = 2$, the integer $N_3(2)$ is $3$, so a section of $p: V_3(\A^{n-1}) \to V_1(\A^{n-1})$ exists possibly in the cases $n-1=3,6$. When $n-1=3$, the example of M. Kumar and M.V. Nori in \cite[pg. 1443]{MohanKumar1985} provides a stably free module of rank $2$ over a $k$-algebra given by a unimodular row of length $3$ that is not free, so the map $p: \GL_3 \to V_1(\A^3)$ does not have a section over $k$ (see \cite[Proposition 2.4]{Raynaud1968}). In the case $n-1=6$, we consider the action of the Steenrod squares in characteristic $2$ of \cite{Primozic2020}. For $i=4,5,6$, let $\bar{\alpha}_i$ be the mod-2 reduction of the class $\alpha_i \in \h^{2i-1,i}(V_3(\A^6),\Z)$ (see \eqref{eq:CohoPres}). If a section of $p: V_3(\A^6) \to V_1(\A^6)$ exists, then the Steenrod squares must vanish on $\bar{\alpha}_4$. The calculation $\text{Sq}^4(\bar{\alpha}_4) = \bar{\alpha}_6$ in $\h^{*,*}(V_3(\A^6),\Z/2)$ of \cite[Proposition 2.3]{Primozic2022} obstructs the existence of a section of $p: V_3(\A^6) \to V_1(\A^6)$. 

  In the case of $p=3$, the integer $N_3(3)$ is $4$, so we need to show that there does not exist a section of $p: V_3(\A^4) \to V_1(\A^4)$. The calculation $\text{Sq}^4(\bar{\alpha}_2) = \bar{\alpha}_4$ in $\h^{*,*}(V_3(\A^4),\Z/2)$ of \cite[Theorem 20]{Williams2012} obstructs the existence of such a section in this case. 

  Finally, if $p > 3$, the integer $N_3(p)$ is $12$, so there does not exist a section of $p: V_3(\A^{n-1}) \to V_1(\A^{n-1})$ when $n < 8$.
\end{proof}

\begin{remark}
  Using \cite[Proposition 4.6]{Gant2025}, Theorem \ref{thm:l>1} shows that there does not exist a section of $p$ in $\cat{H}(k)$. In particular, there does not exist a section of $p'\co V_{r+\ell}'(\A^n) \to V_r'(\A^n)$ over $k$ when $r,\ell \ge 2$.
\end{remark}

\subsection{The case of \texorpdfstring{$\ell=1$}{l=1}}

\begin{theorem}\label{thm:l=1}
  Let $k$ be a field and $r,n \ge 2$ be integers with $r \le n-2$. If the projection $p: V_{r+1}(\A^n) \to V_r(\A^n)$ has a section over $k$, then $n-r \equiv 1 \pmod{24}$.
\end{theorem}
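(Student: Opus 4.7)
The plan is to adapt the strategy used for \Cref{thm:l>1}: starting from a hypothetical section of $p$, reduce to a canonical case where a $K$-theoretic retract argument contradicts \Cref{lem:NoRetract2} unless $n-r \equiv 1 \pmod{24}$.

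First I would base-change to $\overline{k}$, which is perfect of finite $2$-étale cohomological dimension so that \Cref{prop:r=2Split} applies. Next, I would apply \Cref{lem:Sec2} with $s=r-2$ to convert the hypothetical section of $p\co V_{r+1}(\A^n)\to V_r(\A^n)$ into a section $\phi\co V_2(\A^{n_0})\to V_3(\A^{n_0})$, where $n_0:=n-r+2$. A further application of \Cref{lem:Sec2} (with $s=1$) yields a section of $p\co V_2(\A^{n_0-1})\to V_1(\A^{n_0-1})$, which by the classical case $r=\ell=1$ forces $n_0-1$ to be even, so $n-r$ is odd and $n_0 \ge 5$.

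For $n_0 \ge 7$ I would form the diagram
\[
  \begin{tikzcd}
    \Sigma^{1,1}\tilde{\P}^{n_0-1}_{n_0-2} \ar[d,"f_2^{n_0}"'] \ar[r,dashed,"\tilde{\phi}"] & \Sigma^{1,1}\tilde{\P}^{n_0-1}_{n_0-3} \ar[d,"f_3^{n_0}"] \\
    V_2(\A^{n_0}) \ar[r,"\phi"] & V_3(\A^{n_0}).
  \end{tikzcd}
\]
The source has Nisnevich cohomological dimension at most $n_0$, and by \Cref{prop:frnConnectivity} the map $f_3^{n_0}$ is $\A^1$-$(2n_0-7)$-connected. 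Since $n_0\le 2n_0-6$ for $n_0\ge 6$, \Cref{lem:CellApprox} produces the dashed lift $\tilde{\phi}$, which may be taken pointed since the target is $\A^1$-simply connected (by \Cref{lem:iConn2,lem:SmashConn}). By \Cref{prop:r=2Split} the induced map $(f_2^{n_0})^*\co \tilde{K}_1(V_2(\A^{n_0}))\to \tilde{K}_1(\Sigma^{1,1}\tilde{\P}^{n_0-1}_{n_0-2})$ is a split epimorphism, and combining \eqref{eq:prho} with $p\circ\phi=\id$ gives
\[
  \tilde{\phi}^*\circ(\Sigma^{1,1}\tilde{\rho})^*\circ(f_2^{n_0})^*=(f_2^{n_0})^*\circ\phi^*\circ p^*=(f_2^{n_0})^*.
\]
Cancelling the surjection, $\tilde{\phi}^*$ is a retract of $(\Sigma^{1,1}\tilde{\rho})^*$ on $\tilde{K}_1$ commuting with Adams operations, and \Cref{lem:NoRetract2} forces $n_0\equiv 3\pmod{24}$, equivalently $n-r\equiv 1\pmod{24}$.

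The hard part will be the residual case $n_0=5$ (equivalently $n-r=3$), where the dimension bound just fails and the $K$-theoretic lift cannot be produced directly. I would dispose of this case in the spirit of the small-$n$ analysis in the proof of \Cref{thm:l>1}: in characteristic zero, invoking \cite{Suter1970} via complex realization rules out the section; in positive characteristic, one combines further applications of \Cref{lem:Sec1,lem:Sec2} with motivic Steenrod square computations on $V_3(\A^5)$ (in the spirit of \cite{Primozic2020,Primozic2022}).
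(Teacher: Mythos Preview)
Your proposal is essentially the paper's own argument: base-change to $\overline{k}$, reduce via \Cref{lem:Sec2} to a section $\phi\co V_2(\A^{n_0}) \to V_3(\A^{n_0})$ with $n_0=n-r+2$, lift $\phi \circ f_2^{n_0}$ along $f_3^{n_0}$ using \Cref{lem:CellApprox} and \Cref{prop:frnConnectivity}, then combine \Cref{prop:r=2Split} with \eqref{eq:prho} to exhibit $\tilde\phi^*$ as a retract of $(\Sigma^{1,1}\tilde\rho)^*$ on $\tilde K_1$, whence \Cref{lem:NoRetract2} forces $n_0 \equiv 3 \pmod{24}$.

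The only real difference is the organization of boundary cases. The paper runs the lifting argument uniformly for all of $n-r \geq 3$ (its Case~1) and treats $n-r=2$ separately (Case~2) by reducing once more via \Cref{lem:Sec2} to the nonexistence of a section of $V_2(\A^3) \to V_1(\A^3)$, which is ruled out by $\text{Sq}^2(\bar\alpha_2)=\bar\alpha_3$ in $\h^{*,*}(V_2(\A^3);\Z/2)$. Your preliminary parity step is exactly this Case~2 reduction carried out upstream for general $n_0$, and the paper does \emph{not} isolate $n_0=5$: it is absorbed into Case~1 rather than handled by complex realization or further Steenrod computations on $V_3(\A^5)$. So your proposed detour for $n_0=5$ is not the route the paper takes, and as written that part of your sketch is too vague to stand on its own.
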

\begin{proof}
    Since $p: V_{r+1}(\A^n) \to V_r(\A^n)$ has a section, there exists a section $\phi$ of the map $p: V_3(\A^{n-r+2}) \to V_2(\A^{n-r+2})$ by Lemma \ref{lem:Sec2}. 
    
    \textbf{Case 1: $n-r \geq 3$}. We consider the solid diagram
    \begin{equation}\label{eq:lift2}
      \begin{tikzcd}
        \Sigma^{1,1} \tilde{\P}^{n-r+1}_{n-r} \rar[dashed, "\tilde{\phi}"] \dar{f_2^{n-r+2}} & \Sigma^{1,1}\tilde{\P}^{n-r+1}_{n-r-1} \dar{f_3^{n-r+2}} \\
        V_2(\A^{n-r+2}) \rar{\phi} & V_3(\A^{n-r+2})
      \end{tikzcd}.
    \end{equation}
    Note that $2n-2r-2 \ge n-r+2$ in this case. The map $f_3^{n-r+2}$ is $\A^1$-$2n-2r-3$-connected (Proposition \ref{prop:frnConnectivity}), while the Nisnevich cohomological dimension of $\Sigma^{1,1} \tilde{\P}^{n-r+1}_{n-r-1}$ is $n-r+2$. Lemma \ref{lem:CellApprox} implies that the dashed map $\tilde{\phi}: \Sigma^{1,1} \tilde{\P}^{n-r+1}_{n-r} \to \Sigma^{1,1}\tilde{\P}^{n-r+1}_{n-r-1}$ exists making the square of \eqref{eq:lift2} commute (after $\A^1$-localization). The space $\Sigma^{1,1}\tilde{\P}^{n-r+1}_{n-r-1}$ is $\A^1$-simply connected in this case by Lemma \ref{lem:iConn2}, so we may assume $\tilde{\phi}$ is a pointed map by \cite[Proposition 2.1]{Gant2025}. In particular, $\tilde{\phi}$ commutes with Adams operations on $\tilde{K}_1(-)$.
    
    The induced map 
    \[
      {f_2^{n-r+2}}^*: \tilde{K}_1(V_2(\A^{n-r+2})) \to \tilde{K}_1(\Sigma^{1,1}\tilde{\P}^{n-r+1}_{n-r})
    \]
    is an epimorphism by Proposition \ref{prop:r=2Split} which implies that the map $\tilde{\phi}^*$ is a retract of 
    \[
      \Sigma^{1,1} \tilde{\rho}^*: \tilde{K}_1(\Sigma^{1,1}\tilde{\P}^{n-r-1}_{n-r}) \to \tilde{K}_1(\Sigma^{1,1}\tilde{\P}^{n-r+1}_{n-r-1}).
    \]
    We conclude from Lemma \ref{lem:NoRetract2} that $n-r+2 \equiv 3 \pmod{24}$, or $n-r \equiv 1 \pmod{24}$.

    \textbf{Case 2: $n-r=2$}. We wish to show a section of $p: V_3(\A^4)\to V_2(\A^4)$ does not exist. If there were such a section, then $p: V_2(\A^3) \to V_1(\A^3)$ has a section as well by Lemma \ref{lem:Sec2}. If $\bar{\alpha}_2,\bar{\alpha}_3$ denote the mod-2 reductions of the $\m$-algebra generators of $\h^{*,*}(V_2(\A^3),\Z)$, then the calculation $\text{Sq}^2(\bar{\alpha}_2) = \bar{\alpha}_3$ of \cite[Theorem 20]{Williams2012} (or \cite[Proposition 2.3]{Primozic2022} for the case of $\char(k)=2$) shows that a section of $p: V_2(\A^3) \to V_1(\A^3)$ does not exist. 
\end{proof}

\subsection{Examples of stably free modules without free summands}\label{sec:Modules}

Recall from the introduction that \cite[Proposition 2.4]{Raynaud1968} asserts that the map $V_{r+\ell}(\A^n) \to V_r(\A^n)$ has a section over $k$ if and only if the universal stably free module $P_{n,n-r}$ has a free summand of rank $\ell$. Theorems \ref{thm:l>1} and \ref{thm:l=1} thus have the following interpretation.

\begin{theorem}\label{thm:Main}
  Let $k$ be a field, and $r,n$ be positive integers satisfying $2 \le r \le n-2$. There is a $k$-algebra $R$ and a stably free $R$-module $P$ of type $(n,n-r)$ that does not admit a free summand of rank 2. Suppose further that $n-r \not\equiv 1 \pmod{24}$, then $P$ may be chosen so as not to admit a free summand of rank 1.
\end{theorem}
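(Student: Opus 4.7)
The plan is essentially a direct translation of Theorems \ref{thm:l>1} and \ref{thm:l=1} via the Raynaud correspondence between sections of the Stiefel projections and free summands of the universal stably free module.

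First I would set $R := A_{n,n-r}$ and $P := P_{n,n-r}$, the universal pair described in the introduction, recalling from \cite[Proposition 2.4]{Raynaud1968} that $P$ has a free summand of rank $\ell$ as an $R$-module if and only if the projection $p: V_{r+\ell}(\A^n) \to V_r(\A^n)$ admits a section over $k$. Since $P$ is by construction stably free of type $(n,n-r)$, it remains only to rule out free summands of the relevant ranks.

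For the first assertion, I would take $\ell = 2$. The hypothesis $2 \le r \le n-2$ together with $\ell = 2$ means $r \ge 2$, $\ell \ge 2$, and $r + \ell \le n$, so Theorem \ref{thm:l>1} applies and tells us that $p: V_{r+2}(\A^n) \to V_r(\A^n)$ has no section over $k$. By Raynaud's correspondence, $P$ has no free summand of rank $2$.

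For the second assertion, assume further that $n-r \not\equiv 1 \pmod{24}$ and take $\ell = 1$. The hypotheses $r \ge 2$ and $r \le n-2$ match the hypotheses of Theorem \ref{thm:l=1}, whose contrapositive states that under this congruence condition the projection $p: V_{r+1}(\A^n) \to V_r(\A^n)$ admits no section over $k$. Again by \cite[Proposition 2.4]{Raynaud1968}, $P$ has no free summand of rank $1$, and in particular (by \Cref{lem:Sec1}, or simply because a rank-$2$ free summand would contain a rank-$1$ one) the same $P$ simultaneously witnesses both assertions. There is no real obstacle here; the content of the theorem lies entirely in Theorems \ref{thm:l>1} and \ref{thm:l=1}, and this section is a bookkeeping step recording the module-theoretic consequence.
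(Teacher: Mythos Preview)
Your proposal is correct and matches the paper's approach exactly: the paper states the theorem as an immediate reinterpretation of Theorems~\ref{thm:l>1} and~\ref{thm:l=1} via \cite[Proposition~2.4]{Raynaud1968}, with no further argument given. Your write-up simply spells out this translation, and there is nothing to add.
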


\appendix

\section{Connectivity and essentially smooth base change}\label{sec:BaseChange}

Let $E/k$ be a field extension. Given a Nisnevich sheaf $\sh{X}$ over $k$, we denote by $\sh{X}_E$ the base change of $\sh{X}$ along $E/k$. We set out to prove the following lemma, which shows that connectivity interacts well with base change along $E/k$, at least when $k$ is perfect. This result is probably known to experts (see the appendix of \cite{Hoyois2015} and \cite[Paragraph 2.1.5]{Asok2024}), but does not seem to exist in the literature. 

\begin{lemma}\label{lem:BaseChange}
  Let $k$ be a perfect field, and suppose $f:\sh{X} \to \sh{Y}$ is a map in $\Shv_{\Nis}(\Sm_k)$. If $E/k$ is an extension and $n \ge -2$ is an integer, the following hold:
  \begin{enumerate}
    \item\label{i} If $\sh{X}$ is $\A^1$-$n$-connected, then $\sh{X}_E$ is $\A^1$-$n$-connected.
    \item\label{ii} If $f$ is $\A^1$-$n$-connected, then $f_E: \sh{X}_E \to \sh{Y}_E$ is $\A^1$-$n$-connected. 
  \end{enumerate}
\end{lemma}

\subsection{Points and connectivity}\label{sec:Points}
Let $T$ be a Noetherian scheme of finite Krull dimension, and let $\EuScript{S}$ denote the $\infty$-category of spaces. Following \cite[Remark 6.5.4.7]{Lurie2009}, a \emph{point} $p=(p^*,p_*)$ of $\Shv_{\Nis}(\Sm_T)$ is an adjunction 
\[
  p^* : \Shv_{\Nis}(\Sm_T) \rightleftarrows \EuScript{S} : p_*
\]
where the left adjoint $p^*$ preserves finite limits. 

A pair $(U,u)$, where $U$ is an object of $\Sm_T$ and $u \in U$, induces a point $p=(p^*,p_*)$ where $p^*$ is the Nisnevich stalk functor at $u \in U$. That is, 
for an object $\sh{X}$ of $\Shv_{\Nis}(\Sm_T)$, 
\[
  p^*\sh{X} = \colim_{V \in \sh{N}_u}\sh{X}(V)
\]
where $\sh{N}_u$ is the cofiltered diagram of Nisnevich open neighborhoods of $u \in U$. Points of this form make up a \emph{conservative family of points}. That is, a morphism $f: \sh{X} \to \sh{Y}$ in $\Shv_{\Nis}(\Sm_T)$ is an equivalence if and only if $p^*f$ is an equivalence for all points $p$ induced by pairs $(U,u)$ (see \cite[Lemma 3.1.11]{Morel1999}). We refer to the family of points induced by pairs $(U,u)$, where $U$ varies over all objects of $\Sm_T$ and $u$ varies over all points of $U$, as \emph{the conservative family over $T$}. A consequence of conservativity is that a morphism $f:\sh{X} \to \sh{Y}$ in $\Shv_{\Nis}(\Sm_T)$ is $n$-connected if and only if $p^*f: p^*\sh{X} \to p^*\sh{Y}$ is an $n$-connected map of spaces (i.e., has $n$-connected homotopy fibres) for all points $p=(p^*,p_*)$ in the conservative family over $T$.  

\subsection{Technical lemmas}
We summarize results from the appendix of \cite{Hoyois2015} that we will need. 

\begin{definition}\label{def:EssSmooth}
  A morphism of schemes $T \to T'$ is \emph{essentially smooth} if $T$ may be written as a cofiltered limit $\lim_\alpha T_\alpha$ where $T_\alpha$ is smooth over $T'$ and the transition morphisms $T_\alpha \to T_\beta$ are affine.
\end{definition}

As in the above situation, suppose $j: \Spec E \to \Spec k$ is a morphism of schemes corresponding to an extension $E/k$ with $k$ perfect. The following lemmas, all due to M. Hoyois, can be found in \cite{Hoyois2015}.

\begin{lemma}[{\cite[Lemma A.2]{Hoyois2015}}]\label{lem:1}
  The morphism $j: \Spec E \to \Spec k$ is essentially smooth. 
\end{lemma}

\begin{lemma}[{\cite[Lemma A.4]{Hoyois2015}}]\label{lem:2}
  The base change $j^*:\Pre(\Sm_k) \to \Pre(\Sm_E)$ preserves $\A^1$-local and Nisnevich-local objects. 
\end{lemma}
For the remainder of this appendix, we fix a presentation $\Spec E = \lim_{\alpha} T_\alpha$ as in Definition \ref{def:EssSmooth} (which exists by Lemma \ref{lem:1}). We remark that each $T_\alpha$ is affine but not necessarily a field. Let $f_\alpha: \sh{X}_\alpha \to \sh{Y}_\alpha$ denote the base change of $f: \sh{X} \to \sh{Y}$ to $T_\alpha$, and let $\Map_\alpha(-,-)$ denote the space of maps in $\Shv_{\Nis}(\Sm_{T_\alpha})$. 
\begin{lemma}[{\cite[Lemma A.5]{Hoyois2015}}]\label{lem:3}
  If $D:I \to \Sm_E$ is a finite diagram which is a cofiltered limit of diagrams $D_\alpha: I \to \Sm_{T_\alpha}$, and $\sh{X}$ is an object of $\Shv_{\Nis}(\Sm_k)$, then the natural map
  \[
    \colim_\alpha \Map_\alpha(\colim D_\alpha , \sh{X}_{\alpha}) \to \Map_E(\colim D, \sh{X}_E)
  \]
  is an equivalence.
\end{lemma}
We now turn to the proof of Lemma \ref{lem:BaseChange}.
\begin{proof}[Proof of Lemma \ref{lem:BaseChange}]
  By considering the map $\sh{X} \to *$, it suffices to prove \eqref{ii} of Lemma \ref{lem:BaseChange}. Also, using Lemma \ref{lem:2}, it suffices to show that if $f:\sh{X} \to \sh{Y}$ is $n$-connected, then $f_E:\sh{X}_E \to \sh{Y}_E$ is $n$-connected. The strategy is first to prove that $f_\alpha: \sh{X}_\alpha \to \sh{Y}_\alpha$ is $n$-connected, and then use Lemma \ref{lem:3} to prove that $f_E$ is $n$-connected. 

  \textbf{The proof that $f_\alpha$ is $n$-connected.} It suffices to show that $p^*f_\alpha$ is $n$-connected for all points $p$ in the conservative family over $T_\alpha$. Any point of $\Shv_{\Nis}(\Sm_{T_\alpha})$ induced by $(U_\alpha,u_\alpha)$, where $U_\alpha$ is smooth over $T_\alpha$ and $u_\alpha \in U_\alpha$, is also a point in the conservative family over $k$, as $U_\alpha$ is smooth over $k$. We conclude.

  \textbf{The proof that $f_E$ is $n$-connected.} 
  Let $p$ be a point in the conservative family over $E$ induced by the pair $(U,u)$. We may write $U = \lim_\alpha U_\alpha$ where $U_\alpha$ are objects of $\Sm_{T_\alpha}$ (see \cite[Appendix A]{Hoyois2015}). For each $\alpha$, the element $u \in U$ determines an element $u_\alpha \in U_\alpha$ via the structure morphism $U \to U_\alpha$. 
  
  Using \cite[Th\'{e}or\`{e}me 8.8.2 (2)]{Grothendieck1966}, each $V \to U$ in $\sh{N}_u$ is the limit of a diagram $(V_\alpha)$ over $(U_\alpha)$, and we may suppose each $V_\alpha$ is an object of $\Sm_{T_\alpha}$. Using \cite[Proposition 17.7.8]{Grothendieck1967}, we may further suppose that each $V_\alpha \to U_\alpha$ is a Nisnevich open neighbourhood of $u_\alpha$. In other words, the diagram $\sh{N}_u$ is the cofiltered limit of the diagrams $\sh{N}_{u_\alpha}$. It follows that there is an equivalence, natural in $\sh{X}$, 
  \[
    \colim_\alpha \colim_{V_\alpha \in \sh{N}_{u_\alpha}} \Map_\alpha (V_\alpha,\sh{X}_\alpha) \homeq \colim_{V \in \sh{N}_u} \colim_\alpha \Map_\alpha (V_\alpha, \sh{X}_\alpha).
  \]
  Composing with the equivalence of Lemma \ref{lem:3}, we obtain a natural equivalence
  \[
    \colim_\alpha \colim_{V_\alpha \in \sh{N}_{u_\alpha}} \Map_\alpha (V_\alpha,\sh{X}_\alpha) \to \colim_{V \in \sh{N}_u} \Map_E(V,\sh{X}_E).
  \]
  If we denote by $p_\alpha=(p_\alpha^*,{p_\alpha}_*)$ the point of $\Shv_{\Nis}(\Sm_{T_\alpha})$ corresponding to $(U_\alpha,u_\alpha)$, then there are natural equivalences
  \[
    \colim_{V \in \sh{N}_u} \Map_E(V,\sh{X}_E) \homeq p^* \sh{X}_E, \qquad \colim_\alpha \colim_{V_\alpha \in \sh{N}_{u_\alpha}} \Map_\alpha (V_\alpha,\sh{X}_\alpha) \homeq \colim_\alpha p_\alpha^* \sh{X}_\alpha.
  \]
  Hence, we have a diagram 
  \[
    \begin{tikzcd}[column sep = 6em]
      \colim_\alpha p_\alpha^*\sh{X}_\alpha \rar{\colim_\alpha p_\alpha^*(f_\alpha)} \dar & \colim_\alpha p_\alpha^*\sh{Y}_\alpha \dar \\
      p^*\sh{X}_E \rar{p^*f_E} & p^* \sh{Y}_E
    \end{tikzcd}
  \]
  where the vertical maps may be identified with the equivalences of Lemma \ref{lem:3}. Since each $f_\alpha$ is $n$-connected and any filtered colimit of $n$-connected maps of spaces is $n$-connected, we are done. 
\end{proof}

\section{\texorpdfstring{The proof of Lemma \ref{lem:IncBasepoint}}{The proof of Lemma 5.6}}\label{sec:ProofOfInc}

We will call a closed immersion of smooth $k$-schemes $Z \hookrightarrow X$ a \emph{smooth pair}, which we write as $(X,Z)$. We denote the normal bundle of $Z$ in $X$ by $N_Z X$ and its associated Thom space $\Th(N_Z X)$. Following \cite{Hoyois2017} and \cite{Asok2023}, a \emph{map of smooth pairs} $f: (X,Z) \to (X',Z')$ is a morphism of $k$-schemes $f: X \to X'$ such that $f$ restricts to a morphism of $k$-schemes $Z \to Z'$. We say that $f$ is \emph{transversal}, or \emph{$f$ is transverse to $Z'$}, if $f^{-1}(Z') = Z$ and the induced map of vector bundles
\[ N_Z X \to f^* N_{Z'} X' \]
is an isomorphism. 

We use the formulation of homotopy purity in \cite[Theorem 2.3.1]{Asok2023}. In particular, if $f$ is transversal, there is a commuting square 
\begin{equation}\label{eq:NatOfPurity}
    \begin{tikzcd}
      X/(X \sm Z) \dar \rar["\weq"] & \Th(N_Z X) \dar \\
      X' / (X/ \sm Z') \rar["\weq"] & \Th(N_{Z'} X')
    \end{tikzcd}
  \end{equation}
where the horizontal maps are equivalences, and the vertical maps are induced by $f$. In what follows, we will refer to the commutativity of \eqref{eq:NatOfPurity} as ``the naturality of purity.'' 

Recall from \Cref{sec:frn} that there is an $\A^1$-cofibre sequence
\begin{equation}\label{eq:StiefelCof}
  V_{r-1}(\A^{n-1}) \xrar{i} V_r(\A^n) \to \Sigma^{2n-1,n} V_{r-1}(\A^{n-1})_+
\end{equation}
We sketch the construction of \eqref{eq:StiefelCof} since it will be important to us in our proof of Lemma \ref{lem:IncBasepoint}. We refer to \cite[Section 3.2]{Peng2023} for details (see also \cite[Proposition 4.2]{Rondigs2023}). 

It will suffice to sketch the construction of the analogous $\A^1$-cofibre sequence 
\[
  V_{r-1}'(\A^{n-1}) \xrar{i'} V_r'(\A^n) \to \Sigma^{2n-1,n} V_{r-1}'(\A^{n-1})_+,
\]
which is $\A^1$-equivalent to \eqref{eq:StiefelCof} (see \Cref{sec:Stiefel}).

In this appendix, we will regard $V_r'(\A^n)$ as the $k$-scheme given by full-rank $n \times r$ matrices. Let $\{ a_{i,j} \mid 1 \leq i \leq n, 1 \leq j \leq r \}$ be matrix coordinates for $V_r'(\A^n)$. We define $Z$ to be the closed subscheme of $V_r'(\A^n)$ given by
\begin{equation}\label{eq:ZDef}
  Z:= \{a_{1,r} = a_{2,r} = \cdots = a_{n-1,r} = 0\} \hookrightarrow V_r'(\A^n).
\end{equation}
In other words, $Z$ consists of those full-rank $n \times r$ matrices of the form 
\[
  \begin{bmatrix}
    * & \cdots & * & 0\\
    \vdots & \ddots &  & \vdots \\
    * & & * & 0 \\
    * & \cdots & * & * 
  \end{bmatrix}. 
\]
where $*$ denotes a possibly nonzero entry. There is a naive $\A^1$-deformation retraction of $Z$ onto the closed subscheme 
\[ 
  \{a_{1,r} = a_{2,r} = \cdots = a_{n-1,r} = a_{n,1} = \cdots = a_{n,r-1} = 0\} \iso V_{r-1}'(\A^{n-1}) \times \G_m.
\]
We may view $V_{r-1}'(\A^{n-1}) \times \A^{n-1}$ as a closed subscheme of $V_r'(\A^n)$ via the inclusion
\begin{align*}
V_{r-1}'(\A^{n-1}) \times \A^{n-1} &\hookrightarrow V_r'(\A^n) \\
  (A, (x_1, \dots, x_{n-1})) &\mapsto 
  \left[ 
    \begin{array}{ccc|c}
      & & & x_1 \\
      & A & & \vdots \\
      & & & x_{n-1} \\\hline
      0 & \cdots & 0 & 1
    \end{array}
  \right].
\end{align*}
It is shown in \cite[Section 3.2]{Peng2023} that the inclusion 
\begin{equation}\label{eq:CompZEq}
  V_{r-1}'(\A^{n-1}) \times (\A^{n-1}\sm \bar{0}) \hookrightarrow V_r'(\A^n) \sm Z
\end{equation}
is an $\A^1$-equivalence. Since $Z \hookrightarrow V_r'(\A^n)$ is of codimension $n-1$ and has a trivial normal bundle, the associated purity $\A^1$-cofibre sequence is
\[
  V_r'(\A^n) \sm Z \to V_r'(\A^n) \to \Sigma^{2n-2,n-1} (V_{r-1}'(\A^{n-1}) \times \G_m)_+.
\]
The inclusions provide a map of smooth pairs. 
\begin{equation}\label{eq:SmPair}
  (V_{r-1}'(\A^{n-1}) \times \A^{n-1}, Z \cap (V_{r-1}'(\A^{n-1}) \times \A^{n-1})) \to (V_r'(\A^n), Z)
\end{equation}
where the intersection $Z \cap (V_{r-1}'(\A^{n-1}) \times \A^{n-1}) = V_{r-1}'(\A^{n-1}) \times \bar{0}$ is transverse. Moreover, the inclusion
\[
  Z \cap (V_{r-1}'(\A^{n-1}) \times \A^{n-1}) \to Z
\]
is, up to homotopy, given by 
\[
  \id \times 1: V_{r-1}'(\A^{n-1}) \times \Spec k \to V_{r-1}'(\A^{n-1}) \times \G_m 
\]
Putting everything together, \eqref{eq:SmPair} induces a map of purity cofibre sequences 
\begin{equation}\label{eq:Purity1}
  \begin{tikzcd}[column sep = small]
    V_{r-1}'(\A^{n-1}) \times (\A^{n-1} \setminus \bar{0}) \rar \dar["\weq"] & V_{r-1}'(\A^{n-1}) \times \A^{n-1} \rar \dar & \Sigma^{2n-2,n-1} (V'_{r-1}(\A^{n-1}) \times \Spec k)_+ \dar["\Sigma^{2n-2,n-1}(\id \times 1)_+"] \\
    V_r'(\A^n) \sm Z \rar & V_r'(\A^n) \rar & \Sigma^{2n-2,n-1} (V_{r-1}'(\A^{n-1}) \times \G_m)_+
  \end{tikzcd}
\end{equation}
where the right vertical map is as indicated by the naturality of homotopy purity. Since the rows of \eqref{eq:Purity1} are $\A^1$-cofibre sequences and the left vertical map is an $\A^1$-equivalence, the right square is a pushout (after $\A^1$-localization). It follows that $\cof_{\A^1}(i')$ is equivalent to the $\A^1$-cofibre of the right vertical map in \eqref{eq:Purity1}, which is
\[
  \Sigma^{2n-2,n-1} (V_{r-1}'(\A^{n-1})_+ \wedge \G_m) \homeq \Sigma^{2n-1,n} V_{r-1}'(\A^{n-1})_+,
\]
establishing the $\A^1$-cofibre sequence \eqref{eq:StiefelCof}.

We now turn to the proof of Lemma \ref{lem:IncBasepoint}. Recall that Lemma \ref{lem:IncBasepoint} asserts that the map $\psi: S^{2n-1,n} \to \Sigma^{2n-1,n}V_{r-1}(\A^{n-1})_+$ obtained by taking $\A^1$-cofibres of the horizontal maps in 
\[
  \begin{tikzcd}
    \Sigma^{1,1} \tilde{\P}_{n-r}^{n-2} \dar["f_{r-1}^{n-1}"] \rar["\Sigma^{1,1} \tilde{\imath}"] & \Sigma^{1,1} \tilde{\P}_{n-r}^{n-1} \dar["f_r^n"] \\
    V_{r-1}(\A^{n-1}) \rar["i"] & V_r(\A^n)
  \end{tikzcd}
\]
is given by $\Sigma^{2n-1,n}(-)_+$ applied to the inclusion of the basepoint $\Spec k \to V_{r-1}(\A^{n-1})$.

We denote the restriction of $f_n': \tilde{\P}^{n-1} \times \G_m \to \GL_n$ to any subscheme of $\tilde{\P}^{n-1} \times \G_m$ also by $f_n'$. Our proof is presented through a series of reductions. The idea is to reduce to the case of $r=n$, then lift the diagram \eqref{eq:Purity1} to $\tilde{\P}^{n-1} \times (\G_m \sm 1)$ via $f_n'$ so that we can describe maps of Thom spaces of normal bundles using the naturality of the purity equivalence. One difficulty is that the map $f_n'$ is not transverse to the subscheme $Z \hookrightarrow \GL_n$ (see \eqref{eq:ZDef}). However, the restriction of $f_n'$ to $\tilde{\P}^{n-1} \times (\G_m \sm 1)$ is transverse to $Z$. 

In the case that the base field $k$ is perfect and has finite $2$-\'etale cohomological dimension, a much simpler proof of Lemma \ref{lem:IncBasepoint} can be obtained using the cohomology calculation of \cite[Theorem 18]{Williams2012} and the conservativity theorem of \cite[Theorem 16]{Bachmann2018}. We provide a geometric proof that can be easily adapted to an arbitrary base, though we will restrict our attention to the base being a field. 

We begin by reducing the proof of Lemma \ref{lem:IncBasepoint} to the case of $r=n$. It follows from the naturality of the construction of \eqref{eq:StiefelCof} and of the purity equivalence that the map 
\[ \Sigma^{2n-1,n} (\GL_{n-1})_+ \to \Sigma^{2n-1,n} V_{r-1}(\A^{n-1})_+ \] 
induced by taking $\A^1$-cofibres of the horizontal maps in the commuting square
  \[
    \begin{tikzcd}
      \GL_{n-1} \rar["i"] \dar["p"] & \GL_n \dar["p"] \\
      V_{r-1}(\A^{n-1}) \rar["i"] & V_{r}(\A^n)
    \end{tikzcd}
  \]
is $\Sigma^{2n-1,n} p_+$. Since $p$ is a pointed map, to prove Lemma \ref{lem:IncBasepoint} it suffices to treat the case of $r=n$.

Let $Y$ denote the (scheme-theoretic) fibre over the $k$-rational point $[0: \cdots : 0 :1]$ of the map $\pi: \tilde{\P}^{n-1} \to \P^{n-1}$ of Remark \ref{rem:ProjWE}. The fibre $Y$ is thus isomorphic to the affine space $\A^{n-1}$, and the $E$-rational points of $Y$, for $E/k$ a field extension, consists of pairs $(\Span\{e_n\},W)$ where $e_n \in E^n$ is the $n$th standard basis vector, $W \subseteq E^n$ is a hyperplane, and $\Span\{e_n\} + W = E^n$ (see Remark \ref{rem:FPoints}). 

There is an $\A^1$-deformation retraction of $Y$ onto the $k$-rational point 
\[  p_0 := (\Span \{e_n\}, \Span \{e_1, \dots, e_{n-1}\}). \]
Moreover, the inclusion $\tilde{\imath}: \tilde{\P}^{n-2} \hookrightarrow \tilde{\P}^{n-1} \sm Y$ is an $\A^1$-equivalence. Being the fibre of $\pi$ over a $k$-rational point, the subscheme $Y$ has codimension $n-1$ and a trivial normal bundle in $\tilde{\P}^{n-1}$. The purity $\A^1$-cofibre sequence associated with the closed inclusion $Y \times \G_m \hookrightarrow \tilde{\P}^{n-1} \times \G_m$ is thus 
\[
  \tilde{\P}^{n-2} \times \G_m \xrar{\tilde{\imath} \times \id} \tilde{\P}^{n-1} \times \G_m \to \Sigma^{2n-2,n-1}(p_0\times \G_m)_+.
\]
Similarly, there is a purity $\A^1$-cofibre sequence
\[
  \tilde{\P}^{n-2} \times (\G_m \sm 1) \xrar{\tilde{\imath} \times \id} \tilde{\P}^{n-1} \times (\G_m \sm 1) \to \Sigma^{2n-2,n-1}(p_0\times (\G_m \sm 1))_+.
\]

There are maps of $\A^1$-cofibre sequences
\[
  \begin{tikzcd}
    \tilde{\P}^{n-2} \times (\G_m \sm 1) \dar[hook] \rar["\tilde{\imath} \times \id"] & \tilde{\P}^{n-1} \times (\G_m \sm 1) \dar[hook] \rar & \Sigma^{2n-2,n-1}(p_0\times (\G_m \sm 1))_+ \dar["\Sigma^{2n-2,n-1}(\id \times \jmath)_+"] \ar[dd, bend left, "\psi''", start anchor = south east, end anchor = north east] \\
    \tilde{\P}^{n-2} \times \G_m \rar["\tilde{\imath} \times \id"] \dar["f_{n-1}'"] & \tilde{\P}^{n-1} \times \G_m \dar["f_n'"] \rar & \Sigma^{2n-2,n-1}(p_0\times \G_m)_+ \dar["\psi'"] \\
    \GL_{n-1} \rar["i"] & \GL_n \rar & \Sigma^{2n-2,n-1}(\GL_{n-1})_+ \wedge \G_m
  \end{tikzcd}
\]
where we have let $\psi',\psi''$ denote induced maps of $\A^1$-cofibres, and $\jmath: \G_m \sm 1 \hookrightarrow \G_m$ is the inclusion.

In particular, we have a commuting triangle
\begin{equation}\label{eq:psi'psi''}
  \begin{tikzcd}[row sep = large]
    \Sigma^{2n-2,n-1}(p_0 \times (\G_m \sm 1))_+ \dar["\Sigma^{2n-2,n-1}(\id \times \jmath)_+"'] \ar[dr,"\psi''"] & \\
    \Sigma^{2n-2,n-1}(p_0 \times \G_m)_+ \rar["\psi'"] & \Sigma^{2n-2,n-1} (\GL_{n-1}) \wedge \G_m \: .
  \end{tikzcd}
\end{equation}
The following lemma reduces Lemma \ref{lem:IncBasepoint} to an investigation of the map $\psi'$.

\begin{lemma}\label{lem:psi'Impliespsi}
  Suppose $\psi'$ is (up to homotopy) given by $\Sigma^{2n-2,n-1}(-)$ applied to the composition of pointed maps
  \[
    (p_0 \times \G_m)_+ \xrar{(I_{n-1} \times \id)_+} (\GL_{n-1} \times \G_m)_+ \to (\GL_{n-1})_+ \wedge \G_m
  \]
  where the last map is the canonical collapse. Then Lemma \ref{lem:IncBasepoint} holds.
\end{lemma}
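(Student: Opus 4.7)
The plan is to use the naturality of the purity cofibre sequence \eqref{eq:StiefelCof} to reduce to the case $r=n$, as sketched in the discussion preceding the lemma. After this reduction, it suffices to show that the cofibre-induced map $\psi\co S^{2n-1,n}\to\Sigma^{2n-1,n}(\GL_{n-1})_+$ arising from the comparison square involving $f_{n-1}, f_n, i$ is $\Sigma^{2n-1,n}$ applied to the pointed basepoint inclusion $(\Spec k)_+ \to (\GL_{n-1})_+$. The key device is the commuting square
\[
  \begin{tikzcd}
    \Sigma^{2n-2,n-1}(p_0 \times \G_m)_+ \rar["\psi'"] \dar["\alpha"] & \Sigma^{2n-2,n-1}((\GL_{n-1})_+ \wedge \G_m) \dar[equals] \\
    S^{2n-1,n} \rar["\psi"] & \Sigma^{2n-1,n}(\GL_{n-1})_+
  \end{tikzcd}
\]
in which $\alpha$ is induced on cofibres by the collapse maps $\tilde{\P}^{k}\times\G_m\to\Sigma^{1,1}\tilde{\P}^{k}_+$ for $k=n-2, n-1$; since by construction $f_k$ is obtained by factoring $f_k'$ through this collapse, one has $\psi'=\psi\circ\alpha$.

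Next, I would identify $\alpha$ and $\psi'$ explicitly under stable splittings of source and target. Using $p_0\times\G_m\iso\G_m$, the map $\alpha$ is $\Sigma^{2n-2,n-1}$ of the canonical basepoint-collapse $(\G_m)_+\to\G_m$ sending the disjoint basepoint to $1$. The stable James splitting $\Sigma(\G_m)_+\homeq\Sigma\G_m\vee S^1$ gives $\Sigma^{2n-2,n-1}(\G_m)_+\homeq S^{2n-1,n}\vee S^{2n-2,n-1}$, under which $\alpha$ becomes projection onto the first summand. On the target side, for a pointed space $X$ (here $X=\GL_{n-1}$ with basepoint $I_{n-1}$) and pointed $Y$ (here $Y=\G_m$) there is a stable splitting $X_+\wedge Y\homeq Y\vee(X\wedge Y)$ with the $Y$-summand embedded via $y\mapsto [x_0,y]$; this yields $\Sigma^{2n-2,n-1}((\GL_{n-1})_+\wedge\G_m)\homeq S^{2n-1,n}\vee\Sigma^{2n-1,n}\GL_{n-1}$. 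The hypothesis on $\psi'$ is that it is $\Sigma^{2n-2,n-1}$ of the composite $(\G_m)_+\to(\GL_{n-1}\times\G_m)_+\to(\GL_{n-1})_+\wedge\G_m$ sending $\lambda\mapsto[I_{n-1},\lambda]$, which under the above splittings is the wedge-summand inclusion $\G_m\hookrightarrow\G_m\vee(\GL_{n-1}\wedge\G_m)$ on the $\G_m$-summand of $(\G_m)_+$ and zero on the $S^0$-summand. Combined with the identification of $\alpha$ as a projection, the equation $\psi\circ\alpha=\psi'$ then forces $\psi$ to equal the inclusion $S^{2n-1,n}\hookrightarrow S^{2n-1,n}\vee\Sigma^{2n-1,n}\GL_{n-1}\homeq\Sigma^{2n-1,n}(\GL_{n-1})_+$ of the first wedge summand.

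Finally, I would verify that this wedge-summand inclusion coincides with $\Sigma^{2n-1,n}$ of the pointed basepoint inclusion $(\Spec k)_+=S^0\to(\GL_{n-1})_+$. Under the stable splitting $(\GL_{n-1})_+\homeq\GL_{n-1}\vee S^0$ obtained by taking $I_{n-1}$ as basepoint of $\GL_{n-1}$, the map $(\Spec k)_+\to(\GL_{n-1})_+$ is precisely the inclusion of the $S^0$-summand. Smashing with $S^{2n-1,n}=S^{2n-2,n-1}\wedge\G_m$ produces the same $S^{2n-1,n}$-summand of $\Sigma^{2n-1,n}(\GL_{n-1})_+$ identified above, since both record the image of the basepoint $I_{n-1}\in\GL_{n-1}$ under smashing with $\G_m$. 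The principal technical annoyance throughout is simply to keep the various stable splittings and their basepoint conventions aligned; the analysis itself is formal once these identifications are in place.
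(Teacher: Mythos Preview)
Your proposal is correct and follows essentially the same approach as the paper: both set up the commuting triangle relating $\psi'$, $\psi$, and the collapse map $\alpha$, identify $\alpha$ as (the suspension of) the canonical collapse $(p_0\times\G_m)_+\to(p_0)_+\wedge\G_m$, and then deduce the form of $\psi$ from the hypothesis on $\psi'$. The paper simply asserts ``the result follows'' after identifying $\alpha$; your use of the splitting $\Sigma^{2n-2,n-1}(\G_m)_+\homeq S^{2n-1,n}\vee S^{2n-2,n-1}$ to exhibit $\alpha$ as a split projection is a legitimate way to make that final step precise, since the relation $\psi\circ\alpha=\psi'$ alone does not determine $\psi$ without knowing $\alpha$ admits a section.
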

\begin{proof}
  The map $\psi$ is induced by $\psi'$, so that
  \begin{equation}\label{eq:PsiPsi'}
    \begin{tikzcd}[row sep = large]
      \Sigma^{2n-2,n-1}(p_0 \times \G_m)_+ \dar \ar[dr,"\psi'"] & \\
      \Sigma^{2n-2,n-1}(p_0)_+ \wedge \G_m \rar["\psi"] & \Sigma^{2n-2,n-1}(\GL_{n-1})_+ \wedge \G_m
    \end{tikzcd}  
  \end{equation}
  commutes. Moreover, the naturality of purity with respect to the inclusion of smooth pairs
  \[
    (\tilde{\P}^{n-1} \times 1, Y \times 1) \to (\tilde{\P}^{n-1} \times \G_m, Y \times \G_m),
  \]
  implies that the vertical map in \eqref{eq:PsiPsi'} is induced by the canonical collapse 
  \[
    p_0 \times \G_m \to (p_0)_+ \wedge \G_m.
  \]
  The result follows.
\end{proof}
Next, we reduce to an investigation of the map $\psi''$. To this end, we will need the following lemma. 
\begin{lemma}\label{lem:jstar}
  Let $\sh{X}$ be a motivic space and $p,q$ nonnegative integers with $p \ge q$. The map 
  \[
    [\Sigma^{p,q}(\G_m)_+,\sh{X}] \to [\Sigma^{p,q}(\G_m \sm 1)_+,\sh{X}]
  \]
  induced by pullback along $\Sigma^{p,q}\jmath_+$ in $\cat{H}(k)_*$ is injective. 
\end{lemma}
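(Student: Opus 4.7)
My plan is to show that $\Sigma^{p,q}\jmath_+$ admits a retraction in $\cat{H}(k)_*$, from which injectivity of the pullback follows by the Yoneda lemma. Since the closed immersion $\{1\} \hookrightarrow \G_m$ has trivial rank-one normal bundle, homotopy purity identifies the $\A^1$-cofibre of $\jmath_+$ as $S^{2,1}$ and yields the $\A^1$-cofibre sequence
\[
(\G_m \sm 1)_+ \xrightarrow{\jmath_+} (\G_m)_+ \xrightarrow{c} S^{2,1}.
\]
A retraction of $\Sigma^{p,q}\jmath_+$ exists in $\cat{H}(k)_*$ if and only if $\Sigma^{p,q}c \colon \Sigma^{p,q}(\G_m)_+ \to S^{p+2,q+1}$ is $\A^1$-null-homotopic (equivalently, the Puppe connecting map is trivial, so the previous term in the Puppe long exact sequence surjects onto nothing).

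To establish nullity, I first split off the basepoint. The $k$-rational point $1 \in \G_m$ gives a pointed map $1_+ \colon S^0 \to (\G_m)_+$ whose retraction collapses $\G_m$ to the non-basepoint of $S^0$, so the cofibre sequence $S^0 \to (\G_m)_+ \to S^{1,1}$ splits and $(\G_m)_+ \simeq S^0 \vee S^{1,1}$ in $\cat{H}(k)_*$. Consequently $\Sigma^{p,q}(\G_m)_+ \simeq S^{p,q} \vee S^{p+1,q+1}$, and $\Sigma^{p,q}c$ decomposes as a pair $(c_0, c_1)$ with $c_0 \colon S^{p,q} \to S^{p+2,q+1}$ and $c_1 \colon S^{p+1,q+1} \to S^{p+2,q+1}$. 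By Morel's motivic connectivity theorem, the target $S^{p+2,q+1}$ is $\A^1$-$(p+1)$-connected, while the sources $S^{p,q}$ and $S^{p+1,q+1}$ have Nisnevich cohomological dimensions at most $q$ and $q+1$ respectively (controlled by their $\G_m$-factors). Under the hypothesis $p \ge q$, one has $p+1 \ge q+1$, so the connectivity of the target meets or exceeds the cohomological dimension of each source.

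A motivic obstruction-theoretic argument in the spirit of \Cref{lem:CellApprox} then forces both $c_0$ and $c_1$ to vanish in $\cat{H}(k)_*$, so $\Sigma^{p,q}c$ is $\A^1$-null-homotopic, the cofibre sequence splits, and we obtain a retraction $r \colon \Sigma^{p,q}(\G_m)_+ \to \Sigma^{p,q}(\G_m \sm 1)_+$ of $\Sigma^{p,q}\jmath_+$. The pullback map is then split injective for every pointed motivic space $\sh{X}$. The main technical obstacle is justifying the obstruction-theoretic vanishing of $c_1$: this is precisely the boundary case $p=q$, where the cohomological dimension $q+1$ of $S^{p+1,q+1}$ equals the connectivity $p+1$ of $S^{p+2,q+1}$, so one must carry out a careful Postnikov/cellular argument using Morel's strictly $\A^1$-invariant structure on the relevant homotopy sheaves, and here the hypothesis $p \ge q$ is used essentially.
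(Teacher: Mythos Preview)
Your opening plan contains a genuine error: a retraction $r$ of $\Sigma^{p,q}\jmath_+$ (meaning $r\circ\Sigma^{p,q}\jmath_+=\id$) would make the pullback $(\Sigma^{p,q}\jmath_+)^*$ a split \emph{surjection}, not an injection. By contravariance one has $(\Sigma^{p,q}\jmath_+)^*\circ r^*=(r\circ\Sigma^{p,q}\jmath_+)^*=\id$, so $r^*$ is a right inverse of the pullback; equivalently, if the cofibre sequence split you would obtain $\Sigma^{p,q}(\G_m)_+\homeq\Sigma^{p,q}(\G_m\sm 1)_+\vee S^{p+2,q+1}$, and restriction along the inclusion of the first wedge summand is the projection onto a factor of a product, which is surjective but certainly not injective. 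What you actually need (and what your parenthetical about the Puppe sequence gestures toward) is the exact sequence
\[
[\Sigma^{p,q}\P^1,\sh X]\xrightarrow{\,(\Sigma^{p,q}c)^*\,}[\Sigma^{p,q}(\G_m)_+,\sh X]\xrightarrow{\,(\Sigma^{p,q}\jmath_+)^*\,}[\Sigma^{p,q}(\G_m\sm 1)_+,\sh X]:
\]
nullity of $c$ makes the first map zero, and exactness then gives injectivity of the second. This is precisely the route the paper takes.

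Granting that correction, your method for establishing that $c$ is null is far more laborious than necessary and, as you acknowledge, leaves the boundary case $p=q$ unresolved. The paper avoids obstruction theory entirely and shows $c$ itself is null \emph{before} any suspension: the Zariski cover of $\A^1$ by $\G_m$ and $\A^1\sm\{1\}$ gives a pushout square
\[
\begin{tikzcd}
\G_m\sm 1\rar["\jmath"]\dar & \G_m\dar\\
\A^1\sm\{1\}\rar & \A^1
\end{tikzcd}
\]
whose horizontal cofibres are both $\P^1$, with the induced map between them an $\A^1$-equivalence by naturality of purity. Hence $c:(\G_m)_+\to\P^1$ factors through $(\A^1)_+\homeq S^0$, and since $\P^1$ is $\A^1$-connected every pointed map $S^0\to\P^1$ is null. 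This two-line geometric argument makes no appeal to Morel's connectivity theorem, cohomological dimension bounds, or Postnikov towers, and the hypothesis $p\ge q$ plays no role beyond making the sphere $S^{p,q}$ well-defined.
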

\begin{proof}
  Consider the pushout square
  \[
    \begin{tikzcd}
      \G_m \sm 1 \ar[dr,phantom,very near end, "\ulcorner"] \dar[hook] \rar["\jmath"] & \G_m \dar[hook] \\
      \A^1 \sm 1 \rar[hook] & \A^1 
    \end{tikzcd}
  \]
  arising from the Zariski cover of $\A^1$ by $\A^1 \sm 1$ and $\G_m$. The cofibres of the horizontal maps are both $\A^1$-equivalent to $\P^1$, and the induced map between them is an $\A^1$-equivalence by the naturality of purity. Hence, the second map in the $\A^1$-cofibre sequence
  \[
    (\G_m \sm 1)_+ \xrar{\jmath_+} (\G_m)_+ \to \P^1
  \]
  is null. After applying the functor $\Sigma^{p,q}(-)$ followed by $[-,\sh{X}]$, one obtains an exact sequence 
  \[
    [\Sigma^{p,q}\P^1,\sh{X}] \xrar{0} [\Sigma^{p,q}(\G_m)_+,\sh{X}] \xrar{\Sigma^{p,q}\jmath_+^*} [\Sigma^{p,q}(\G_m \sm 1)_+,\sh{X}],
  \]
  and we conclude.
\end{proof}
We may now prove the following reduction. 

\begin{lemma}\label{lem:psi''Impliespsi}
  Suppose $\psi''$ is (up to homotopy) given by $\Sigma^{2n-2,n-1}(-)$ applied to the composition of pointed maps
  \[
    (p_0 \times (\G_m \sm 1))_+ \xrar{(\id \times \jmath)_+} (p_0 \times \G_m)_+ \xrar{(I_{n-1} \times \id)_+} (\GL_{n-1} \times \G_m)_+ \to (\GL_{n-1})_+ \wedge \G_m,
  \]
  where the last map is the canonical collapse. Then Lemma \ref{lem:IncBasepoint} holds.
\end{lemma}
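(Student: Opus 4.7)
The plan is to leverage the commuting triangle \eqref{eq:psi'psi''} together with \Cref{lem:jstar} to deduce the form of $\psi'$ from the hypothesized form of $\psi''$, and then to invoke \Cref{lem:psi'Impliespsi}. Concretely, \eqref{eq:psi'psi''} gives the identity $\psi'' \homeq \psi' \circ \Sigma^{2n-2,n-1}(\id \times \jmath)_+$ in $\cat{H}(k)_*$. On the other hand, the hypothesis exhibits $\psi''$ as the composition $\Sigma^{2n-2,n-1}(\id \times \jmath)_+$ followed by a second map $\xi$, namely $\Sigma^{2n-2,n-1}$ applied to $(I_{n-1} \times \id)_+$ composed with the canonical collapse $(\GL_{n-1} \times \G_m)_+ \to (\GL_{n-1})_+ \Smash \G_m$. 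Thus both $\psi'$ and $\xi$ are candidates whose precomposition with $\Sigma^{2n-2,n-1}(\id \times \jmath)_+$ recovers $\psi''$.

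The next step is to show such a candidate is unique up to pointed homotopy. Since $p_0 \iso \Spec k$, the natural equivalences
\[
  \Sigma^{2n-2,n-1}(p_0 \times \G_m)_+ \homeq \Sigma^{2n-2,n-1}(\G_m)_+, \qquad \Sigma^{2n-2,n-1}(p_0 \times (\G_m \sm 1))_+ \homeq \Sigma^{2n-2,n-1}(\G_m \sm 1)_+
\]
identify $\Sigma^{2n-2,n-1}(\id \times \jmath)_+$ with $\Sigma^{2n-2,n-1}\jmath_+$. Applying \Cref{lem:jstar} with $(p,q) = (2n-2, n-1)$ (so the hypothesis $p \ge q$ is satisfied) and target $\sh{X} = \Sigma^{2n-2,n-1}(\GL_{n-1})_+ \Smash \G_m$, I conclude that precomposition with $\Sigma^{2n-2,n-1}\jmath_+$ is injective on the relevant set of homotopy classes. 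Hence $\psi'$ and $\xi$ agree in $\cat{H}(k)_*$, so $\psi'$ has the form required by \Cref{lem:psi'Impliespsi}, and the latter lemma then delivers \Cref{lem:IncBasepoint}.

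I do not anticipate a substantial obstacle. The two key inputs --- the naturality of the purity equivalence (already encoded in the commuting triangle \eqref{eq:psi'psi''}) and the injectivity given by \Cref{lem:jstar} --- are both in hand, and the reduction through $p_0 \iso \Spec k$ is formal. The one point requiring mild care is ensuring that all maps in sight genuinely live in the pointed category so that \Cref{lem:jstar} applies: this is fine because $p_0$ and $I_{n-1}$ serve as basepoints and the collapse map is pointed by construction.
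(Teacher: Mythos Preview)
Your proposal is correct and follows essentially the same approach as the paper's proof, which simply says that \Cref{lem:jstar} together with the commutativity of \eqref{eq:psi'psi''} yields the hypothesis of \Cref{lem:psi'Impliespsi}, from which one concludes. You have merely unpacked this one-line argument in detail, including the identification $p_0 \iso \Spec k$ needed to match the format of \Cref{lem:jstar}.
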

\begin{proof}
  Lemma \ref{lem:jstar} and the commutativity of \eqref{eq:psi'psi''} imply that the supposition of Lemma \ref{lem:psi'Impliespsi} holds. We conclude by Lemma \ref{lem:psi'Impliespsi}.
\end{proof}

We now turn to establishing the supposition of Lemma \ref{lem:psi''Impliespsi}. Recall from \eqref{eq:ZDef} that $Z \hookrightarrow \GL_n$ is the closed subscheme of codimension $n-1$ defined by setting the coordinates $a_{1,n}, \dots, a_{n-1,n}$ equal to $0$. Let $T$ denote the (scheme-theoretic) pullback of the diagram 
\[
  \begin{tikzcd}
    T \ar[dr,phantom, very near start,"\lrcorner"] \ar[d] \ar[r] & \tilde{\P}^{n-1} \times (\G_m \sm 1) \dar["f_n'"] \\
    Z \rar[hook] & \GL_n
  \end{tikzcd}
\]
which we identify with a closed subscheme of $\tilde{\P}^{n-1} \times (\G_m \sm 1)$. The subscheme $T$ consists of two smooth, closed, disjoint components $T_1,T_2$, each of codimension $n-1$, whose $E$-rational points, for $E/k$ a field extension, are 
\[
  T_1(E) = \{ (L,W,\lambda) \in \tilde{\P}^{n-1}(E) \times (E^\times \sm 1) \mid e_n \in L \}, \quad T_2(E) = \{ (L,W,\lambda) \in \tilde{\P}^{n-1}(E) \times (E^\times \sm 1) \mid e_n \in W \},
\]
respectively. In particular, the map $f_n': \tilde{\P}^{n-1} \times (\G_m \sm 1) \to \GL_n$ is transverse to $Z \hookrightarrow \GL_n$. Note also that $T_1 = Y \times (\G_m \sm 1)$. Denote the Thom spaces of the normal bundles associated with the inclusion of $T$ and $T_i$ (for $i=1,2$) in $\tilde{\P}^{n-1} \times (\G_m \sm 1)$ by $\Th(N_T)$ and $\Th(N_{T_i})$, respectively. There are $\A^1$-equivalences
\[
  \Th(N_T) \homeq \Th(N_{T_1}) \vee \Th(N_{T_2}) \homeq \Sigma^{2n-2,n-1}(p_0\times (\G_m \sm 1))_+ \vee \Th(N_{T_2}).
\]
The map of smooth pairs
\[
  (\tilde{\P}^{n-1} \times (\G_m \sm 1) \sm T_1, T_2)  \to (\tilde{\P}^{n-1} \times (\G_m \sm 1), T)
\]
given by inclusion is transverse, so by naturality of the purity equivalence, it induces a map of purity $\A^1$-cofibre sequences
\[
  \begin{tikzcd}[column sep = small]
    (\tilde{\P}^{n-1} \times (\G_m \sm 1)) \sm T \rar \dar[equals] & (\tilde{\P}^{n-1} \times (\G_m \sm 1)) \sm T_1 \rar \dar[hook] & \Th(N_{T_2}) \dar["\inc_2"] \\
    (\tilde{\P}^{n-1} \times (\G_m \sm 1)) \sm T \rar & \tilde{\P}^{n-1} \times (\G_m \sm 1) \rar & \Sigma^{2n-2,n-1}(p_0\times (\G_m \sm 1))_+ \vee \Th(N_{T_2})
  \end{tikzcd}
\]
where the map denoted $\inc_2$ is the inclusion of the second summand. 

We conclude with the following lemma, which establishes Lemma \ref{lem:IncBasepoint} by Lemma \ref{lem:psi''Impliespsi}.

\begin{lemma}\label{lem:Psi''}
  The map $\psi''$ is (up to homotopy) given by $\Sigma^{2n-2,n-1}(-)$ applied to the composition of pointed maps
  \[
    (p_0 \times (\G_m \sm 1))_+ \xrar{(\id \times \jmath)_+} (p_0 \times \G_m)_+ \xrar{(I_{n-1} \times \id)_+} (\GL_{n-1} \times \G_m)_+ \to (\GL_{n-1})_+ \wedge \G_m,
  \]
  where the last map is the canonical collapse.
\end{lemma}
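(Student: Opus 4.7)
The plan is to exploit the naturality of the purity equivalence together with an explicit geometric computation of the restriction of $f_n'$ to $T_1 = Y \times (\G_m \sm 1)$. The key observation is that although $f_n'\co \tilde{\P}^{n-1} \times \G_m \to \GL_n$ is \emph{not} transverse to $Z$ (transversality fails along $\tilde{\P}^{n-1} \times \{1\}$, where the whole slice collapses to $I_n$), its restriction to $\tilde{\P}^{n-1} \times (\G_m \sm 1)$ \emph{is} transverse to $Z$. The preimage decomposes as $T = T_1 \sqcup T_2$, and it is the component $T_1$ that accounts for $\psi''$.

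First, both $\A^1$-cofibre sequences defining the column ending at $\psi''$ arise from purity: the top one from the closed inclusion $Y \times (\G_m \sm 1) \hookrightarrow \tilde{\P}^{n-1} \times (\G_m \sm 1)$ after the $\A^1$-deformation retraction $Y \to p_0$; the bottom one from $Z \hookrightarrow \GL_n$ after the $\A^1$-deformation retraction $Z \to \GL_{n-1} \times \G_m$, followed by the canonical collapse to the smash product (which is the last step in building \eqref{eq:StiefelCof}). By the splitting $\Th(N_T) \homeq \Th(N_{T_1}) \vee \Th(N_{T_2})$ exhibited just before the statement, and by naturality of purity applied to the transverse map of smooth pairs $(\tilde{\P}^{n-1} \times (\G_m \sm 1), T) \to (\GL_n, Z)$, the map $\psi''$ is identified with the composite
\[
  \Sigma^{2n-2,n-1}(p_0 \times (\G_m \sm 1))_+ \homeq \Th(N_{T_1}) \to \Th(N_Z) \homeq \Sigma^{2n-2,n-1}(\GL_{n-1} \times \G_m)_+ \to \Sigma^{2n-2,n-1}(\GL_{n-1})_+ \wedge \G_m,
\]
where the middle map is induced by the restriction $T_1 \to Z$.

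Second, I would compute this restriction directly. On $E$-points, a point of $T_1$ is a triple $(\Span\{e_n\}, W, \lambda)$ where $W$ is a hyperplane complementary to $\Span\{e_n\}$. Writing $W$ as the graph of a linear map $\Span\{e_1,\dots,e_{n-1}\} \to \Span\{e_n\}$ sending $e_i$ to $c_i e_n$, one checks that $f_n'$ sends this triple to the matrix with columns $e_i + c_i(1-\lambda)e_n$ for $i<n$ and $\lambda e_n$ for $i=n$, that is,
\[
  \begin{pmatrix} I_{n-1} & 0 \\ v & \lambda \end{pmatrix}, \qquad v = (c_1(1-\lambda),\dots,c_{n-1}(1-\lambda)).
\]
Under the retraction $Z \to \GL_{n-1}\times\G_m$ (extract upper-left block and $(n,n)$-entry) this goes to $(I_{n-1},\lambda)$, independent of the $c_i$. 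Hence the composite $T_1 \to \GL_{n-1}\times\G_m$ is $\A^1$-homotopic to the composition
\[
  Y \times (\G_m \sm 1) \;\xrightarrow{\;\text{retr}\times\jmath\;}\; p_0 \times \G_m \;\xrightarrow{\;I_{n-1}\times\id\;}\; \GL_{n-1}\times\G_m.
\]

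Assembling these ingredients, and applying $\Sigma^{2n-2,n-1}(-)_+$ followed by the canonical collapse to $(\GL_{n-1})_+ \wedge \G_m$, yields exactly the asserted factorization of $\psi''$. The principal difficulty lies in the bookkeeping for naturality of purity when the map of pairs arises only after restricting $f_n'$ to $\G_m \sm 1$, and in coherently tracking the $\A^1$-deformation retractions $Y \to p_0$ and $Z \to \GL_{n-1}\times\G_m$ alongside the chosen trivializations of the rank-$(n{-}1)$ normal bundles, so that the naturality square of the purity equivalence \eqref{eq:NatOfPurity} really does produce the claimed map on the $T_1$-summand.
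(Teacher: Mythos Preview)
Your overall strategy coincides with the paper's: use naturality of purity and compute $f_n'|_{T_1}\colon T_1 \to Z$ explicitly. Your matrix computation in the second step is correct and is exactly what the paper invokes at the end. The gap is in your first step, where you assert that naturality of purity identifies $\psi''$ with the displayed composite through $\Th(N_{T_1})$. The problem is that $f_n'$ restricted to $\tilde{\P}^{n-1} \times (\G_m \sm 1)$ has $f_n'^{-1}(Z) = T = T_1 \sqcup T_2$, \emph{not} $T_1$ alone. Consequently $f_n'$ does not carry the open complement of $T_1$ into the open complement of $Z$: points of $T_2$ lie in the complement of $T_1$ but map into $Z$. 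So naturality of purity, applied to $(\tilde{\P}^{n-1} \times (\G_m \sm 1), T) \to (\GL_n, Z)$, yields only a map from $\Th(N_{T_1}) \vee \Th(N_{T_2})$ to $\Th(N_Z)$, and there is no a priori reason why $\psi''$---defined as the map on cofibres of $\tilde\imath \times \id$, whose cofibre is $\Th(N_{T_1})$---should agree with the restriction of that map to the first wedge summand followed by the collapse.

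The paper closes this gap via the cube \eqref{eq:BigPurity}: a back face is introduced whose top row is the purity sequence for $T_2$ inside the complement of $T_1$, and the back vertical maps (marked $\dagger$) are constructed only up to homotopy using the $\A^1$-equivalence $\GL_{n-1} \times (\A^{n-1}\sm \bar{0}) \hookrightarrow \GL_n \sm Z$ of \eqref{eq:CompZEq}. Once that is in place, the top and bottom faces of the reduced cube \eqref{eq:BigPurity2} are pushouts, so $\psi''$ (the induced map on cofibres of the left face) equals the induced map on cofibres of the right face \eqref{eq:ThomSpaces}; taking that cofibre kills the $\Th(N_{T_2})$-summand and isolates the $T_1$-contribution you want. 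Your final paragraph correctly senses that some bookkeeping is required, but the specific missing ingredient is this disposal of the extra component $T_2$ via the auxiliary $\A^1$-equivalence---not merely the tracking of the retractions $Y\to p_0$ and $Z\to \GL_{n-1}\times\G_m$.
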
 
\begin{proof}
  We consider the diagram
  \begin{equation}\label{eq:BigPurity}
    \adjustbox{scale=0.85}{
    \begin{tikzcd}
      &[-12ex] (\tilde{\P}^{n-1} \times (\G_m \sm 1)) \ar[dl,equals]\sm T \ar[dd,"\dagger",near end] \ar[rr] &[-10ex] &[-12ex] (\tilde{\P}^{n-1} \times (\G_m \sm 1)) \sm T_1 \ar[rr] \ar[dl,hook] \ar[dd,"\dagger", near end] &[-13ex] &[-19ex]  \Th(N_{T_2}) \ar[dd] \ar[dl,"\inc_2"] \\
      (\tilde{\P}^{n-1} \times (\G_m \sm 1)) \sm T \ar[dd] \ar[rr,crossing over] & & \tilde{\P}^{n-1} \times (\G_m \sm 1) \ar[rr,crossing over] & & \Sigma^{2n-2,n-1}(p_0 \times (\G_m \sm 1))_+ \vee \Th(N_{T_2}) & \\
      & \GL_{n-1} \times (\A^{n-1} \sm \bar{0}) \ar[dl,hook,"\weq"] \ar[rr] & & \GL_{n-1} \times \A^{n-1} \ar[dl] \ar[rr] &&  \Sigma^{2n-2,n-1} (\GL_{n-1} \times \Spec k)_+ \ar[dl,"\Sigma^{2n-2,n-1}(\id \times 1)_+"] \\
      \GL_n \sm Z \ar[rr] & & \GL_n \ar[rr] \arrow[from=2-3, to=4-3, crossing over] && \Sigma^{2n-2,n-1}(\GL_{n-1} \times \G_m)_+ \arrow[from=2-5, to=4-5, crossing over] &
    \end{tikzcd}
    }
  \end{equation}
  where the bottom face is \eqref{eq:Purity1} (when $r=n$), the rows are $\A^1$-cofibre sequences arising from purity, the vertical maps are all induced by $f_n'$, and the maps marked with $\dagger$ only exist up to homotopy.
  
  %: the map 
  %\[ f_n': (\tilde{\P}^{n-1} \times (\G_m \sm 1)) \sm X \to \GL_n \sm Z \]
  %factors (up to homotopy) through $\GL_{n-1} \times \A^{n-1} \sm \bar{0}$ since the inclusion $\GL_{n-1} \times \A^{n-1} \sm \bar{0} \hookrightarrow \GL_n \sm Z$ is an $\A^1$-equivalence, and  
  %\[
  %  f_n': (\tilde{\P}^{n-1} \times (\G_m \sm 1)) \sm X_1 \to \GL_n
  %\]
  %factors (up to homotopy) through $\GL_{n-1} \times \A^n \homeq \GL_{n-1}$ since the diagram
  %\[
  %  \begin{tikzcd}
  %    \tilde{\P}^{n-2} \times (\G_m \sm 1) \dar["f_{n-1}'"] \rar["\tilde{\imath} \times \id","\weq"'] & (\tilde{\P}^{n-1} \times (\G_m \sm 1)) \sm X_1 = (\tilde{\P}^{n-1} \sm Y) \times (\G_m \sm 1) \dar["f_n'"] \\
  %    \GL_{n-1} \rar["i"] & \GL_n
  %  \end{tikzcd}
  %\]
  %commutes, with the top horizontal map being an $\A^1$-equivalence. As a consequence, the rightmost map marked with $\dagger$ in \eqref{eq:BigPurity} is, up to homotopy, given by $f_{n-1}'$.
  
  The inclusion 
  \[  
    \tilde{\P}^{n-2} \times (\G_m \sm 1) \hookrightarrow (\tilde{\P}^{n-1} \times (\G_m \sm 1)) \sm T_1 = (\tilde{\P}^{n-1} \sm Y) \times (\G_m \sm 1)
  \]
  is an $\A^1$-equivalence, so the rightmost cube of \eqref{eq:BigPurity} is equivalent to 
  \begin{equation}\label{eq:BigPurity2}
    \begin{tikzcd}[column sep = -4ex]
        & \tilde{\P}^{n-2} \times (\G_m \sm 1) \ar[dd,"f_{n-1}'",near end] \ar[dl,"\tilde{\imath} \times \id"] \ar[rr] &&[-10ex] \Th(N_{T_2}) \ar[dl,"\inc_2"] \ar[dd] \\
        \tilde{\P}^{n-1} \times (\G_m \sm 1) \ar[dd,"f_n'"] \ar[rr,crossing over] && \Sigma^{2n-2,n-1}(p_0 \times (\G_m \sm 1))_+ \vee \Th(N_{T_2}) & \\
        & \GL_{n-1} \ar[dl,"i"] \ar[rr] && \Sigma^{2n-2,n-1} (\GL_{n-1} \times \Spec k)_+ \ar[dl,"\Sigma^{2n-2,n-1}(\id \times 1)_+"] \\
        \GL_n \ar[rr] && \Sigma^{2n-2,n-1} (\GL_{n-1} \times \G_m)_+ \ar[from=2-3,to=4-3,crossing over] & 
    \end{tikzcd}
  \end{equation}
  The top and bottom squares in \eqref{eq:BigPurity2} are pushout squares (after $\A^1$-localization) since the equality and the inclusion $\GL_{n-1} \times (\A^{n-1} \sm \bar{0}) \hookrightarrow \GL_n \sm Z$ in \eqref{eq:BigPurity} are $\A^1$-equivalences.

  The map 
  \[
    \psi'': \Sigma^{2n-2,n-1} (p_0 \times (\G_m \sm 1))_+ \to \Sigma^{2n-2,n-1} (\GL_{n-1})_+ \wedge \G_m.
  \]
  is thus equivalent to the map induced by taking $\A^1$-cofibres of the horizontal maps in
  \begin{equation}\label{eq:ThomSpaces}
    \begin{tikzcd}[column sep = 10ex]
      \Th(N_{T_2}) \rar["\inc_2"] \dar & \Sigma^{2n-2,n-1}(p_0 \times (\G_m \sm 1))_+ \vee \Th(N_{T_2}) \dar \\
      \Sigma^{2n-2,n-1}(\GL_{n-1} \times \Spec k)_+ \rar["\Sigma^{2n-2,n-1}(\id \times 1)_+"] & \Sigma^{2n-2,n-1}(\GL_{n-1} \times \G_m)_+
    \end{tikzcd}
  \end{equation}
  (the right-hand face of \eqref{eq:BigPurity2}). The right vertical map of \eqref{eq:ThomSpaces} is, on the first factor, given by $\Sigma^{2n-2,n-1}(-)_+$ applied to
  \begin{equation}\label{eq:MapOnp0}
    p_0 \times (\G_m \sm 1) \hookrightarrow p_0 \times \G_m \xrar{I_{n-1} \times \id} \GL_{n-1} \times \G_m.
  \end{equation}
  This follows from the naturality of purity and the fact that  
  \[
    f_n': X_1=Y \times (\G_m \sm 1) \to Z
  \]
  is, up to homotopy, given by \eqref{eq:MapOnp0}. Assembling these facts, we obtain Lemma \ref{lem:Psi''} by taking $\A^1$-cofibres of the horizontal maps in \eqref{eq:ThomSpaces}.
\end{proof}

\printbibliography

\end{document}